\theoremstyle{plain}
\newtheorem{theorem}{Theorem}[section]
\newtheorem{lemma}[theorem]{Lemma}
\newtheorem{proposition}[theorem]{Proposition}
\theoremstyle{definition}
\newtheorem{definition}[theorem]{Definition}
\newtheorem{corollary}[theorem]{Corollary}
\newtheorem{example}{\sc Example}
\theoremstyle{remark}
\newtheorem{remark}{\sc Remark}
\date{}
\title{\bf Atanassov's Intuitionistic Fuzzy Ideals of $\Gamma$-Semigroups}\vspace{.25 in}
\author{ {\bf Sujit Kumar Sardar$^1$,} {\bf Samit Kumar Majumder$^2$}\\
and\\
{\bf Manasi Mandal$^3$}\\
Department of Mathematics, Jadavpur\\
University, Kolkata-700032, INDIA\\
{\tt $^1$sksardarjumath@gmail.com}\\
{\tt $^2$samitfuzzy@gmail.com}\\
{\tt $^3$manasi$_{-}$ju@yahoo.in}
 }
\begin{document}
\maketitle

\begin{abstract}

The notion of intuitionistic fuzzy set was introduced by Atanassov as a generalization of the notion of fuzzy set. In this paper we apply this concept of Atanassov to ideals, prime ideals and semiprime ideals of $\Gamma$-semigroups in order to obtain some characterization theorems. We also introduce the notion of  Atanassov's intuitionistic fuzzy ideal extension in a $\Gamma$-semigroup and investigate some of their important properties. A regular $\Gamma$-semigroup has been characterized in terms of Atanasov's intutionistic fuzzy ideal. Characterization of prime ideal of a $\Gamma$-semigroup has also been obtained in terms of Atanassov's intutionistic fuzzy ideal extension. \\

\textbf{AMS Mathematics Subject Classification[2000]:}\textit{\ }20N20

\textbf{Key Words and Phrases:}\textit{\ }$\Gamma$-semigroup, Intuitionistic fuzzy subset, Intuitionistic fuzzy ideal, Intuitionistic fuzzy prime$($semiprime$)$ ideal, Intutionistic fuzzy ideal extension.
\end{abstract}

\section{Introduction}
A semigroup is an algebraic structure consisting of a non-empty set $S$ together with an associative binary operation\cite{H}. The formal study of semigroups began in the early 20th century. Semigroups are important in many areas of mathematics, for example, coding and language theory, automata theory, combinatorics and mathematical analysis. In 1981 M.K. Sen\cite{Sen} introduced the notion of $\Gamma$-semigroup as a generalization of semigroup and ternary semigroup. We call this $\Gamma$-semigroup a {\it both sided $\Gamma$-semigroup.} In 1986 M.K. Sen and N.K. Saha\cite{SS}
modified the definition of Sen's $\Gamma$-semigroup. This newly defined $\Gamma$-semigroup is known as {\it one sided $\Gamma$-semigroup.} $\Gamma$-semigroups have been analyzed by a lot of mathematicians, for instance by Chattopadhyay\cite{Ch2}, Dutta and  Adhikari\cite{A,D1,D2,D3}, Hila\cite {H1,H2}, Chinram\cite{Chin}, Sen et al.\cite{SC,SSe}. T.K. Dutta and N.C. Adhikari\cite{A,D1} mostly worked on {\it both sided $\Gamma$-semigroups.} They defined operator semigroups of such type of $\Gamma$-semigroups and established many results and obtained many correspondences between a $\Gamma$-semigroup and its operator semigroups. In this paper we have considered {\it both sided $\Gamma$-semigroups.}\\
\indent After the introduction of fuzzy set by Zadeh\cite{Z}, reconsideration of the concept of classical mathematics began. On the other hand, because of the importance of group theory in mathematics, as well as its many areas of application, the notion of fuzzy subgroups was defined by Rosenfeld\cite{R} and its structure was investigated. Das characterized fuzzy subgroups by their level subgroups in \cite{Das}. Nobuaki Kuroki\cite{K1,K2,K3} is the pioneer of fuzzy ideal theory of semigroups. The idea of fuzzy subsemigroup was also introduced by Kuroki$\cite{K1,K3}$. In \cite{K2}, Kuroki characterized several classes of semigroups in terms of fuzzy left, fuzzy right and fuzzy bi-ideals. Others who worked on fuzzy semigroup theory, such as X.Y. Xie\cite{X2}, Y.B. Jun\cite{J1}, are mentioned in the bibliography.\\
\indent In $2007,$ Uckun Mustafa, Ali Mehmet and Jun Young Bae\cite{U} introduced the notion of intuitionistic fuzzy ideals in $\Gamma$-semigroups. Motivated by Kuroki\cite{K1,K2,K3}, Mustafa et al.\cite{U}, S.K. Sardar et al.\cite{S1,S2,S3} have initiated the study of $\Gamma$-semigroups in terms of fuzzy sets. The purpose of this paper is as mentioned in the abstract.

\section{Preliminaries}

After the introduction of fuzzy sets by Zadeh\cite{Z}, several researches were conducted on the generalization of fuzzy sets. As an important generalization of the notion of fuzzy sets on a non-empty set $X,$ Atanassov introduced in \cite{A1,A2} the concept of $IFS(X)$\footnote{$IFS(X)$ denote the intuitionistic fuzzy sets defined on a non-empty set $X.$} as objects having the form\\
$$A=\{<x,\mu_{A}(x),\nu_{A}(x)>:x\in X\},$$
where the functions $\mu_{A}: X\rightarrow [0,1]$ and $\nu_{A}: X\rightarrow [0,1]$ denote the degree of membership and the degree of non-membership of each element $x\in X$ to the set $A$ respectively, and $0\leq \mu_{A}(x)+\nu_{A}(x)\leq 1$ for all $x\in X.$\\

\indent Let $A$ and $B$ be two $IFS(X).$ Then the following expressions are defined in \cite{A1,A2}.\\

\indent $(1)$ $A\subseteq B$ if and only if $\mu_{A}(x)\leq\mu_{B}(x)$ and $\nu_{A}(x)\geq\nu_{B}(x),$\\

\indent $(2)$ $A=B$ if and only if $A\subseteq B$ and $B\subseteq A,$\\

\indent $(3)$ $A^{C}=\{<x,\nu_{A}(x),\mu_{A}(x)>:x\in X\},$\\

\indent $(4)$ $A\cap B=\{<x,\min\{\mu_{A}(x),\mu_{B}(x)\},\max\{\nu_{A}(x),\nu_{B}(x)\}>:x\in X\},$\\

\indent $(5)$ $A\cup B=\{<x,\max\{\mu_{A}(x),\mu_{B}(x)\},\min\{\nu_{A}(x),\nu_{B}(x)\}>:x\in X\},$\\

\indent $(6)$ $\square A=\{<x,\mu_{A}(x),1-\mu_{A}(x)>:x\in X\},$\\

\indent $(7)$ $\lozenge A=\{<x,1-\nu_{A}(x),\nu_{A}(x)>:x\in X\}.$\\

For a non-empty family of $IFS(X),$ $A_{i}=(\mu_{A_{i}},\nu_{A_{i}})_{i\in I}$ we define $\underset{i\in I}{\inf}$ $A_{i}=(\underset{i\in I}{\inf}$ $\mu_{A_{i}},\underset{i\in I}{\sup}$ $\nu_{A_{i}})$ and $\underset{i\in I}{\sup}$ $A_{i}=(\underset{i\in I }{\sup}$ $\mu_{A_{i}},\underset{i\in I}{\inf}$ $\nu_{A_{i}})$ are also the $IFS(X),$ given as $:$ $\underset{i\in I}{\inf}$ $A_{i}:X\rightarrow [0,1],x\rightarrow (\underset{i\in I}{\inf}$ $\mu_{A_{i}}(x),\underset{i\in I}{\sup}$ $\nu_{A_{i}}(x))$ and $\underset{i\in I}{\sup}$ $A_{i}:X\rightarrow [0,1],x\rightarrow (\underset{i\in I}{\sup}$ $\mu_{A_{i}}(x),\underset{i\in I}{\inf}$ $\nu_{A_{i}}(x)).$\\

For the sake of simplicity, we shall use the symbol $A=(\mu_{A},\nu_{A})$ for the $IFS(X),$ $A=\{<x,\mu_{A}(x),\nu_{A}(x)>:x\in X\}.$\\

Now we shall discuss some elementary concepts of $\Gamma$-semigroup theory which will be required in the sequel.\\

Let $S$ and $\Gamma$ be two non-empty sets. $S$ is called a $\Gamma$-semigroup\cite{D1} if there exist mappings from $S\times\Gamma\times S$ to $S,$ written as $(a,\alpha,b)\longrightarrow a\alpha b,$ and from $\Gamma\times S\times\Gamma$ to $\Gamma,$ written as $(\alpha,a,\beta)\longrightarrow\alpha a\beta$ satisfying the following associative laws $(a\alpha b)\beta c=a(\alpha b\beta)c=a\alpha(b\beta c)$ and $\alpha(a\beta b)\gamma=(\alpha a\beta)b\gamma=\alpha a(\beta b\gamma)$ for all $a,b,c\in S$ and for all $\alpha,\beta,\gamma\in\Gamma.$\\
\indent Let $S$ be the set of all integers of the form $4n+1$ and $\Gamma$ be the set of all integers of the form $4n+3$ where $n$ is an integer. If $a\alpha b$ is $a+\alpha+b$ and $\alpha a\beta$ is $\alpha
+a+\beta($usual sum of integers$)$ for all $a,b\in S$ and for all $\alpha,\beta\in\Gamma,$ then $S$ is a $\Gamma$-semigroup\cite{D1}.\\
\indent A $LI(S)(RI(S))$\cite{D1}\footnote{$LI(S)$, $RI(S)$, $I(S),$ $PI(S),$ $SPI(S)$ respectively denote the left ideal(s), right ideal(s), ideal(s), prime ideal(s), semiprime ideal(s) of a $\Gamma$-semigroup $S.$} is a non-empty subset $I$ of $S$ such that $S\Gamma I \subseteq I$ ($I \Gamma S \subseteq I$).
If $I$ is both a $LI(S)$ and a $RI(S),$ then we say that $I$ is an $I(S)$\cite{D1}.\\
Let $S$ be a $\Gamma$-semigroup. An $I(S),$ $P$ is said to be $PI(S)$\cite{D2} if, for any two $I(S),$ $A$ and $B,$ $A\Gamma B\subseteq P$ implies that $A\subseteq P$ or $B\subseteq P.$\\
Let $S$ be a $\Gamma$-semigroup. An $I(S),$ $Q$ is said to be $SPI(S)$\cite{D2} if, for any $I(S),$ $A,$ $A\Gamma A\subseteq Q$ implies that $A\subseteq Q.$\\
Now we recall the following theorem.
\begin{theorem}
$\cite{S2,S3}$ Let $I$ be an $I(S).$ Then the following are equivalent:\\

$(1)$ $I$ is $PI(S)(SPI(S)),$\\

$(2)$ $x\Gamma S\Gamma y\subseteq I\Rightarrow x\in I$ or $y\in I($resp. $x\Gamma S\Gamma x\subseteq I\Rightarrow x\in I),$\\

$(3)$ $x\Gamma y\subseteq I\Rightarrow x\in I$ or $y\in I($resp. $x\Gamma x\subseteq I\Rightarrow x\in I).$
\end{theorem}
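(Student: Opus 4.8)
The plan is to prove the prime and the semiprime statements together through the cycle $(1)\Rightarrow(3)\Rightarrow(2)\Rightarrow(1)$, since the semiprime assertions are just the diagonal case $y=x$ of the prime ones. Everything hinges on one collapsing identity special to a (both sided) $\Gamma$-semigroup: by the associative law $(a\alpha b)\beta c=a(\alpha b\beta)c$, for all $x,y,s\in S$ and $\alpha,\beta\in\Gamma$ one has $x\alpha s\beta y=x(\alpha s\beta)y$ with $\alpha s\beta\in\Gamma$, so that
$$x\Gamma S\Gamma y\subseteq x\Gamma y.$$
Hence the hypothesis of $(3)$ is genuinely stronger than that of $(2)$, and this inclusion is the engine behind every step.

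For $(2)\Rightarrow(1)$ I would argue straight from the definitions. Let $A,B$ be $I(S)$ with $A\Gamma B\subseteq I$, take $x\in A$, $y\in B$; since $A$ is a right ideal, $x\Gamma S\subseteq A$, so $x\Gamma S\Gamma y\subseteq A\Gamma B\subseteq I$, and $(2)$ yields $x\in I$ or $y\in I$. Letting $x$ range over $A$ and $y$ over $B$ gives $A\subseteq I$ or $B\subseteq I$, i.e. $I$ is $PI(S)$; the semiprime case is the specialization $B=A$.

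For $(3)\Rightarrow(2)$ I would use a short absorption argument. In the prime case, assume $x\Gamma S\Gamma y\subseteq I$ with $y\notin I$; then for each $\alpha\in\Gamma,s\in S$ we have $(x\alpha s)\Gamma y\subseteq x\Gamma S\Gamma y\subseteq I$, so $(3)$ forces $x\alpha s\in I$ and hence $x\Gamma S\subseteq I$; picking any $s\in S$, $x\Gamma s\subseteq I$ gives $x\in I$ or $s\in I$, and if $x\notin I$ then $S\subseteq I$, so $I=S\ni x$, a contradiction, whence $x\in I$. In the semiprime case, from $x\Gamma S\Gamma x\subseteq I$ one checks $(x\alpha s)\beta(x\alpha s)=(x\alpha s\beta x)\alpha s\in (x\Gamma S\Gamma x)\Gamma S\subseteq I$, so $(3)$ gives $x\alpha s\in I$, i.e. $x\Gamma S\subseteq I$; as $x\in S$ this yields $x\Gamma x\subseteq x\Gamma S\subseteq I$, and $(3)$ finally gives $x\in I$.

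The implication $(1)\Rightarrow(3)$ is where the real work lies. I would form the principal ideals $\langle x\rangle=\{x\}\cup x\Gamma S\cup S\Gamma x\cup S\Gamma x\Gamma S$ and $\langle y\rangle$ analogously, and show that $x\Gamma y\subseteq I$ implies $\langle x\rangle\Gamma\langle y\rangle\subseteq I$; primeness of $I$ then gives $\langle x\rangle\subseteq I$ or $\langle y\rangle\subseteq I$, i.e. $x\in I$ or $y\in I$ (and the semiprime case is $y=x$). A typical product $u\gamma v$ with $u\in\langle x\rangle,v\in\langle y\rangle$ has a core of the form $x(\cdots)y$ whose interior is an alternating string of elements of $\Gamma$ and $S$; by the collapsing identity this core reduces to an element of $x\Gamma y\subseteq I$, while the outer factors coming from the $S\Gamma$-prefix and $\Gamma S$-suffix are absorbed because $I$ is a two-sided ideal. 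Checking that all the combinations reduce in this uniform way is the main obstacle, but it is entirely routine once the inclusion $x\Gamma S\Gamma y\subseteq x\Gamma y$ is available; indeed that inclusion is exactly what prevents an interior occurrence of $S$ from escaping $I$.
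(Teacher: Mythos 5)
There is nothing in the paper to compare you against: Theorem 2.1 is stated with the citation \cite{S2,S3} and used as a recalled fact, so the paper gives no proof of it. Judged on its own terms, your argument is correct. The collapsing identity $x\alpha s\beta y=x(\alpha s\beta)y$, hence $x\Gamma S\Gamma y\subseteq x\Gamma y$, is legitimate precisely because the paper works with \emph{both-sided} $\Gamma$-semigroups, where the second structure map $\Gamma\times S\times\Gamma\rightarrow\Gamma$ and the stated associativity laws exist; you are right that this is the crux, and it is also why the equivalence of $(2)$ and $(3)$ is not available verbatim in the one-sided (Sen--Saha) setting.

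On the individual steps: your $(2)\Rightarrow(1)$ is the standard contrapositive passage from the pairwise disjunction ($x\in I$ or $y\in I$ for every pair) to the setwise one ($A\subseteq I$ or $B\subseteq I$), and it specializes correctly to the semiprime case with $B=A$. Your $(3)\Rightarrow(2)$ absorption argument is valid; you could end it faster by taking $s=x$, since $x\Gamma x\subseteq x\Gamma S\subseteq I$ and $(3)$ then give $x\in I$ at once, avoiding the excursion through $S\subseteq I$. For $(1)\Rightarrow(3)$, the set $\langle x\rangle=\{x\}\cup x\Gamma S\cup S\Gamma x\cup S\Gamma x\Gamma S$ is genuinely an ideal (again by collapsing, e.g.\ $S\Gamma S\Gamma x\subseteq S\Gamma x$ and $x\Gamma S\Gamma S\subseteq x\Gamma S$), and the sixteen product types in $\langle x\rangle\Gamma\langle y\rangle$ do all reduce as you predict: the interior alternating string collapses the core into $x\Gamma y\subseteq I$, and the surviving outer factors have the form $S\Gamma(\cdot)$ or $(\cdot)\Gamma S$, absorbed because $I$ is a two-sided ideal. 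That case check is the only part you left schematic; it is routine as claimed, but a written-out version should display at least the extreme case $(s\alpha x\beta t)\gamma(p\delta y\epsilon q)=s\alpha\bigl(x(\beta(t\gamma p)\delta)y\bigr)\epsilon q\in S\Gamma(x\Gamma y)\Gamma S\subseteq I$, since it exhibits the whole mechanism in one line.
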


\section{Intuitionstic Fuzzy Ideals}

Unless or otherwise stated throughout this paper $S$ stands for a both sided $\Gamma$-semigroup.\\

\begin{definition}
\cite{U} A non-empty $IFS,$ $A=(\mu_{A},\nu_{A})$\ of $S$\ is called an $IFLI(S)$\footnote{$IFS(S)$, $IFLI(S)$, $IFRI(S)$, $IFI(S)$ denote respectively the intuitionistic fuzzy subset(s), intuitionistic fuzzy left ideal(s), intuitionistic fuzzy right ideal(s), intuitionistic fuzzy ideal(s) of a $\Gamma$-semigroup $S.$} if it satisfies:\\

\indent $(1)$ $\mu_{A} (x\gamma y)\geq \mu_{A} (y)\ {\rm for \ all} \  x,y\in S \ {\rm and} \ {\rm for \ all}\ \gamma \in \Gamma,$\\

\indent $(2)$ $\nu_{A} (x\gamma y)\leq \nu_{A} (y) \ {\rm for \ all} \  x,y\in S \ {\rm and} \ {\rm for \ all}\ \gamma \in \Gamma.$
\end{definition}

\begin{definition}
\cite{U} A non-empty $IFS,$ $A=(\mu_{A},\nu_{A})$\ of $S$\ is called an $IFRI(S)$ if it satisfies:\\

\indent $(1)$ $\mu_{A} (x\gamma y)\geq \mu_{A} (x)\ {\rm for \ all} \  x,y\in S \ {\rm and} \ {\rm for \ all}\ \gamma \in \Gamma,$\\

\indent $(2)$ $\nu_{A} (x\gamma y)\leq \nu_{A} (x) \ {\rm for \ all} \  x,y\in S \ {\rm and} \ {\rm for \ all}\ \gamma \in \Gamma.$
\end{definition}

\begin{definition}
\cite{U} A non-empty $IFS,$ $A=(\mu_{A},\nu_{A})$\ of $S$\ is called an $IFI(S)$ if it is an $IFLI(S)$ and $IFRI(S).$
\end{definition}

\begin{example} Let $S$ be the set of all non-positive integers and $\Gamma $ be the set of all non-positive even integers. Then $S$ is a $\Gamma $-semigroup where $a\gamma b$ denote the usual multiplication of integers $a,\gamma ,b$  with $a,b\in S$ and $\gamma \in \Gamma .$ Let $A=(\mu_{A},\nu_{A}) $ be an $IFS$ of $S,$ defined
as follows
$$
\mu_{A} (x)=\left\{
\begin{array}{ll}
1 & \text{if} \ x=0 \\
0.1 & \text{if} \ x=-1,-2 \\
0.2& \text{if} \ x<-2
\end{array}
\right.
$$
$$
\nu_{A} (x)=\left\{
\begin{array}{ll}
0 & \text{if} \ x=0 \\
0.6 & \text{if} \ x=-1,-2 \\
0.7 & \text{if} \ x<-2
\end{array}
\right. .
$$
Then the $IFS,$ $A=(\mu_{A},\nu_{A}) $ of $S$ is an $IFI(S).$
\end{example}

For all the results formulated in this paper, we only describe proof for the $IFLI.$ For $IFRI$ similar results hold as well.

By routine verification we obtain the following proposition and subsequent lemmas.

\begin{proposition}
If $\{A_{i}\}_{i\in \Lambda}$ is a family of $IFLI(S)(IFRI(S),IFI(S)),$ then $\underset{i\in\Lambda}\bigcap A_{i}$ is an $IFLI(S)(IFRI(S),IFI(S)).$
\end{proposition}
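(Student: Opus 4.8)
The plan is to reduce everything to the elementary order properties of infima and suprema. Write $A=\underset{i\in\Lambda}{\bigcap}A_{i}=(\mu_{A},\nu_{A})$, where by the definition of intersection for a family of $IFS(S)$ we have $\mu_{A}=\underset{i\in\Lambda}{\inf}\,\mu_{A_{i}}$ and $\nu_{A}=\underset{i\in\Lambda}{\sup}\,\nu_{A_{i}}$. I would first check that $A$ is a genuine $IFS(S)$, i.e. that $\mu_{A}(x)+\nu_{A}(x)\leq 1$ for every $x\in S$: given $\varepsilon>0$ pick $j\in\Lambda$ with $\nu_{A_{j}}(x)>\nu_{A}(x)-\varepsilon$; then $\mu_{A}(x)\leq\mu_{A_{j}}(x)\leq 1-\nu_{A_{j}}(x)<1-\nu_{A}(x)+\varepsilon$, and letting $\varepsilon\to 0$ gives the claim.

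For the left-ideal conditions, fix $x,y\in S$ and $\gamma\in\Gamma$. Since each $A_{i}$ is an $IFLI(S)$ we have $\mu_{A_{i}}(x\gamma y)\geq\mu_{A_{i}}(y)$ and $\nu_{A_{i}}(x\gamma y)\leq\nu_{A_{i}}(y)$ for every $i\in\Lambda$. Bounding the right-hand sides by $\mu_{A}(y)=\underset{i\in\Lambda}{\inf}\,\mu_{A_{i}}(y)\leq\mu_{A_{i}}(y)$ and $\nu_{A}(y)=\underset{i\in\Lambda}{\sup}\,\nu_{A_{i}}(y)\geq\nu_{A_{i}}(y)$ respectively, I obtain $\mu_{A_{i}}(x\gamma y)\geq\mu_{A}(y)$ and $\nu_{A_{i}}(x\gamma y)\leq\nu_{A}(y)$ for all $i$. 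Taking the infimum over $i$ in the first inequality and the supremum over $i$ in the second then yields $\mu_{A}(x\gamma y)=\underset{i\in\Lambda}{\inf}\,\mu_{A_{i}}(x\gamma y)\geq\mu_{A}(y)$ and $\nu_{A}(x\gamma y)=\underset{i\in\Lambda}{\sup}\,\nu_{A_{i}}(x\gamma y)\leq\nu_{A}(y)$, which are precisely conditions $(1)$ and $(2)$ of Definition~3.1 for $IFLI(S)$.

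The $IFRI(S)$ case is identical after replacing $y$ by $x$ on the right-hand side in the defining inequalities, and the $IFI(S)$ case then follows by combining the two, since $A$ is simultaneously an $IFLI(S)$ and an $IFRI(S)$. There is no serious obstacle here; the only point requiring any care is that $\Lambda$ may be infinite, so I deliberately use the defining bounding property of the infimum and supremum (every member dominates the infimum and is dominated by the supremum) rather than a minimum or maximum that need not be attained. This same subtlety is what forces the $\varepsilon$-argument in the verification that $A$ remains an $IFS(S)$, which is the one step I would write out explicitly rather than dismiss as routine.
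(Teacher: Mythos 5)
Your proof is correct. The paper gives no proof of this proposition at all---it is dismissed with ``By routine verification we obtain the following proposition and subsequent lemmas''---and your argument is exactly the routine verification intended: the defining inequalities of each $A_{i}$ combined with the elementary order properties of $\inf$ and $\sup$, with the additional (and welcome) care of checking via the $\varepsilon$-argument that $\mu_{A}(x)+\nu_{A}(x)\leq 1$, so that the intersection is still a genuine $IFS(S)$, a point the paper's statement silently takes for granted.
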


\begin{lemma}
If $A=(\mu_{A},\nu_{A})$ is an $IFLI(S)(IFRI(S),IFI(S)),$ then so is $\square A=(\mu_{A},\mu^{c}_{A}).$
\end{lemma}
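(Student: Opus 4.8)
The plan is to check the two defining inequalities of an $IFLI(S)$ directly for $\square A=(\mu_{A},\mu^{c}_{A})$, where $\mu^{c}_{A}(x)=1-\mu_{A}(x)$. First I would confirm that $\square A$ is a legitimate $IFS$: since $\mu_{A}(x)+\mu^{c}_{A}(x)=\mu_{A}(x)+(1-\mu_{A}(x))=1$ for every $x\in S$, the admissibility condition $0\leq\mu_{A}(x)+\mu^{c}_{A}(x)\leq 1$ holds (with equality), and both component functions map $S$ into $[0,1]$ because $\mu_{A}$ does.

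For the membership condition, I would observe that the first component of $\square A$ is literally $\mu_{A}$, so the inequality $\mu_{A}(x\gamma y)\geq\mu_{A}(y)$ required of $\square A$ is exactly condition $(1)$ in the definition of $IFLI(S)$ applied to $A$, which holds by hypothesis for all $x,y\in S$ and all $\gamma\in\Gamma$.

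For the non-membership condition I would unwind the definition of $\mu^{c}_{A}$: the requirement $\mu^{c}_{A}(x\gamma y)\leq\mu^{c}_{A}(y)$ reads $1-\mu_{A}(x\gamma y)\leq 1-\mu_{A}(y)$, which rearranges to $\mu_{A}(x\gamma y)\geq\mu_{A}(y)$, again precisely condition $(1)$ for $A$. Thus both defining inequalities of $\square A$ collapse onto the single hypothesis that $A$ satisfies the membership condition of an $IFLI(S)$.

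There is no genuine obstacle here: the entire content is that the non-membership part of $\square A$ is the pointwise complement of its membership part, so the second inequality is logically equivalent to the first and condition $(2)$ for $A$ is never even invoked. The $IFRI(S)$ and $IFI(S)$ cases follow by the identical computation with $\mu_{A}(x)$ in place of $\mu_{A}(y)$ (respectively both), in accordance with the convention stated just before the proposition.
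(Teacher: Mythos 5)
Your proof is correct and coincides with what the paper intends: the paper dismisses this lemma as following \emph{by routine verification}, and your direct check---that the membership condition for $\square A$ is literally condition $(1)$ for $A$, while the non-membership condition $\mu^{c}_{A}(x\gamma y)\leq\mu^{c}_{A}(y)$ unwinds to the same inequality $\mu_{A}(x\gamma y)\geq\mu_{A}(y)$---is precisely that routine verification, carried out correctly for the $IFLI(S)$ case with the $IFRI(S)$ and $IFI(S)$ cases handled analogously. Your observation that condition $(2)$ of $A$ is never invoked is accurate and slightly sharper than anything the paper records.
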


\begin{lemma}
If $A=(\mu_{A},\nu_{A})$ is an $IFLI(S)(IFRI(S),IFI(S)),$ then so is $\lozenge A=
(\nu^{c}_{A},\nu_{A}).$
\end{lemma}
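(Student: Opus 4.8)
The plan is to unwind the definition $\lozenge A=(\nu^{c}_{A},\nu_{A})$, where $\nu^{c}_{A}(x)=1-\nu_{A}(x)$, and then check the two defining inequalities of an $IFLI(S)$ for $\lozenge A$ directly, treating $\nu^{c}_{A}$ as its membership function and $\nu_{A}$ as its non-membership function. I expect both inequalities to collapse onto a single hypothesis, namely condition $(2)$ of $A$ being an $IFLI(S)$, so that essentially no computation beyond a sign-flip is required.

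First I would verify that $\lozenge A$ is a legitimate $IFS(S)$: since $\nu_{A}(x)\in[0,1]$ we have $\nu^{c}_{A}(x)=1-\nu_{A}(x)\in[0,1]$, and their sum is $\nu^{c}_{A}(x)+\nu_{A}(x)=1$, so the constraint $0\leq\mu+\nu\leq 1$ holds with equality. Thus $\lozenge A$ is a non-empty $IFS(S)$ and it makes sense to ask whether it is an $IFLI(S)$.

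For the membership condition I would fix $x,y\in S$ and $\gamma\in\Gamma$ and observe that the requirement $\nu^{c}_{A}(x\gamma y)\geq\nu^{c}_{A}(y)$ is, after subtracting each side from $1$, equivalent to $\nu_{A}(x\gamma y)\leq\nu_{A}(y)$, which is precisely hypothesis $(2)$ for the $IFLI(S)$ $A$. For the non-membership condition I would note that the required inequality $\nu_{A}(x\gamma y)\leq\nu_{A}(y)$ is literally the same hypothesis $(2)$. Hence both defining conditions of $\lozenge A$ hold, and $\lozenge A$ is an $IFLI(S)$.

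The only point demanding care — and it is the single place to watch rather than a genuine obstacle — is the reversal of the inequality when passing through the complement $1-(\cdot)$: the ``$\geq$'' demanded on the membership side of $\lozenge A$ corresponds exactly to the ``$\leq$'' that $\nu_{A}$ already satisfies, so no information about $\mu_{A}$ is ever used. The cases $IFRI(S)$ and $IFI(S)$ follow by the same argument applied to the analogous hypotheses (replacing the right-hand argument $y$ by $x$ for the $IFRI(S)$, and combining both for the $IFI(S)$), in accordance with the convention stated just before the proposition.
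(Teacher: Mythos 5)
Your proof is correct: both defining conditions for $\lozenge A=(\nu^{c}_{A},\nu_{A})$ reduce, via the sign-flip $1-(\cdot)$, to condition $(2)$ of $A$ being an $IFLI(S)$, and the paper itself offers no more than this, dismissing the lemma as ``routine verification.'' Your write-up is exactly that verification made explicit, so it matches the paper's (implicit) approach.
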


Combining Lemmas $3.5$ and $3.6$ we obtain the following theorem.

\begin{theorem}
$A=(\mu_{A},\nu_{A})$ is an $IFLI(S)(IFRI(S),IFI(S)),$ if and only if $\square A$ and $\lozenge A$ are $IFLI(S)(IFRI(S),IFI(S)).$
\end{theorem}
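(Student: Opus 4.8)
The statement is an equivalence, so the plan is to treat the two implications separately, with the forward direction being essentially a restatement of the two preceding lemmas. For necessity, suppose $A=(\mu_{A},\nu_{A})$ is an $IFLI(S)$. Then Lemma $3.5$ gives at once that $\square A=(\mu_{A},\mu^{c}_{A})$ is an $IFLI(S)$, and Lemma $3.6$ gives that $\lozenge A=(\nu^{c}_{A},\nu_{A})$ is an $IFLI(S)$. Thus this half requires no new argument beyond invoking the lemmas, which is exactly what the sentence ``Combining Lemmas $3.5$ and $3.6$'' is signalling.

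For sufficiency, I would assume that both $\square A=(\mu_{A},\mu^{c}_{A})$ and $\lozenge A=(\nu^{c}_{A},\nu_{A})$ are $IFLI(S)$ and recover the two defining inequalities of Definition $3.1$ for $A$ itself. The idea is to read off each inequality from whichever of the two operators carries the relevant component. Since $\mu_{A}$ is the membership function of $\square A$, condition $(1)$ of Definition $3.1$ applied to $\square A$ yields $\mu_{A}(x\gamma y)\geq\mu_{A}(y)$ for all $x,y\in S$ and all $\gamma\in\Gamma$. Likewise, since $\nu_{A}$ is the non-membership function of $\lozenge A$, condition $(2)$ applied to $\lozenge A$ yields $\nu_{A}(x\gamma y)\leq\nu_{A}(y)$ for all such $x,y,\gamma$. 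These are precisely conditions $(1)$ and $(2)$ of Definition $3.1$ for $A=(\mu_{A},\nu_{A})$, so $A$ is an $IFLI(S)$.

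The only genuine point of care, rather than a real obstacle, is the bookkeeping of which component supplies which inequality: one must extract the membership condition from $\square A$ and the non-membership condition from $\lozenge A$, and not try to use the ``complementary'' components $\mu^{c}_{A}$ and $\nu^{c}_{A}$, which are irrelevant to recovering $A$. I would also note in passing that $A$ is assumed to be an $IFS$ of $S$ from the outset, so the constraint $0\leq\mu_{A}(x)+\nu_{A}(x)\leq 1$ is already in force and need not be re-derived. Finally, the parenthetical cases $IFRI(S)$ and $IFI(S)$ follow the same template, using the right-ideal form of Definition $3.2$ (and both conditions together for $IFI(S)$), in keeping with the paper's stated convention of proving only the $IFLI$ case.
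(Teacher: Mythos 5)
Your proof is correct and follows the paper's approach: the paper itself offers no more than the sentence ``Combining Lemmas $3.5$ and $3.6$ we obtain the following theorem,'' and your forward direction is exactly that combination, while your converse fills in the routine argument the paper leaves implicit (reading condition $(1)$ of Definition $3.1$ off the membership component of $\square A$ and condition $(2)$ off the non-membership component of $\lozenge A$). The bookkeeping point you flag --- that the complementary components $\mu^{c}_{A}$ and $\nu^{c}_{A}$ carry no extra information about $A$ --- is precisely why both $\square A$ and $\lozenge A$ are needed in the hypothesis, so your proof is complete as written.
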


\begin{definition}
For any $t\in [0,1]$ and a fuzzy subset $\mu$ of $S,$ the set
$$
\begin{array}{ll}
U(\mu; t)=\{x\in S:\mu(x)\geq t\}(\text{resp. } L(\mu; t)=\{x\in S:\mu(x)\leq t\})
\end{array}
$$
is called an \it{upper$($resp. lower$)$ $t$-level cut} of $\mu.$
\end{definition}

\begin{theorem}
If $A=(\mu_{A},\nu_{A})$ is an $IFLI(S)(IFRI(S),IFI(S)),$ then the upper and lower level cuts $U(\mu_{A};t)$ and $L(\nu_{A};t)$ are $LI(S)(RI(S),I(S)),$ for every $t\in Im(\mu_{A})\cap Im(\nu_{A}).$
\end{theorem}

\begin{proof}
Let $t\in Im(\mu_{A})\cap Im(\nu_{A}).$ Let $x\in S,\gamma\in\Gamma$ and $y\in U(\mu_{A};t).$ Then $\mu_{A}(y)\geq t.$ Since $A=(\mu_{A},\nu_{A})$ is an $IFLI(S),$ hence $\mu_{A}(x\gamma y)\geq \mu_{A}(y)\geq t.$ Consequently, $x\gamma y\in U(\mu_{A};t).$

Again, let $x\in S,\gamma\in\Gamma$ and $y\in L(\nu_{A};t).$ Then $\nu_{A}(y)\leq t.$ Since $A=(\mu_{A},\nu_{A})$ is an $IFLI(S),$ hence $\nu_{A}(x\gamma y)\leq \nu_{A}(y)\leq t.$ Consequently, $x\gamma y\in L(\nu_{A};t).$ Hence $U(\mu_{A};t)$ and $L(\nu_{A};t)$ are $LI(S).$ Similarly we can prove the other cases also.
\end{proof}

\begin{theorem}
If $A=(\mu_{A},\nu_{A})$ is an $IFS$ of $S$ such that the non-empty sets $U(\mu_{A};t)$ and $L(\nu_{A};t)$ are $LI(S)(RI(S),I(S)),$ for $t\in [0,1].$ Then the $IFS,$ $A=(\mu_{A},\nu_{A})$ is an $IFLI(S)(IFRI(S),IFI(S)).$
\end{theorem}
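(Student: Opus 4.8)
The plan is to verify the two defining inequalities of Definition 3.1 directly, arguing by contradiction and exploiting the hypothesis that \emph{every} non-empty level set is a left ideal. Fix arbitrary $x,y\in S$ and $\gamma\in\Gamma$; the goal is to show $\mu_{A}(x\gamma y)\geq\mu_{A}(y)$ and $\nu_{A}(x\gamma y)\leq\nu_{A}(y)$.

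For the membership function, I would suppose toward a contradiction that $\mu_{A}(x\gamma y)<\mu_{A}(y)$ and then set $t=\mu_{A}(y)\in[0,1]$. Since $\mu_{A}(y)\geq t$ we have $y\in U(\mu_{A};t)$, so this level set is non-empty and, by hypothesis, a left ideal of $S$; that is, $S\Gamma U(\mu_{A};t)\subseteq U(\mu_{A};t)$. In particular $x\gamma y\in U(\mu_{A};t)$, which forces $\mu_{A}(x\gamma y)\geq t=\mu_{A}(y)$, contradicting the assumption. The non-membership function is handled dually: assuming $\nu_{A}(x\gamma y)>\nu_{A}(y)$ and putting $t=\nu_{A}(y)$ gives $y\in L(\nu_{A};t)$, so $L(\nu_{A};t)$ is a non-empty left ideal and hence $x\gamma y\in L(\nu_{A};t)$, yielding $\nu_{A}(x\gamma y)\leq t=\nu_{A}(y)$, again a contradiction. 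Thus both conditions of Definition 3.1 hold and $A=(\mu_{A},\nu_{A})$ is an $IFLI(S)$. The $IFRI(S)$ and $IFI(S)$ cases follow verbatim, using instead the right-ideal inclusion $U(\mu_{A};t)\Gamma S\subseteq U(\mu_{A};t)$ (and its analogue for $L(\nu_{A};t)$).

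There is little genuine difficulty here; the one point requiring care is the choice of the threshold. It must be taken to be exactly the value $t=\mu_{A}(y)$ (respectively $t=\nu_{A}(y)$), so that the element $y$ itself lies in the level set and witnesses its non-emptiness. Only then is the ideal hypothesis applicable to $x\gamma y$. Note also that this is why Theorem 3.9 quantifies over all $t\in[0,1]$ rather than over $Im(\mu_{A})\cap Im(\nu_{A})$ as in Theorem 3.8: the thresholds $\mu_{A}(y)$ and $\nu_{A}(y)$ produced by the contradiction argument need not lie in that restricted index set.
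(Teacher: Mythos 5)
Your proof is correct and follows essentially the same route as the paper's: choose the threshold to be the value of the function at $y$ itself, so that $y$ witnesses non-emptiness of the level set, then apply the ideal hypothesis to place $x\gamma y$ in that same level set. If anything, your version is more careful than the paper's own write-up, which fixes a single $t_{0}$ with $\mu_{A}(y)=t_{0}=\nu_{A}(y)$ --- tacitly assuming the two values coincide --- whereas you correctly run the argument with the two thresholds $\mu_{A}(y)$ and $\nu_{A}(y)$ independently; the only cosmetic excess on your side is the contradiction framing, since the conclusion $\mu_{A}(x\gamma y)\geq\mu_{A}(y)$ follows directly once $x\gamma y\in U(\mu_{A};\mu_{A}(y))$.
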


\begin{proof}
For $t\in [0,1],$ let us assume that the non-empty sets $U(\mu_{A};t)$ and $L(\nu_{A};t)$ are $LI(S).$ Now we shall show that $A=(\mu_{A},\nu_{A})$ satisfies the conditions of Definition $3.1.$ Let $y\in S$ such that $\mu_{A}(y)=t_{0}=\nu_{A}(y).$ Then $y\in U(\mu_{A};t_{0})$ and $y\in L(\nu_{A};t_{0}).$ Let $x\in S$ and $\gamma\in\Gamma.$ Since $U(\mu_{A};t)$ and $L(\nu_{A};t)$ are $LI(S),$ so $x\gamma y\in U(\mu_{A};t_{0})$ and $x\gamma y\in L(\nu_{A};t_{0})$ which implies that $\mu_{A}(x\gamma y)\geq t_{0}=\mu_{A}(y)$ and $\nu_{A}(x\gamma y)\leq t_{0}=\nu_{A}(y).$ Hence $A=(\mu_{A},\nu_{A})$ is an $IFLI(S).$ Similarly we can prove the other cases also.
\end{proof}

\begin{proposition}
Let $S$ be a $\Gamma$-semigroup and $A=(\mu_{A},\nu_{A})$ be an $IFI(S).$ 

$(1)$ If $\omega$ be a fixed element of $S,$ then the set $A^{\omega}=(\mu^{\omega}_{A},\nu^{\omega}_{A})=\{x\in S: \mu_{A}(x)\geq\mu_{A}(\omega),\nu_{A}(x)\leq\nu_{A}(\omega)\}$ is an $I(S).$ 

$(2)$ The set $U=(\mu_{U},\nu_{U})=\{x\in S: \mu_{U}(x)=\mu_{U}(0),\nu_{U}(x)=\nu_{U}(0)\}$ is an $I(S).$
\end{proposition}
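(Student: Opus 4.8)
The plan is to verify directly that each set absorbs $\Gamma$-multiplication on both sides, using only the defining inequalities of an $IFI(S)$ from Definitions $3.1$--$3.3$. Both parts are essentially level-set arguments, and Part $(2)$ will reduce to Part $(1)$.

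For Part $(1)$, first I would note that $A^{\omega}$ is non-empty, since $\mu_{A}(\omega)\geq\mu_{A}(\omega)$ and $\nu_{A}(\omega)\leq\nu_{A}(\omega)$ give $\omega\in A^{\omega}$. To establish $S\Gamma A^{\omega}\subseteq A^{\omega}$, take $x\in S$, $\gamma\in\Gamma$ and $y\in A^{\omega}$, so that $\mu_{A}(y)\geq\mu_{A}(\omega)$ and $\nu_{A}(y)\leq\nu_{A}(\omega)$. Since $A$ is in particular an $IFLI(S)$, the inequalities $\mu_{A}(x\gamma y)\geq\mu_{A}(y)$ and $\nu_{A}(x\gamma y)\leq\nu_{A}(y)$ chain with the previous ones to give $\mu_{A}(x\gamma y)\geq\mu_{A}(\omega)$ and $\nu_{A}(x\gamma y)\leq\nu_{A}(\omega)$, i.e. $x\gamma y\in A^{\omega}$. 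The right-ideal inclusion $A^{\omega}\Gamma S\subseteq A^{\omega}$ is proved symmetrically, now invoking that $A$ is an $IFRI(S)$, so that for $x\in A^{\omega}$, $y\in S$, $\gamma\in\Gamma$ one has $\mu_{A}(x\gamma y)\geq\mu_{A}(x)\geq\mu_{A}(\omega)$ and $\nu_{A}(x\gamma y)\leq\nu_{A}(x)\leq\nu_{A}(\omega)$. Being both a left and a right ideal, $A^{\omega}$ is an $I(S)$.

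For Part $(2)$, interpreting the displayed set via $\mu_{A},\nu_{A}$ and taking $0$ to be the zero element of $S$, the key preliminary step is to show that $0$ maximizes $\mu_{A}$ and minimizes $\nu_{A}$. Indeed, for any $x\in S$ and $\gamma\in\Gamma$ we have $x\gamma 0=0$, so the $IFRI(S)$ inequalities yield $\mu_{A}(0)=\mu_{A}(x\gamma 0)\geq\mu_{A}(x)$ and $\nu_{A}(0)=\nu_{A}(x\gamma 0)\leq\nu_{A}(x)$. Consequently the conditions $\mu_{A}(x)\geq\mu_{A}(0)$ and $\nu_{A}(x)\leq\nu_{A}(0)$ are forced to be equalities, so that the equality-defined set $U$ coincides with $A^{0}$, the set from Part $(1)$ for the choice $\omega=0$. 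Hence $U$ is an $I(S)$ by Part $(1)$.

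The computations are routine absorption checks; the only point requiring genuine care is Part $(2)$, where one must first justify that the distinguished element $0$ realizes the extremal values of $\mu_{A}$ and $\nu_{A}$, so that the equality-defined $U$ equals the inequality-defined $A^{0}$. This justification in turn rests on $0$ being a zero of $S$ (satisfying $x\gamma 0=0$), which is the implicit hypothesis that makes the statement of Part $(2)$ meaningful.
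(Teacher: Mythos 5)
Your proof is correct: the paper itself omits the argument (stating only that ``the proof is straightforward''), and your direct absorption checks---showing $S\Gamma A^{\omega}\subseteq A^{\omega}$ and $A^{\omega}\Gamma S\subseteq A^{\omega}$ by chaining the $IFLI(S)$ and $IFRI(S)$ inequalities with $\mu_{A}(y)\geq\mu_{A}(\omega)$, $\nu_{A}(y)\leq\nu_{A}(\omega)$, then reducing Part $(2)$ to Part $(1)$---are exactly the routine verification the authors had in mind. Your added care in Part $(2)$ is also well placed: the paper's statement implicitly assumes $S$ has a zero element (and its $\mu_{U},\nu_{U}$ should read $\mu_{A},\nu_{A}$), and your observation that $x\gamma 0=0$ forces $\mu_{A}(0)=\underset{y\in S}{\sup}\,\mu_{A}(y)$ and $\nu_{A}(0)=\underset{y\in S}{\inf}\,\nu_{A}(y)$, so that the equality-defined $U$ coincides with $A^{0}$, correctly supplies a detail the paper glosses over.
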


\begin{proof}
The proof is straightforward and so we omit it.
\end{proof}

\begin{theorem}
Let $I$ be a non-empty subset of $S.$ If two fuzzy
subsets $\mu$ and $\nu$ are defined on $S$ by $$\mu (x):=\left\{
\begin{array}{l}
\alpha _{0}\text{ \ if }x\in I \\
\alpha _{1}\text{ \ if }x\in S-I%
\end{array}%
\right. $$ and $$\nu (x):=\left\{
\begin{array}{l}
\beta _{0}\text{ \ if }x\in I \\
\beta _{1}\text{ \ if }x\in S-I%
\end{array}%
\right. $$
where $0\leq\alpha _{1}<\alpha _{0}, 0\leq\beta _{0}<\beta _{1}$ and $\alpha_{i}+\beta_{i}\leq 1$ for $i=0,1.$  Then $A=(\mu,\nu)$ is an $IFLI(S)(IFRI(S),IFI(S))$ and $U(\mu;\alpha_{0})=I=L(\nu;\beta_{0}).$
\end{theorem}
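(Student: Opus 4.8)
The plan is to treat this as a generalized characteristic-function construction: $\mu$ is the two-valued step function taking the larger value $\alpha_0$ on $I$ and the smaller value $\alpha_1$ off $I$, while $\nu$ takes the smaller value $\beta_0$ on $I$ and the larger value $\beta_1$ off $I$. First I would record that $A=(\mu,\nu)$ is a genuine $IFS$ of $S$: the values of $\mu$ and $\nu$ lie in $\{\alpha_0,\alpha_1\}$ and $\{\beta_0,\beta_1\}$ respectively, so the hypothesis $\alpha_i+\beta_i\leq 1$ together with $\alpha_i,\beta_i\geq 0$ gives $0\leq\mu(x)+\nu(x)\leq 1$ for every $x\in S$. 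Following the paper's stated convention I then verify only the $IFLI(S)$ conditions of Definition~3.1; the $IFRI(S)$ and $IFI(S)$ cases are identical after replacing the left-ideal closure by the corresponding right or two-sided closure.

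For the membership condition $\mu(x\gamma y)\geq\mu(y)$ I would split on whether $y\in I$. If $y\notin I$ then $\mu(y)=\alpha_1$, the minimum of the two values $\mu$ can take, so the inequality is automatic. If $y\in I$ then, because $I$ is a left ideal, $x\gamma y\in S\Gamma I\subseteq I$, whence $\mu(x\gamma y)=\alpha_0=\mu(y)$. The non-membership condition $\nu(x\gamma y)\leq\nu(y)$ is handled by the same dichotomy: when $y\notin I$, $\nu(y)=\beta_1$ is the maximal value of $\nu$ and the inequality holds trivially; when $y\in I$, the closure $x\gamma y\in I$ forces $\nu(x\gamma y)=\beta_0=\nu(y)$. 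This establishes that $A$ is an $IFLI(S)$.

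Finally I would read off the two level cuts directly from the definitions. Since $\alpha_1<\alpha_0$, an element $x$ satisfies $\mu(x)\geq\alpha_0$ exactly when $\mu(x)=\alpha_0$, i.e. exactly when $x\in I$, giving $U(\mu;\alpha_0)=I$; symmetrically $\beta_0<\beta_1$ means $\nu(x)\leq\beta_0$ holds precisely when $\nu(x)=\beta_0$, i.e. when $x\in I$, so $L(\nu;\beta_0)=I$.

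The only real point of care — and the step I expect to be the crux — is that the left-ideal closure $S\Gamma I\subseteq I$ is precisely what lets one push the strict inequalities $\alpha_1<\alpha_0$ and $\beta_0<\beta_1$ through the products. In fact the same analysis shows this closure is not merely sufficient but indispensable: if $I$ were not a left ideal, one could find $y\in I$ together with $x,\gamma$ such that $x\gamma y\notin I$, forcing $\mu(x\gamma y)=\alpha_1<\alpha_0=\mu(y)$ and violating condition~(1). Hence the hypothesis must implicitly assume $I$ to be the corresponding $LI(S)$ (resp. $RI(S)$, $I(S)$), and with that identification in place the remainder is routine bookkeeping on the two-valued functions.
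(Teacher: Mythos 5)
Your proof is correct and follows essentially the same route as the paper's: the identical case split on whether $y\in I$, using the closure $S\Gamma I\subseteq I$ to force $\mu(x\gamma y)=\alpha_0$ and $\nu(x\gamma y)=\beta_0$, and the same direct reading of the level cuts $U(\mu;\alpha_0)=I=L(\nu;\beta_0)$ from the strict inequalities $\alpha_1<\alpha_0$, $\beta_0<\beta_1$. You also rightly flagged that the stated hypothesis ``non-empty subset'' is deficient---the paper's own proof likewise begins by assuming $I$ is a $LI(S)$, so the implicit ideal hypothesis you identified is exactly what the authors use.
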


\begin{proof}
Let $I$ be a $LI(S)$ and $x,y\in S,\gamma\in\Gamma.$ If $y\notin I,$ then $\mu(y)=\alpha_{1}$ and $\nu(y)=\beta_{1}$ and $\mu(x\gamma y)=\alpha_{0}$ or $\alpha_{1}$ and $\nu(x\gamma y)=\beta_{0}$ or $\beta_{1}$ according as $x\gamma y\in I$ or $x\gamma y\notin I.$ Again if $y\in I,$ then $x\gamma y\in I$ and so $\mu(x\gamma y)=\alpha_{0}$ and $\nu(x\gamma y)=\beta_{0}.$ Thus, we see that $\mu(x\gamma y)\geq\mu(y)$ and $\nu(x\gamma y)\leq\nu(y).$ Consequently, $A=(\mu,\nu)$ is an $IFLI(S).$ Similar is the proof for other cases.

\indent In order to prove the converse, we first observe that by definition of $\mu$ and $\nu,$ $U(\mu;\alpha_{0})=I=L(\nu;\beta_{0}).$ Then the proof follows from Theorem $3.9.$
\end{proof}

Following result is the characteristic function criterion of an $IFLI(S)(IFRI(S),IFI\\(S))$ which follows as an easy consequence of the above result.

\begin{corollary}
Let $\chi_{P}$ be the characteristic function of a $LI(S)(RI(S),I(S)),$ $P.$ Then $I=(\chi_{P},\chi_{P}^{c})$ is an $IFLI(S)(IFRI(S),IFI(S)).$
\end{corollary}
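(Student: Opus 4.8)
The plan is to obtain this directly from Theorem 3.11 by specializing the four parameters to the values taken by a characteristic function and its complement. First I would record that, by definition, $\chi_{P}(x)=1$ for $x\in P$ and $\chi_{P}(x)=0$ for $x\in S-P$, while $\chi_{P}^{c}(x)=1-\chi_{P}(x)$ gives $\chi_{P}^{c}(x)=0$ for $x\in P$ and $\chi_{P}^{c}(x)=1$ for $x\in S-P$. Thus $(\chi_{P},\chi_{P}^{c})$ is exactly the pair $(\mu,\nu)$ of Theorem 3.11 associated with the subset $I=P$, under the choice $\alpha_{0}=1$, $\alpha_{1}=0$, $\beta_{0}=0$, $\beta_{1}=1$.

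Next I would verify that these values satisfy the hypotheses of Theorem 3.11. The ordering conditions hold since $0\le\alpha_{1}=0<1=\alpha_{0}$ and $0\le\beta_{0}=0<1=\beta_{1}$. The admissibility condition $\alpha_{i}+\beta_{i}\le 1$ holds for both indices, as $\alpha_{0}+\beta_{0}=1+0=1$ and $\alpha_{1}+\beta_{1}=0+1=1$; in particular this also confirms that $(\chi_{P},\chi_{P}^{c})$ is a genuine $IFS(S)$. Since $P$ is assumed to be a $LI(S)$, Theorem 3.11 then applies verbatim and yields that $(\chi_{P},\chi_{P}^{c})$ is an $IFLI(S)$. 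The $RI(S)$ and $I(S)$ cases follow in the same way from the corresponding cases of Theorem 3.11.

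There is essentially no obstacle here: the entire content is the observation that a characteristic pair is the extremal instance of the two-valued construction in Theorem 3.11. The only point requiring (trivial) care is checking that the strict inequalities $\alpha_{1}<\alpha_{0}$ and $\beta_{0}<\beta_{1}$ together with the bound $\alpha_{i}+\beta_{i}\le 1$ are met by the values $0$ and $1$, which they are, the bound holding with equality.
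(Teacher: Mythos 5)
Your proposal is correct and matches the paper's intended argument exactly: the paper derives this corollary as ``an easy consequence'' of the preceding two-valued construction theorem (Theorem 3.12 in the paper's numbering, which you cite as 3.11), and your specialization $\alpha_{0}=1$, $\alpha_{1}=0$, $\beta_{0}=0$, $\beta_{1}=1$ with the explicit check of the inequalities $0\le\alpha_{1}<\alpha_{0}$, $0\le\beta_{0}<\beta_{1}$ and $\alpha_{i}+\beta_{i}\le 1$ is precisely that deduction, spelled out in more detail than the paper bothers to give.
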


Now we define composition of $IFI(S)$ in order to characterize regular $\Gamma$-semigroups in terms of $IFI(S).$

\begin{definition}
Let $S$ be a $\Gamma$-semigroup. Let $A=(\mu_{A},\nu_{A})$ and $B=(\mu_{B},\nu_{B})\in IFLI(S)[IFRI(S),IFI(S)].$ Then the product $A\circ B$ of $A$ and $B$ is defined as
\end{definition}

$(\mu_{A\circ B})(x)=\left\{
\begin{array}
[l]{l}%
\underset{x=u\gamma v}{\sup}[\min\{\mu_{A}(u),\mu_{B}(v)\}:u,v\in S;\gamma
\in\Gamma]\\
\ \ \ 0,\text{ if for any }u,v\in S\text{ and for any }\gamma\in\Gamma,x\neq u\gamma
v
\end{array}
\right.  $

and

$(\nu_{A\circ B})(x)=\left\{
\begin{array}
[l]{l}%
\underset{x=u\gamma v}{\inf}[\max\{\nu_{A}(u),\nu_{B}(v)\}:u,v\in S;\gamma
\in\Gamma]\\
\ \ \ 1,\text{ if for any }u,v\in S\text{ and for any }\gamma\in\Gamma,x\neq u\gamma
v
\end{array}
\right.  $

\begin{theorem}
In a $\Gamma$-semigroup $S$ the following are equivalent: $(1)$ $A=(\mu_{A},\nu_{A})$ is an $IFLI(S)(IFRI(S),IFI(S)),$ \ \ $(2)$ $S\circ A\subseteq A(A\circ S\subseteq A),$ where $S=(\chi_{S},\chi^{c}_{S})$ and $\chi_{S}$ is the characteristic function of $S.$
\end{theorem}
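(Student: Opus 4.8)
The plan is to unwind the definition of the product $S\circ A$ using the fact that $S=(\chi_S,\chi_S^c)$ has $\mu_S\equiv 1$ and $\nu_S\equiv 0$ on $S$, and then to read off the left-ideal inequalities directly. First I would record the reduction: for any $x\in S$ that admits a factorization $x=u\gamma v$, we have
$$\mu_{S\circ A}(x)=\underset{x=u\gamma v}{\sup}\min\{\chi_S(u),\mu_A(v)\}=\underset{x=u\gamma v}{\sup}\mu_A(v),\qquad \nu_{S\circ A}(x)=\underset{x=u\gamma v}{\inf}\max\{\chi_S^c(u),\nu_A(v)\}=\underset{x=u\gamma v}{\inf}\nu_A(v),$$
while if $x$ admits no such factorization then $\mu_{S\circ A}(x)=0$ and $\nu_{S\circ A}(x)=1$. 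I would also recall that, by the definition of inclusion of $IFS$ in the preliminaries, the statement $S\circ A\subseteq A$ means precisely $\mu_{S\circ A}(x)\leq\mu_A(x)$ and $\nu_{S\circ A}(x)\geq\nu_A(x)$ for every $x\in S$.

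For the implication $(1)\Rightarrow(2)$ I would fix $x\in S$ and split into two cases. If $x$ has no factorization, then $\mu_{S\circ A}(x)=0\leq\mu_A(x)$ and $\nu_{S\circ A}(x)=1\geq\nu_A(x)$ hold trivially, since $\mu_A$ and $\nu_A$ take values in $[0,1]$. If instead $x=u\gamma v$ for some $u,v\in S$ and $\gamma\in\Gamma$, then the $IFLI(S)$ hypothesis gives $\mu_A(v)\leq\mu_A(u\gamma v)=\mu_A(x)$ for every such factorization, so taking the supremum over all factorizations yields $\mu_{S\circ A}(x)\leq\mu_A(x)$; dually, $\nu_A(v)\geq\nu_A(u\gamma v)=\nu_A(x)$ gives $\nu_{S\circ A}(x)\geq\nu_A(x)$ after taking the infimum. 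Hence $S\circ A\subseteq A$.

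For $(2)\Rightarrow(1)$ I would fix $x,y\in S$ and $\gamma\in\Gamma$ and set $z=x\gamma y$. Since $z$ visibly admits the factorization $z=x\gamma y$, the element $y$ is one of the admissible right factors, so $\mu_{S\circ A}(z)\geq\mu_A(y)$ and $\nu_{S\circ A}(z)\leq\nu_A(y)$ directly from the reduced sup/inf formulas. Feeding these into the inclusion hypothesis $\mu_{S\circ A}(z)\leq\mu_A(z)$ and $\nu_{S\circ A}(z)\geq\nu_A(z)$ gives $\mu_A(x\gamma y)\geq\mu_A(y)$ and $\nu_A(x\gamma y)\leq\nu_A(y)$, which are exactly the defining conditions of an $IFLI(S)$.

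The computations are routine; the only genuinely delicate point is the bookkeeping of the supremum and infimum in the product together with the degenerate case where $x$ is not of the form $u\gamma v$, where one must fall back on the boundary values $0$ and $1$ rather than on the sup/inf formula. Once the reduced formulas for $\mu_{S\circ A}$ and $\nu_{S\circ A}$ are in hand, each direction is a one-line estimate. The $IFRI(S)$ case is handled identically using $A\circ S$, so that $y$ plays the role of the \emph{left} factor and the right-ideal inequalities appear, and the $IFI(S)$ case follows by combining the two.
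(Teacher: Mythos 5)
Your proposal is correct and follows essentially the same route as the paper's proof: both reduce the product formulas via $\chi_{S}\equiv 1$ and $\chi^{c}_{S}\equiv 0$, handle the non-factorizable case with the boundary values $0$ and $1$, bound the supremum/infimum termwise using the $IFLI(S)$ condition for $(1)\Rightarrow(2)$, and for $(2)\Rightarrow(1)$ exploit the single factorization $z=x\gamma y$ to extract the defining inequalities. No substantive difference to report.
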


\begin{proof}
Let $A=(\mu_{A},\nu_{A})$ be an $IFLI(S)$. Let $a\in S.$ Suppose there exist
$u,v\in S$ and $\delta\in\Gamma$ such that $a=u\delta v.$ Then, since $A=(\mu_{A},\nu_{A})$ is
an $IFLI(S),$ we have%
\begin{align*}
(\chi_{S}\circ\mu_{A})(a)  &  =\underset{a=x\gamma y}{\sup}[\min\{\chi_{S}(x),\mu_{A}(y)\}]\\
&  =\underset{a=x\gamma y}{\sup}[\min\{1,\mu_{A}(y)\}]=\underset{a=x\gamma y}%
{\sup}{\mu_{A}(y)}
\end{align*}
and%
\begin{align*}
(\chi^{c}_{S}\circ\nu_{A})(a)  &  =\underset{a=x\gamma y}{\inf}[\max\{\chi^{c}_{S}(x),\nu_{A}(y)\}]\\
&  =\underset{a=x\gamma y}{\inf}[\max\{0,\nu_{A}(y)\}]=\underset{a=x\gamma y}%
{\inf}{\nu_{A}(y)}.
\end{align*}
Now, since $A=(\mu_{A},\nu_{A})$ is an $IFLI(S),$ $\mu_{A}(x\gamma y)\geq\mu_{A}(y)$ and $\nu_{A}(x\gamma y)\leq\nu_{A}(y)$ for all
$x,y\in S$ and for all $\gamma\in\Gamma.$ So in particular, $\mu_{A}(y)\leq\mu_{A}(a)$ and $\nu_{A}(y)\geq\nu_{A}(a)$ for all $a=x\gamma y$. Hence $\underset{a=x\gamma y}{\sup}{\mu_{A}(y)}\leq\mu_{A}(a)$ and $\underset{a=x\gamma y}{\inf}{\nu_{A}(y)}\geq\nu_{A}(a).$
Thus $\mu_{A}(a)\geq(\chi_{S}\circ\mu_{A})(a)$ and $\nu_{A}(a)\leq(\chi^{c}_{S}\circ\nu_{A})(a).$ If there do not exist $x,y\in S,\gamma
\in\Gamma$ such that $a=x\gamma y$ then $(\chi_{S}\circ\mu_{A})(a)=0\leq\mu_{A}(a)$ and $(\chi^{c}_{S}\circ\nu_{A})(a)=1\geq\nu_{A}(a).$ By a similar argument we can prove the other case also.

Conversely, Let $x,y\in S$, $\gamma\in\Gamma$
and $a:=x\gamma y.$ Then $\mu_{A}(x\gamma y)=\mu_{A}(a)\geq(\chi_{S}\circ\mu_{A})(a)$ and $\nu_{A}(x\gamma y)=\nu_{A}(a)\leq(\chi^{c}_{S}\circ\nu_{A})(a).$ Now%
\begin{align*}
(\chi_{S}\circ\mu_{A})(a)  &  =\underset{a=u\alpha v}{\sup}[\min\{\chi_{S}(u),\mu_{A}
(v)\}]\geq\min\{\chi_{S}(x),\mu_{A}(y)\}\\
&  =\min\{1,\mu_{A}(y)\}=\mu_{A}(y)
\end{align*}
and%
\begin{align*}
(\chi^{c}_{S}\circ\nu_{A})(a)  &  =\underset{a=u\alpha v}{\inf}[\max\{\chi^{c}_{S}(u),\nu_{A}
(v)\}]\leq\max\{\chi^{c}_{S}(x),\nu_{A}(y)\}\\
&  =\max\{0,\nu_{A}(y)\}=\nu_{A}(y).
\end{align*}

Consequently, $\mu_{A}(x\gamma y)\geq\mu_{A}(y)$ and $\nu_{A}(x\gamma y)\leq\nu_{A}(y)$. Hence $A=(\mu_{A},\nu_{A})$ is an $IFLI(S).$ Similarly we can prove in case of $IFRI(S).$\newline
\end{proof}

Using the above theorem we deduce the following theorem.

\begin{theorem}
In a $\Gamma$-semigroup $S$ the following are equivalent: $(1)$ $A=(\mu_{A},\nu_{A})$ is an $IFI(S),$ $(2)$ $S\circ A\subseteq A$ and $A\circ S\subseteq A,$ where $S=(\chi_{S},\chi^{c}_{S})$ and $\chi_{S}$ is the characteristic function of $S.$
\end{theorem}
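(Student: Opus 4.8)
The plan is to reduce this statement to the preceding Theorem $3.15$ together with the defining property of an $IFI(S)$. First I would recall from Definition $3.3$ that $A=(\mu_{A},\nu_{A})$ is an $IFI(S)$ precisely when it is simultaneously an $IFLI(S)$ and an $IFRI(S)$. Thus the equivalence to be established splits into two independent pieces, one governing left-ideal behaviour and one governing right-ideal behaviour, and condition $(2)$ is already written as a conjunction of a ``left'' inclusion and a ``right'' inclusion.

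Next I would invoke Theorem $3.15$ in both of its forms. Applied to the left-ideal case it gives that $A$ is an $IFLI(S)$ if and only if $S\circ A\subseteq A$; applied to the right-ideal case it gives that $A$ is an $IFRI(S)$ if and only if $A\circ S\subseteq A$, where throughout $S=(\chi_{S},\chi^{c}_{S})$ and $\chi_{S}$ is the characteristic function of $S$. Conjoining these two equivalences yields that $A$ is both an $IFLI(S)$ and an $IFRI(S)$ if and only if $S\circ A\subseteq A$ and $A\circ S\subseteq A$ hold simultaneously. Combining this with the reduction from the first step---that being an $IFI(S)$ is exactly being both an $IFLI(S)$ and an $IFRI(S)$---immediately delivers the desired equivalence of $(1)$ and $(2)$.

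Because all the analytic content---the manipulation of the supremum and infimum defining the composition $A\circ B$ in Definition $3.14$, and the translation of the defining inequalities $\mu_{A}(x\gamma y)\geq\mu_{A}(y)$ and $\nu_{A}(x\gamma y)\leq\nu_{A}(y)$ into the inclusion of $IFS$s---has already been carried out in the proof of Theorem $3.15$, I do not expect any genuine obstacle here; the statement is essentially a formal corollary. The only point requiring a little care is to keep the two directions logically separated, so that the ``and'' appearing in condition $(2)$ corresponds correctly to the conjunction of the two hypotheses $IFLI(S)$ and $IFRI(S)$ in the definition of $IFI(S)$, rather than conflating the left and right cases.
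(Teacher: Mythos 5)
Your proposal is correct and matches the paper's own route exactly: the paper states Theorem 3.16 immediately after Theorem 3.15 with the remark ``Using the above theorem we deduce the following theorem,'' i.e.\ it likewise combines the two one-sided equivalences of Theorem 3.15 with Definition 3.3 of an $IFI(S)$ as the conjunction of $IFLI(S)$ and $IFRI(S)$. No further comment is needed.
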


\begin{proposition}
Let $A=(\mu_{A},\nu_{A})$ be an $IFRI(S)$ and $B=(\mu_{B},\nu_{B})$ be an $IFLI(S)$. Then $A\circ B\subseteq A\cap B.$
\end{proposition}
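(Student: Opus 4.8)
The plan is to unwind the containment $A\circ B\subseteq A\cap B$ directly from Definition $3.14$ and the ordering on $IFS(S)$. By item $(1)$ of the list of operations, $A\circ B\subseteq A\cap B$ means precisely that $\mu_{A\circ B}(x)\leq\min\{\mu_A(x),\mu_B(x)\}$ and $\nu_{A\circ B}(x)\geq\max\{\nu_A(x),\nu_B(x)\}$ for every $x\in S$. So I would fix an arbitrary $x\in S$ and establish these two pointwise inequalities separately, splitting on whether $x$ admits a factorization of the form $u\gamma v$.

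First I would dispose of the degenerate case: if there exist no $u,v\in S$ and $\gamma\in\Gamma$ with $x=u\gamma v$, then by definition $\mu_{A\circ B}(x)=0$ and $\nu_{A\circ B}(x)=1$; since all membership and non-membership values lie in $[0,1]$, the two required inequalities hold trivially. The substance of the argument lies in the case where $x=u\gamma v$ for some $u,v,\gamma$, where $\mu_{A\circ B}(x)=\sup_{x=u\gamma v}\min\{\mu_A(u),\mu_B(v)\}$ and $\nu_{A\circ B}(x)=\inf_{x=u\gamma v}\max\{\nu_A(u),\nu_B(v)\}$.

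The key step is to bound a single summand before passing to the supremum or infimum. For any fixed representation $x=u\gamma v$, I would use that $A$ is an $IFRI(S)$ to get $\mu_A(x)=\mu_A(u\gamma v)\geq\mu_A(u)$ and $\nu_A(x)\leq\nu_A(u)$, and that $B$ is an $IFLI(S)$ to get $\mu_B(x)=\mu_B(u\gamma v)\geq\mu_B(v)$ and $\nu_B(x)\leq\nu_B(v)$. Combining these gives $\min\{\mu_A(u),\mu_B(v)\}\leq\min\{\mu_A(x),\mu_B(x)\}$ and $\max\{\nu_A(u),\nu_B(v)\}\geq\max\{\nu_A(x),\nu_B(x)\}$. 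Since the right-hand bounds do not depend on the chosen representation, taking the supremum over all factorizations in the first line and the infimum in the second yields $\mu_{A\circ B}(x)\leq\min\{\mu_A(x),\mu_B(x)\}=\mu_{A\cap B}(x)$ and $\nu_{A\circ B}(x)\geq\max\{\nu_A(x),\nu_B(x)\}=\nu_{A\cap B}(x)$, which is exactly what is needed.

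The argument is essentially routine; the only point requiring care—and the one I would flag as the main potential pitfall rather than a genuine obstacle—is matching each ideal hypothesis to the correct factor. The right-ideal property of $A$ must be applied to the \emph{left} factor $u$ (controlling $\mu_A(u)$ and $\nu_A(u)$), while the left-ideal property of $B$ must be applied to the \emph{right} factor $v$. Getting this pairing right is what forces $A$ to be an $IFRI(S)$ and $B$ to be an $IFLI(S)$, and swapping the roles would break the bound. Once this is observed, no nontrivial estimate or case analysis beyond the factorization dichotomy is involved.
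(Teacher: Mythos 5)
Your proposal is correct and follows essentially the same route as the paper's own proof: bound each term $\min\{\mu_A(u),\mu_B(v)\}$ (resp.\ $\max\{\nu_A(u),\nu_B(v)\}$) using the right-ideal property of $A$ on the left factor and the left-ideal property of $B$ on the right factor, then pass to the supremum (resp.\ infimum), handling the no-factorization case trivially. Nothing is missing; your remark about matching each hypothesis to the correct factor is exactly the point the paper's computation implicitly uses.
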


\begin{proof}
Let $A=(\mu_{A},\nu_{A})$ be an $IFRI(S)$ and $B=(\mu_{B},\nu_{B})$ be an $IFLI(S).$ Let $x\in S$. Suppose there exist $u_{1},v_{1}\in S$
and $\gamma_{1}\in\Gamma$ such that $x=u_{1}\gamma_{1}v_{1}$. Then%
\begin{align*}
(\mu_{A}\circ\mu_{B})(x)  &  =\underset{x=u\gamma v}{\sup}\min\{\mu_{A}%
(u),\mu_{B}(v)\}\\
&  \leq\underset{x=u\gamma v}{\sup}\min\{\mu_{A}(u\gamma v),\mu_{B}(u\gamma
v)\}\\
&  =\min\{\mu_{A}(x),\mu_{B})\}(x)=(\mu_{A}\cap\mu_{B})(x)
\end{align*}
and%
\begin{align*}
(\nu_{A}\circ\nu_{B})(x)  &  =\underset{x=u\gamma v}{\inf}\max\{\nu_{A}%
(u),\nu_{B}(v)\}\\
&  \geq\underset{x=u\gamma v}{\inf}\max\{\nu_{A}(u\gamma v),\nu_{B}(u\gamma
v)\}\\
&  =\max\{\nu_{A}(x),\nu_{B})\}(x)=(\nu_{A}\cup\nu_{B})(x).
\end{align*}
Suppose there do not exist $u,v\in S$ such that $x=u\gamma v$. Then $(\mu
_{A}\circ\mu_{B})(x)=0\leq(\mu_{A}\cap\mu_{B})(x)$ and $(\nu
_{A}\circ\nu_{B})(x)=1\geq(\nu_{A}\cup\nu_{B})(x).$ Hence the proof.\newline
\end{proof}

From the above proposition and the definition of $A\cap B$ the following proposition follows easily.

\begin{proposition}
Let $A=(\mu_{A},\mu_{A}),B=(\mu_{B},\nu_{B})\in IFI(S)$. Then $A\circ B\subseteq A\cap B\subseteq A,B.$
\end{proposition}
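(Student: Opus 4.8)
The plan is to split the required chain $A\circ B\subseteq A\cap B\subseteq A,B$ into its two links and verify each separately, since the statement is flagged in the text as an easy consequence of Proposition $3.17$ together with the definition of intersection.

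First I would handle the inclusion $A\circ B\subseteq A\cap B$. The key observation is that the hypotheses of Proposition $3.17$ are automatically satisfied here: by Definition $3.3$, an $IFI(S)$ is by definition both an $IFLI(S)$ and an $IFRI(S)$. Hence, viewing $A=(\mu_{A},\nu_{A})$ as an $IFRI(S)$ and $B=(\mu_{B},\nu_{B})$ as an $IFLI(S)$, Proposition $3.17$ applies verbatim and gives $A\circ B\subseteq A\cap B$ with no further computation. No level-wise argument is needed, as the proposition already contains the supremum/infimum estimates.

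Next I would establish $A\cap B\subseteq A$ and $A\cap B\subseteq B$, which is purely a matter of unwinding definitions. By item $(4)$ of the preliminaries, $A\cap B=(\min\{\mu_{A},\mu_{B}\},\max\{\nu_{A},\nu_{B}\})$. For every $x\in S$ one has $\min\{\mu_{A}(x),\mu_{B}(x)\}\leq\mu_{A}(x)$ and $\max\{\nu_{A}(x),\nu_{B}(x)\}\geq\nu_{A}(x)$, which is exactly the membership/non-membership pair of inequalities defining $\subseteq$ in item $(1)$ of the preliminaries; thus $A\cap B\subseteq A$, and by the symmetric inequalities $A\cap B\subseteq B$. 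Combining this with the first link yields the full chain $A\circ B\subseteq A\cap B\subseteq A,B$.

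I do not anticipate any genuine obstacle: the content of the statement lies entirely in Proposition $3.17$, and the only point requiring a moment's attention is recognizing that an $IFI(S)$ qualifies simultaneously as a right ideal (for the left factor $A$) and as a left ideal (for the right factor $B$), so that the one-sided hypothesis of Proposition $3.17$ is met. The second inclusion is immediate from the definitions and could reasonably be compressed to a single sentence.
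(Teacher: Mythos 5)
Your proposal is correct and follows exactly the route the paper intends: the text introduces this proposition with the remark that it ``follows easily'' from Proposition $3.17$ together with the definition of $A\cap B$, which is precisely your decomposition into the two links $A\circ B\subseteq A\cap B$ (via the observation that an $IFI(S)$ is simultaneously an $IFRI(S)$ and an $IFLI(S)$) and $A\cap B\subseteq A,B$ (via the pointwise $\min$/$\max$ inequalities). Nothing is missing.
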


\begin{proposition}
Let $S$ be a regular $\Gamma$-semigroup and $A=(\mu_{A},\nu_{A})$ and $B=(\mu_{B},\nu_{B})$ be two $IFS(S)$. Then $A\circ B\supseteq A\cap B.$
\end{proposition}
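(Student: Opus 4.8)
The plan is to reduce the intuitionistic inclusion $A\cap B\subseteq A\circ B$ to its two scalar components and then, for each point, exhibit a single factorization that already witnesses the required bounds. Recall that $A\circ B\supseteq A\cap B$ means precisely $\mu_{A\circ B}(x)\geq\min\{\mu_{A}(x),\mu_{B}(x)\}$ and $\nu_{A\circ B}(x)\leq\max\{\nu_{A}(x),\nu_{B}(x)\}$ for every $x\in S$. Since the membership part of $A\circ B$ is a supremum and the non-membership part an infimum taken over all factorizations $x=u\gamma v$, it is enough to produce one such factorization for which $\min\{\mu_{A}(u),\mu_{B}(v)\}$ already dominates $\min\{\mu_{A}(x),\mu_{B}(x)\}$ while $\max\{\nu_{A}(u),\nu_{B}(v)\}$ is already dominated by $\max\{\nu_{A}(x),\nu_{B}(x)\}$.

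Fix $x\in S$. First I would invoke regularity: there exist $y\in S$ and $\alpha,\beta\in\Gamma$ with $x=x\alpha y\beta x$. Rewriting this as $x=(x\alpha y)\beta x$ exhibits an admissible factorization $x=u\beta x$ with $u=x\alpha y\in S$, so in particular the ``no factorization'' branch in the definition of $A\circ B$ does not occur. Taking this single term in the supremum and infimum, I obtain at once $\mu_{A\circ B}(x)\geq\min\{\mu_{A}(x\alpha y),\mu_{B}(x)\}$ together with $\nu_{A\circ B}(x)\leq\max\{\nu_{A}(x\alpha y),\nu_{B}(x)\}$.

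Next I would feed in the ideal property of the left factor. Because $A$ is a right ideal, $\mu_{A}(x\alpha y)\geq\mu_{A}(x)$ and $\nu_{A}(x\alpha y)\leq\nu_{A}(x)$; substituting these into the two bounds above and using monotonicity of $\min$ and $\max$ yields $\mu_{A\circ B}(x)\geq\min\{\mu_{A}(x),\mu_{B}(x)\}$ and $\nu_{A\circ B}(x)\leq\max\{\nu_{A}(x),\nu_{B}(x)\}$, which is exactly $A\cap B\subseteq A\circ B$. Symmetrically, one could instead factor as $x=x\alpha(y\beta x)$ and invoke the left-ideal property $\mu_{B}(y\beta x)\geq\mu_{B}(x)$, $\nu_{B}(y\beta x)\leq\nu_{B}(x)$ of $B$.

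The step carrying the real weight is the opening reduction together with the choice of factorization: the entire argument hinges on regularity supplying a factorization of $x$ in which one factor is again $x$ itself, so that ideal monotonicity can be applied to the other factor while the factor $x$ is left untouched. I would also be careful about the precise hypotheses here, since $\mu_{A}(x\alpha y)\geq\mu_{A}(x)$ is unavailable for an arbitrary $IFS$; the argument really requires the relevant factor to be an ideal, so the proof should be read with $A$ an $IFRI(S)$ (or, via the symmetric factorization, $B$ an $IFLI(S)$). Under those hypotheses it pairs with Proposition 3.17 to give the equality $A\circ B=A\cap B$ on a regular $\Gamma$-semigroup.
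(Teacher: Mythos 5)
Your argument does not prove the proposition as stated. The hypothesis is that $A=(\mu_{A},\nu_{A})$ and $B=(\mu_{B},\nu_{B})$ are arbitrary $IFS(S)$, and your proof genuinely needs $\mu_{A}(x\alpha y)\geq\mu_{A}(x)$ and $\nu_{A}(x\alpha y)\leq\nu_{A}(x)$, i.e.\ that $A$ is an $IFRI(S)$ (or, with the symmetric grouping, that $B$ is an $IFLI(S)$). You noticed this and proposed to reinterpret the hypotheses, but the statement is true exactly as written, and what you are missing is the structural feature of the $\Gamma$-semigroups used in this paper: they are \emph{both sided}, so besides the map $S\times\Gamma\times S\rightarrow S$ there is a map $\Gamma\times S\times\Gamma\rightarrow\Gamma$, $(\alpha,a,\beta)\longmapsto\alpha a\beta$, compatible with the associativity laws. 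Hence when regularity gives $c=c\gamma_{1}x\gamma_{2}c$, one may set $\gamma:=\gamma_{1}x\gamma_{2}\in\Gamma$ and read this as a factorization $c=c\gamma c$ in which \emph{both} factors are $c$ itself. Then $\min\{\mu_{A}(c),\mu_{B}(c)\}$ is literally one of the terms over which the supremum defining $(\mu_{A}\circ\mu_{B})(c)$ ranges, and $\max\{\nu_{A}(c),\nu_{B}(c)\}$ is one of the terms in the infimum defining $(\nu_{A}\circ\nu_{B})(c)$, so the inequalities follow with no ideal property of either factor. That is the paper's proof, and it is why the proposition can be asserted for arbitrary intuitionistic fuzzy subsets.

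Your weaker version is not useless: with $A$ an $IFRI(S)$ and $B$ an $IFLI(S)$ — precisely the hypotheses in force in Theorem $3.20$ — your argument is correct, and together with Proposition $3.17$ it does yield the regularity characterization $A\circ B=A\cap B$. But as a proof of Proposition $3.19$ it has a real gap: for an arbitrary $IFS(S)$ the grouping $x=(x\alpha y)\beta x$ gives no control whatsoever on $\mu_{A}(x\alpha y)$, and no choice of grouping that leaves $x\alpha y$ or $y\beta x$ as a factor can repair this. The repair is not to strengthen the hypotheses but to use the $\Gamma$-valued product $\gamma_{1}x\gamma_{2}$ to make both factors equal to $x$.
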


\begin{proof}
Let $c\in S.$ Since $S$ is regular, then there exists an element $x\in S$ and
$\gamma_{1},\gamma_{2}\in\Gamma$ such that $c=c\gamma_{1}x\gamma_{2}c=c\gamma
c$ where $\gamma:=\gamma_{1}x\gamma_{2}\in\Gamma$. Then%
\begin{align*}
(\mu_{A}\circ\mu_{B})(c)  &  =\underset{c=u\alpha v}{\sup}\{\min\{\mu
_{A}(u),\mu_{B}(v)\}\}\\
&  \geq\min\{\mu_{A}(c),\mu_{B}(c)\}=(\mu_{A}\cap\mu_{B})(c)
\end{align*}
and
\begin{align*}
(\nu_{A}\circ\nu_{B})(c)  &  =\underset{c=u\alpha v}{\inf}\{\max\{\nu
_{A}(u),\nu_{B}(v)\}\}\\
&  \leq\max\{\nu_{A}(c),\nu_{B}(c)\}=(\nu_{A}\cup\nu_{B})(c).
\end{align*}
Hence $A\circ B\supseteq A\cap B.$
\end{proof}

To conclude this section we obtain the following characterization of a regular $\Gamma$-semigroup in terms of intutionistic fuzzy ideals.\\

\begin{theorem}
In a $\Gamma$-semigroup $S$ the following are equivalent: $(1)$ $S$ is
regular, $(2)$ $A\circ B=A\cap B$ where $A=(\mu_{A},\nu_{A})$ is an $IFRI(S)$ and $B=(\mu_{B},\nu_{B})$ is an $IFLI(S).$
\end{theorem}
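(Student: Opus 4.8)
The plan is to prove the two implications separately, leaning on the inclusions already established.

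The forward implication $(1)\Rightarrow(2)$ should be immediate. If $A=(\mu_{A},\nu_{A})$ is an $IFRI(S)$ and $B=(\mu_{B},\nu_{B})$ is an $IFLI(S)$, then Proposition $3.17$ gives $A\circ B\subseteq A\cap B$ with no hypothesis on $S$, while Proposition $3.19$, applied to the two $IFS$'s $A$ and $B$ and using that $S$ is regular, gives the reverse inclusion $A\circ B\supseteq A\cap B$. Combining the two yields $A\circ B=A\cap B$.

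For the converse $(2)\Rightarrow(1)$ the idea is to feed characteristic-function ideals into the hypothesis and read off a set-theoretic identity. Fix $a\in S$ and set $R=\{a\}\cup a\Gamma S$ and $L=\{a\}\cup S\Gamma a$, the principal right and left ideals generated by $a$; that these are a $RI(S)$ and a $LI(S)$ follows from associativity together with $S\Gamma S\subseteq S$. By Corollary $3.13$, $A:=(\chi_{R},\chi_{R}^{c})$ is an $IFRI(S)$ and $B:=(\chi_{L},\chi_{L}^{c})$ is an $IFLI(S)$, so the product $A\circ B$ is defined and the hypothesis $(2)$ applies to this particular pair. The key computation is that $\chi_{R}\circ\chi_{L}=\chi_{R\Gamma L}$ and $\min\{\chi_{R},\chi_{L}\}=\chi_{R\cap L}$, with the dual equalities $\chi_{R}^{c}\circ\chi_{L}^{c}=\chi_{R\Gamma L}^{c}$ and $\max\{\chi_{R}^{c},\chi_{L}^{c}\}=\chi_{R\cap L}^{c}$ for the non-membership parts. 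These follow directly from Definition $3.14$ because all membership values here are $0$ or $1$: the supremum defining $(\chi_{R}\circ\chi_{L})(x)$ equals $1$ exactly when $x$ admits a factorization $u\gamma v$ with $u\in R$, $v\in L$, i.e. when $x\in R\Gamma L$. Hence $A\circ B=A\cap B$ forces $\chi_{R\Gamma L}=\chi_{R\cap L}$, that is, $R\Gamma L=R\cap L$.

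It remains to extract regularity from this identity. Since $a\in R\cap L=R\Gamma L$, and expanding $R\Gamma L=(\{a\}\cup a\Gamma S)\Gamma(\{a\}\cup S\Gamma a)$ (again invoking associativity and $S\Gamma S\subseteq S$) collapses the four pieces to $a\Gamma a\cup a\Gamma S\Gamma a$, we obtain $a\in a\Gamma a\cup a\Gamma S\Gamma a$. If $a\in a\Gamma S\Gamma a$, then $a=a\gamma_{1}x\gamma_{2}a$ for some $x\in S$ and $\gamma_{1},\gamma_{2}\in\Gamma$, which is precisely regularity. The delicate point, and the step I expect to be the main obstacle, is the remaining case $a\in a\Gamma a$, say $a=a\gamma a$, where a factorization through a genuine element of $S$ is not yet visible. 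This is resolved by iterating the relation: $a=a\gamma a=(a\gamma a)\gamma a=a\gamma a\gamma a$, and then reading the middle factor as an element of $S$, which exhibits $a=a\gamma a\gamma a\in a\Gamma S\Gamma a$. Thus in either case $a$ is regular, and since $a\in S$ was arbitrary, $S$ is regular, completing the equivalence.
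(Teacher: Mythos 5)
Your proof is correct, and while the forward implication coincides with the paper's (both quote Propositions 3.17 and 3.19), your converse takes a genuinely different route. The paper works with \emph{arbitrary} $R\in RI(S)$ and $L\in LI(S)$: it feeds $(\chi_{R},\chi_{R}^{c})$ and $(\chi_{L},\chi_{L}^{c})$ into the hypothesis, deduces $R\cap L\subseteq R\Gamma L$ (the reverse inclusion being automatic), and then concludes regularity by citing an external characterization, Theorem 1 of Dutta--Adhikari, which asserts that $S$ is regular if and only if $R\Gamma L=R\cap L$ for all such $R,L$. You instead specialize to the principal one-sided ideals $R=\{a\}\cup a\Gamma S$ and $L=\{a\}\cup S\Gamma a$ of a fixed $a\in S$, derive $R\Gamma L=R\cap L$ for this pair, expand $R\Gamma L=a\Gamma a\cup a\Gamma S\Gamma a$, and extract $a=a\gamma_{1}x\gamma_{2}a$ directly, disposing of the residual case $a\in a\Gamma a$ by the iteration $a=a\gamma a=a\gamma a\gamma a\in a\Gamma S\Gamma a$. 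In effect you have inlined a proof of exactly the implication of the cited theorem that the paper leans on. What each approach buys: the paper's argument is shorter on the page but not self-contained, since the real content of the converse is delegated to the reference; yours is elementary and complete within the paper's own toolkit (Corollary 3.13 plus Definition 3.14), at the cost of the explicit principal-ideal bookkeeping and the case split, whose delicate half you correctly identified and resolved.
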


\begin{proof}
Let $S$ be a regular $\Gamma$-semigroup. Then by Proposition $3.19$, $A
\circ B\supseteq A\cap B$. Again by Proposition $3.17$,
$A\circ B\subseteq A\cap B.$ Hence $A\circ B=A\cap B.$

Conversely, let $S$ be a $\Gamma$-semigroup and $A\circ B
=A\cap B$ where $A=(\mu_{A},\nu_{A})$ is an $IFRI(S)$ and $B=(\mu_{B},\nu_{B})$ is an $IFLI(S).$ Let $L$ and $R$ be respectively a $LI(S)$ and a $RI(S)$ and $x\in R\cap L.$ Then $x\in R$ and $x\in L.$ Hence
$(\chi_{L}(x),\chi^{c}_{L}(x))=(\chi_{R}(x),\chi^{c}_{R}(x))=(1,0)($where $\chi_{L}(x)$ and $\chi_{R}(x)$ are
respectively the characteristic functions of $L$ and $R$$)$. Thus%
\[
(\chi_{R}\cap\chi_{L})(x)=\min\{\chi_{R}(x),\chi_{L}(x)\}=1 \text{ and }
(\chi^{c}_{R}\cup\chi^{c}_{L})(x)=\max\{\chi^{c}_{R}(x),\chi^{c}_{L}(x)\}=0.
\]
Now by Corollary $3.13$, $(\chi_{L},\chi^{c}_{L})$ and $(\chi_{R},\chi^{c}_{R})$ are respectively an $IFLI(S)$ and an $IFRI(S).$ Hence by hypothesis, $\chi_{R}%
\circ\chi_{L}=\chi_{R}\cap\chi_{L}$ and $\chi^{c}_{R}\circ\chi^{c}_{L}=\chi^{c}_{R}\cup\chi^{c}_{L}$. Hence%
\begin{align*}
(\chi_{R}\circ\chi_{L})(x)  &  =1\\
i.e.,\underset{x=y\gamma z}{\sup}[\min\{\chi_{R}(y),\chi_{L}(z)\}  &  :y,z\in
S;\gamma\in\Gamma]=1
\end{align*} and

\begin{align*}
(\chi^{c}_{R}\circ\chi^{c}_{L})(x)  &  =0\\
i.e.,\underset{x=y\gamma z}{\inf}[\max\{\chi^{c}_{R}(y),\chi^{c}_{L}(z)\}  &  :y,z\in
S;\gamma\in\Gamma]=0
\end{align*}

This implies that there exist some $r,s\in S$ and $\gamma_{1}\in\Gamma$ such
that $x=r\gamma_{1}s$ and $(\chi_{R}(r),\chi^{c}_{R}(r))=(1,0)=(\chi_{L}(s),\chi^{c}_{L}(s))$. Hence $r\in R$ and $s\in L.$ Hence $x\in R\Gamma L.$ Thus $R\cap L\subseteq R\Gamma L.$ Also
$R\Gamma L\subseteq R\cap L.$ Hence $R\Gamma L=R\cap L.$ Consequently, the
$\Gamma$-semigroup $S$ is regular$(cf$. Theorem $1\cite{D1}).$
\end{proof}


\section{Intuitionistic Fuzzy Prime and Intuitionistic Fuzzy \\Semiprime Ideals}

\begin{definition}
An $IFI(S),$ $A=(\mu_{A},\nu_{A})$ is called an $IFPI(S)$\footnote{$PI(S),$ $SPI(S)$, $IFPI(S)$, $IFSPI(S)$ respectively denote the prime ideal(s), semiprime ideal(s), intuitionistic fuzzy prime ideal(s), intuitionistic fuzzy semiprime ideal(s) of a $\Gamma$-semigroup $S.$} if $\underset{\gamma\in\Gamma}{\inf}$ $\mu_{A}(x\gamma y)=\max\{\mu_{A}(x),\mu_{A}(y)\}\forall x,y\in S$ and $\underset{\gamma\in\Gamma}{\sup}$ $\nu_{A}(x\gamma y)=\min\{\nu_{A}(x),\nu_{A}(y)\}\forall x,y\in S.$
\end{definition}

\begin{definition}
An $IFI(S),$ $A=(\mu_{A},\nu_{A})$ is called an $IFSPI(S)$ if $\mu_{A}(x)\geq\underset{\gamma\in\Gamma}{\inf}$ $\mu_{A}(x\gamma x)\forall x\in S$ and $\nu_{A}(x)\leq\underset{\gamma\in\Gamma}{\sup}$ $\nu_{A}(x\gamma x)\forall x\in S.$
\end{definition}

By routine verification  we obtain the following proposition and two subsequent lemmas.

\begin{proposition}
If $\{A_{i}\}_{i\in \Lambda}$ is a family of $IFPI(S)(IFSPI(S)),$ then $\underset{i\in\Lambda}\bigcap A_{i}$ is an $IFPI(S)(IFSPI(S)).$
\end{proposition}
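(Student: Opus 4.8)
The plan is to handle the two statements together, reducing each to the defining identity of Definition 4.1 (resp. 4.2) after first noting that the intersection is already an ideal. Write $A=\bigcap_{i\in\Lambda}A_{i}=(\mu_{A},\nu_{A})$; by the convention fixed in the preliminaries, $\mu_{A}=\inf_{i\in\Lambda}\mu_{A_{i}}$ and $\nu_{A}=\sup_{i\in\Lambda}\nu_{A_{i}}$. Since each $A_{i}$ is in particular an $IFI(S)$, Proposition 3.4 already makes $A$ an $IFI(S)$, so only the prime (resp. semiprime) condition on $\mu_{A},\nu_{A}$ remains. The single device I would use throughout is the interchange of iterated infima and suprema, $\inf_{\gamma}\inf_{i}=\inf_{i}\inf_{\gamma}$ and $\sup_{\gamma}\sup_{i}=\sup_{i}\sup_{\gamma}$.

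First I would dispose of the semiprime case, which I expect to be routine. For membership,
$$\inf_{\gamma\in\Gamma}\mu_{A}(x\gamma x)=\inf_{\gamma\in\Gamma}\inf_{i\in\Lambda}\mu_{A_{i}}(x\gamma x)=\inf_{i\in\Lambda}\inf_{\gamma\in\Gamma}\mu_{A_{i}}(x\gamma x)\leq\inf_{i\in\Lambda}\mu_{A_{i}}(x)=\mu_{A}(x),$$
the inequality being the $IFSPI$ property of each $A_{i}$. The order-dual computation, with $\sup$ for $\inf$ and $\geq$ for $\leq$, yields $\sup_{\gamma\in\Gamma}\nu_{A}(x\gamma x)\geq\nu_{A}(x)$. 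Thus $A$ meets Definition 4.2 and is an $IFSPI(S)$.

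For the prime case I would again move the infimum inside and invoke the $IFPI$ identity of each $A_{i}$:
$$\inf_{\gamma\in\Gamma}\mu_{A}(x\gamma y)=\inf_{i\in\Lambda}\inf_{\gamma\in\Gamma}\mu_{A_{i}}(x\gamma y)=\inf_{i\in\Lambda}\max\{\mu_{A_{i}}(x),\mu_{A_{i}}(y)\},$$
and dually $\sup_{\gamma\in\Gamma}\nu_{A}(x\gamma y)=\sup_{i\in\Lambda}\min\{\nu_{A_{i}}(x),\nu_{A_{i}}(y)\}$. It then remains to identify these right-hand sides with $\max\{\mu_{A}(x),\mu_{A}(y)\}=\max\{\inf_{i}\mu_{A_{i}}(x),\inf_{i}\mu_{A_{i}}(y)\}$ and $\min\{\nu_{A}(x),\nu_{A}(y)\}$ respectively.

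This identification is where I expect the real work, and I would flag it as the main obstacle. One direction is automatic: since $A$ is an $IFI(S)$ we always have $\mu_{A}(x\gamma y)\geq\max\{\mu_{A}(x),\mu_{A}(y)\}$, hence $\inf_{\gamma}\mu_{A}(x\gamma y)\geq\max\{\mu_{A}(x),\mu_{A}(y)\}$, and dually $\sup_{\gamma}\nu_{A}(x\gamma y)\leq\min\{\nu_{A}(x),\nu_{A}(y)\}$. The reverse inequality is exactly the distributive law $\inf_{i}\max\{a_{i},b_{i}\}=\max\{\inf_{i}a_{i},\inf_{i}b_{i}\}$ (together with its order dual), which holds trivially for a single index and for a pointwise comparable, e.g. chain-ordered, family, but can fail for a general family. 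I would therefore first verify it under the hypotheses actually in force, and, should the general case resist, pin down precisely the monotonicity or directedness assumption on $\{A_{i}\}_{i\in\Lambda}$ under which $\inf$ distributes over $\max$; that reduction is the heart of the argument, the ideal-theoretic content having already been absorbed into Proposition 3.4 and the per-ideal prime identities.
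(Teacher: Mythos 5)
Your semiprime argument is complete and correct: the interchange $\inf_{\gamma}\inf_{i}=\inf_{i}\inf_{\gamma}$ together with the $IFSPI$ inequality of each $A_{i}$ is all that is needed, and Proposition 3.4 supplies the ideal property. For the prime half, however, the obstacle you flag is not a removable technicality, and no hidden hypothesis in the paper rescues it: the paper offers no argument at all (the result is prefaced only by ``By routine verification we obtain the following proposition''), and the $IFPI$ clause of the statement is in fact \emph{false}. Concretely, take $S=\Gamma=\mathbb{Z}$ with both ternary operations given by multiplication of integers; this is a both-sided $\Gamma$-semigroup. The sets $P_{1}=2\mathbb{Z}$ and $P_{2}=3\mathbb{Z}$ are $PI(S)$ (by Theorem 2.1, since $x\Gamma y=xy\mathbb{Z}\subseteq 2\mathbb{Z}$ iff $2\mid xy$ iff $2\mid x$ or $2\mid y$), so by Corollary 4.11 the pairs $A_{1}=(\chi_{P_{1}},\chi_{P_{1}}^{c})$ and $A_{2}=(\chi_{P_{2}},\chi_{P_{2}}^{c})$ are $IFPI(S)$. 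Their intersection is $A=(\chi_{6\mathbb{Z}},\chi_{6\mathbb{Z}}^{c})$, and for $x=2$, $y=3$ one computes $\inf_{\gamma\in\Gamma}\mu_{A}(2\gamma 3)=\inf_{\gamma\in\mathbb{Z}}\chi_{6\mathbb{Z}}(6\gamma)=1$ while $\max\{\mu_{A}(2),\mu_{A}(3)\}=0$, so Definition 4.1 fails (equivalently: by Theorem 4.7 the level cut $U(\mu_{A};1)=6\mathbb{Z}$ would have to be prime, and it is not). This instantiates exactly the failure of $\inf_{i}\max\{a_{i},b_{i}\}\leq\max\{\inf_{i}a_{i},\inf_{i}b_{i}\}$ that you isolated, with $a_{1}=1,b_{1}=0$ and $a_{2}=0,b_{2}=1$, and it mirrors the classical fact that an intersection of prime ideals is in general only semiprime.

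So your refusal to assert the distributive law for an arbitrary family is precisely the right instinct, and the gap cannot be closed: the error is in the paper's statement, not in your reasoning. What does survive, and what you essentially already have, is the following: (i) the $IFSPI$ clause, which your first computation proves; (ii) an arbitrary intersection of $IFPI(S)$ is an $IFSPI(S)$ (since each $IFPI$ satisfies $\inf_{\gamma}\mu_{A_{i}}(x\gamma x)=\max\{\mu_{A_{i}}(x),\mu_{A_{i}}(x)\}=\mu_{A_{i}}(x)$, it is an $IFSPI$, and then your semiprime computation applies verbatim); and (iii) your chain-ordered variant, since for a family whose pairs $(\mu_{A_{i}}(x),\mu_{A_{i}}(y))$ are totally ordered the needed distributivity does hold. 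Proposition 4.3 should therefore be trusted only in its $IFSPI$ reading.
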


\begin{lemma}
If $A=(\mu_{A},\nu_{A})$ is an $IFPI(S)(IFSPI(S)),$ then so is $\square A=(\mu_{A},\mu^{c}_{A}).$
\end{lemma}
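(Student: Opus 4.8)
The plan is to reduce everything to the two defining conditions of $A$ together with the elementary facts that $\sup_{\gamma}\bigl(1-f(\gamma)\bigr)=1-\inf_{\gamma}f(\gamma)$ and $\min\{1-a,1-b\}=1-\max\{a,b\}$ for $a,b\in[0,1]$. Since $A=(\mu_{A},\nu_{A})$ is in particular an $IFI(S)$, Lemma $3.5$ already tells us that $\square A=(\mu_{A},\mu^{c}_{A})$ is an $IFI(S)$; thus it only remains to verify the prime (resp. semiprime) condition for the pair $(\mu_{A},\mu^{c}_{A})$.

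For the prime case, first observe that the membership component of $\square A$ is again $\mu_{A}$, so the condition $\inf_{\gamma\in\Gamma}\mu_{A}(x\gamma y)=\max\{\mu_{A}(x),\mu_{A}(y)\}$ is inherited verbatim from $A$. For the non-membership component I would compute, for all $x,y\in S$,
$$\sup_{\gamma\in\Gamma}\mu^{c}_{A}(x\gamma y)=\sup_{\gamma\in\Gamma}\bigl(1-\mu_{A}(x\gamma y)\bigr)=1-\inf_{\gamma\in\Gamma}\mu_{A}(x\gamma y)=1-\max\{\mu_{A}(x),\mu_{A}(y)\},$$
using the prime condition on $\mu_{A}$ at the last step. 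The $\min/\max$ duality then gives $1-\max\{\mu_{A}(x),\mu_{A}(y)\}=\min\{1-\mu_{A}(x),1-\mu_{A}(y)\}=\min\{\mu^{c}_{A}(x),\mu^{c}_{A}(y)\}$, which is exactly the identity required of the non-membership component. Hence $\square A$ is an $IFPI(S)$.

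For the semiprime case the membership condition $\mu_{A}(x)\geq\inf_{\gamma\in\Gamma}\mu_{A}(x\gamma x)$ is again inherited from $A$. For the non-membership component one has $\sup_{\gamma\in\Gamma}\mu^{c}_{A}(x\gamma x)=1-\inf_{\gamma\in\Gamma}\mu_{A}(x\gamma x)$, so the desired inequality $\mu^{c}_{A}(x)\leq\sup_{\gamma\in\Gamma}\mu^{c}_{A}(x\gamma x)$ is equivalent to $1-\mu_{A}(x)\leq 1-\inf_{\gamma\in\Gamma}\mu_{A}(x\gamma x)$, i.e. to $\mu_{A}(x)\geq\inf_{\gamma\in\Gamma}\mu_{A}(x\gamma x)$, which is precisely the semiprime condition already available by hypothesis. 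Thus $\square A$ is an $IFSPI(S)$.

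I expect no genuine obstacle here; the argument is a routine computation. The only point requiring a little care is the exchange of $\sup$ with the complement $1-(\cdot)$ and the dual passage between $\min$ and $\max$ under complementation, both valid pointwise over $[0,1]$. Once these are in place, the prime and semiprime conditions for $\mu^{c}_{A}$ collapse onto the corresponding conditions for $\mu_{A}$, which hold because $A$ is an $IFPI(S)$ (resp. $IFSPI(S)$).
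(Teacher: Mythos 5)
Your proof is correct and is precisely the routine verification the paper omits (the paper states this lemma follows ``by routine verification''): you invoke Lemma $3.5$ for the $IFI(S)$ part and reduce the prime (resp. semiprime) condition on $\mu^{c}_{A}$ to the one on $\mu_{A}$ via the dualities $\sup_{\gamma}\bigl(1-f(\gamma)\bigr)=1-\inf_{\gamma}f(\gamma)$ and $\min\{1-a,1-b\}=1-\max\{a,b\}$. Nothing further is needed.
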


\begin{lemma}
If $A=(\mu_{A},\nu_{A})$ is an $IFPI(S)(IFSPI(S)),$ then so is $\lozenge A=(\nu^{c}_{A},\nu_{A}).$
\end{lemma}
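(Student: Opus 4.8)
The plan is to split the statement into two independent pieces: the ideal part, which is already available from an earlier lemma, and the prime (resp. semiprime) part, which will follow by pushing the order-reversing map $t\mapsto 1-t$ through the hypotheses on $\nu_{A}$. First I would note that by Definitions $4.1$ and $4.2$ an $IFPI(S)$ (resp. $IFSPI(S)$) is in particular an $IFI(S)$, so Lemma $3.6$ immediately yields that $\lozenge A=(\nu^{c}_{A},\nu_{A})$ is an $IFI(S)$. It therefore remains only to verify the prime (resp. semiprime) defining conditions for $\lozenge A$, whose membership function is $\nu^{c}_{A}=1-\nu_{A}$ and whose non-membership function is $\nu_{A}$ itself.

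For the prime case I would dispose of the non-membership condition first: since the non-membership component of $\lozenge A$ is exactly $\nu_{A}$, the required identity $\underset{\gamma\in\Gamma}{\sup}\,\nu_{A}(x\gamma y)=\min\{\nu_{A}(x),\nu_{A}(y)\}$ is literally the second clause of the hypothesis that $A$ is an $IFPI(S)$, so nothing needs to be done. For the membership component I would compute, for fixed $x,y\in S$,
\begin{align*}
\underset{\gamma\in\Gamma}{\inf}\,\nu^{c}_{A}(x\gamma y)
&=\underset{\gamma\in\Gamma}{\inf}\bigl(1-\nu_{A}(x\gamma y)\bigr)
=1-\underset{\gamma\in\Gamma}{\sup}\,\nu_{A}(x\gamma y)\\
&=1-\min\{\nu_{A}(x),\nu_{A}(y)\}
=\max\{\nu^{c}_{A}(x),\nu^{c}_{A}(y)\},
\end{align*}
where the third equality invokes the $IFPI(S)$ hypothesis on $\nu_{A}$ and the last uses $1-\min\{a,b\}=\max\{1-a,1-b\}$. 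This is precisely the membership clause of Definition $4.1$ for $\lozenge A$.

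For the semiprime case the argument is parallel. The non-membership inequality $\nu_{A}(x)\leq\underset{\gamma\in\Gamma}{\sup}\,\nu_{A}(x\gamma x)$ is again the hypothesis verbatim, while the membership inequality is obtained by complementing it: from $\nu_{A}(x)\leq\underset{\gamma\in\Gamma}{\sup}\,\nu_{A}(x\gamma x)$ one gets $\nu^{c}_{A}(x)=1-\nu_{A}(x)\geq 1-\underset{\gamma\in\Gamma}{\sup}\,\nu_{A}(x\gamma x)=\underset{\gamma\in\Gamma}{\inf}\,\nu^{c}_{A}(x\gamma x)$, which is exactly the inequality demanded by Definition $4.2$ for $\lozenge A$.

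This lemma is essentially the dual of Lemma $4.4$, and no step is genuinely difficult; the only point requiring care is the interchange of the infimum with the complement, namely $\underset{\gamma\in\Gamma}{\inf}\bigl(1-f(\gamma)\bigr)=1-\underset{\gamma\in\Gamma}{\sup}\,f(\gamma)$, which is just the standard order-reversing behaviour of $t\mapsto 1-t$ on $[0,1]$. Everything else is a direct substitution of the hypotheses into the definitions.
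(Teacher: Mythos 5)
Your proof is correct and takes the route the paper intends: the paper gives no explicit argument for this lemma (it is stated as following ``by routine verification''), and your write-up is precisely that verification, correctly splitting off the $IFI(S)$ part via Lemma $3.6$ and checking the prime (resp.\ semiprime) clauses for $\lozenge A$ by passing the complement $t\mapsto 1-t$ through $\inf$ and $\sup$.
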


Combining Lemmas $4.4$ and $4.5$ we obtain the following theorem.

\begin{theorem}
$A=(\mu_{A},\nu_{A})$ is an $IFPI(S)(IFSPI(S)),$ if and only if $\square A$ and $\lozenge A$ are $IFPI(S)(IFSPI(S)).$
\end{theorem}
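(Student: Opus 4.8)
The plan is to read the biconditional as two separate implications, of which the forward one is nothing but the content of Lemmas $4.4$ and $4.5$ already established: if $A=(\mu_{A},\nu_{A})$ is an $IFPI(S)$ (resp. $IFSPI(S)$), then $\square A$ and $\lozenge A$ are $IFPI(S)$ (resp. $IFSPI(S)$). So the only real work lies in the converse, and the guiding observation is that $\square A=(\mu_{A},\mu^{c}_{A})$ and $\lozenge A=(\nu^{c}_{A},\nu_{A})$ each carry a ``doubled'' copy of just one of the two membership functions of $A$. Consequently the prime (resp. semiprime) requirements of Definitions $4.1$ and $4.2$ for $A$ can be recovered componentwise from the corresponding requirements for $\square A$ and $\lozenge A$.

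For the converse I would first secure the ideal part. Since every $IFPI(S)$ (resp. $IFSPI(S)$) is by definition an $IFI(S)$, the hypothesis that $\square A$ and $\lozenge A$ are $IFPI(S)$ (resp. $IFSPI(S)$) gives in particular that $\square A$ and $\lozenge A$ are $IFI(S)$; Theorem $3.7$ then yields that $A=(\mu_{A},\nu_{A})$ is itself an $IFI(S)$. This disposes of the ideal prerequisite embedded in Definitions $4.1$ and $4.2$, leaving only the prime (resp. semiprime) relations to verify.

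Next I would extract those relations one function at a time. Applying the defining equality of Definition $4.1$ to the first coordinate $\mu_{A}$ of the $IFPI(S)$ $\square A=(\mu_{A},\mu^{c}_{A})$ gives $\underset{\gamma\in\Gamma}{\inf}\,\mu_{A}(x\gamma y)=\max\{\mu_{A}(x),\mu_{A}(y)\}$ for all $x,y\in S$, which is precisely the $\mu$-condition required of $A$. Symmetrically, applying the defining equality to the second coordinate $\nu_{A}$ of the $IFPI(S)$ $\lozenge A=(\nu^{c}_{A},\nu_{A})$ gives $\underset{\gamma\in\Gamma}{\sup}\,\nu_{A}(x\gamma y)=\min\{\nu_{A}(x),\nu_{A}(y)\}$ for all $x,y\in S$, the $\nu$-condition required of $A$. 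Combined with the ideal property from the previous step, these two equalities show that $A=(\mu_{A},\nu_{A})$ is an $IFPI(S)$. The semiprime case is handled identically, reading the inequality $\mu_{A}(x)\geq\underset{\gamma\in\Gamma}{\inf}\,\mu_{A}(x\gamma x)$ off the first coordinate of $\square A$ and $\nu_{A}(x)\leq\underset{\gamma\in\Gamma}{\sup}\,\nu_{A}(x\gamma x)$ off the second coordinate of $\lozenge A$.

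I expect no genuine obstacle here: the argument is purely a matter of matching coordinates. The single point demanding care is the implicit $IFI$ prerequisite built into the definitions of $IFPI(S)$ and $IFSPI(S)$, which is exactly why the appeal to Theorem $3.7$ in the second step cannot be skipped.
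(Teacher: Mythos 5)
Your proof is correct, and your forward direction coincides with the paper's entire written argument: the paper's ``proof'' of this theorem is the single line ``Combining Lemmas $4.4$ and $4.5$ we obtain the following theorem,'' which strictly speaking yields only the implication from $A$ to $\square A$ and $\lozenge A$. The converse --- the only part requiring real work, as you observe --- is left entirely implicit in the paper, and your treatment of it is sound: reading the $\mu$-condition of Definition $4.1$ (resp.\ $4.2$) off the first coordinate of $\square A=(\mu_{A},\mu^{c}_{A})$ and the $\nu$-condition off the second coordinate of $\lozenge A=(\nu^{c}_{A},\nu_{A})$ recovers exactly the two defining equalities (resp.\ inequalities) for $A$. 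Your insistence on first securing that $A$ is an $IFI(S)$ via Theorem $3.7$ is also the right instinct, since $IFPI(S)$ and $IFSPI(S)$ are by Definitions $4.1$ and $4.2$ only defined for $IFI(S)$, and the coordinate matching alone does not deliver that prerequisite. So what your proof buys is completeness: it documents the converse that the paper merely asserts. One small remark: Theorem $3.7$ is itself justified in the paper by the same one-line ``combining'' device, so if you wanted a fully self-contained argument you could bypass it and read the ideal inequalities $\mu_{A}(x\gamma y)\geq\mu_{A}(y)$ and $\nu_{A}(x\gamma y)\leq\nu_{A}(y)$ directly off the respective coordinates of $\square A$ and $\lozenge A$ in exactly the same componentwise fashion; but citing the stated Theorem $3.7$ is perfectly legitimate as the paper stands.
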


\begin{theorem}
If $A=(\mu_{A},\nu_{A})$ is an $IFPI(S)(IFSPI(S)),$ then the upper and lower level cuts $U(\mu_{A};t)$ and $L(\nu_{A};t)$ are $PI(S)(SPI(S))$ for every $t\in Im(\mu_{A})\cap Im(\nu_{A}).$
\end{theorem}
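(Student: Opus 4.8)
The plan is to reduce everything to two facts already available in the paper. First, an $IFPI(S)$ (resp. $IFSPI(S)$) is by Definitions $4.1$ and $4.2$ an $IFI(S)$, so Theorem $3.9$ already tells us that the cuts $U(\mu_{A};t)$ and $L(\nu_{A};t)$ are $I(S)$ for every $t\in Im(\mu_{A})\cap Im(\nu_{A})$ (and in particular nonempty for such $t$). Second, Theorem $2.1(3)$ gives the elementwise characterization: an $I(S)$ is $PI(S)$ iff $x\Gamma y\subseteq I$ forces $x\in I$ or $y\in I$, and is $SPI(S)$ iff $x\Gamma x\subseteq I$ forces $x\in I$. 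Thus the only thing to verify is that these level ideals satisfy the relevant implication, and the whole argument amounts to translating the defining $\inf$/$\sup$ equalities of $A$ into statements about the cuts.

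First I would treat the prime case for $U(\mu_{A};t)$. Fix $x,y\in S$ with $x\Gamma y\subseteq U(\mu_{A};t)$; by definition of the upper cut this says $\mu_{A}(x\gamma y)\geq t$ for every $\gamma\in\Gamma$, hence $\underset{\gamma\in\Gamma}{\inf}\,\mu_{A}(x\gamma y)\geq t$. The defining equality of an $IFPI(S)$, namely $\underset{\gamma\in\Gamma}{\inf}\,\mu_{A}(x\gamma y)=\max\{\mu_{A}(x),\mu_{A}(y)\}$, then gives $\max\{\mu_{A}(x),\mu_{A}(y)\}\geq t$, i.e. $\mu_{A}(x)\geq t$ or $\mu_{A}(y)\geq t$, which is exactly $x\in U(\mu_{A};t)$ or $y\in U(\mu_{A};t)$. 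By Theorem $2.1(3)$ this makes $U(\mu_{A};t)$ a $PI(S)$. The argument for $L(\nu_{A};t)$ is the mirror image: $x\Gamma y\subseteq L(\nu_{A};t)$ gives $\nu_{A}(x\gamma y)\leq t$ for all $\gamma$, so $\underset{\gamma\in\Gamma}{\sup}\,\nu_{A}(x\gamma y)\leq t$, and the $IFPI(S)$ equality $\underset{\gamma\in\Gamma}{\sup}\,\nu_{A}(x\gamma y)=\min\{\nu_{A}(x),\nu_{A}(y)\}$ yields $\nu_{A}(x)\leq t$ or $\nu_{A}(y)\leq t$, i.e. $x\in L(\nu_{A};t)$ or $y\in L(\nu_{A};t)$.

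The semiprime case runs along identical lines with $y$ replaced by $x$. If $x\Gamma x\subseteq U(\mu_{A};t)$ then $\underset{\gamma\in\Gamma}{\inf}\,\mu_{A}(x\gamma x)\geq t$, and the $IFSPI(S)$ inequality $\mu_{A}(x)\geq\underset{\gamma\in\Gamma}{\inf}\,\mu_{A}(x\gamma x)$ gives $\mu_{A}(x)\geq t$, so $x\in U(\mu_{A};t)$; dually, $x\Gamma x\subseteq L(\nu_{A};t)$ gives $\underset{\gamma\in\Gamma}{\sup}\,\nu_{A}(x\gamma x)\leq t$ and $\nu_{A}(x)\leq\underset{\gamma\in\Gamma}{\sup}\,\nu_{A}(x\gamma x)$ forces $x\in L(\nu_{A};t)$. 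The semiprime half of Theorem $2.1$ then certifies both cuts as $SPI(S)$.

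I expect no serious obstacle here; the content lies entirely in recognizing that the set-containment condition $x\Gamma y\subseteq I$ is the same as the uniform bound $\mu_{A}(x\gamma y)\geq t$ (resp. $\nu_{A}(x\gamma y)\leq t$) for all $\gamma\in\Gamma$, which is precisely what ties it to the $\inf$/$\sup$ in Definitions $4.1$ and $4.2$. The one point to keep straight is the direction of the inequalities for the non-membership function $\nu_{A}$ (a $\sup$ paired with a $\min$, and $\leq t$ throughout), which is why I would carry out the $\mu_{A}$ and $\nu_{A}$ computations explicitly rather than invoke symmetry. It is also worth noting at the outset that the hypothesis $t\in Im(\mu_{A})\cap Im(\nu_{A})$ is exactly what Theorem $3.9$ needs to guarantee the cuts are ideals, so no additional case analysis is required.
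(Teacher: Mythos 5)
Your proposal is correct and follows essentially the same route as the paper's proof: translate $x\Gamma y\subseteq U(\mu_{A};t)$ (resp. $\subseteq L(\nu_{A};t)$) into the uniform bound on $\mu_{A}(x\gamma y)$ (resp. $\nu_{A}(x\gamma y)$), apply the defining $\inf$/$\sup$ equalities of Definition $4.1$ (or the inequalities of Definition $4.2$), and conclude via the elementwise prime/semiprime criterion. You are in fact slightly more careful than the paper, which leaves the appeals to Theorem $3.9$ and Theorem $2.1(3)$ implicit and only sketches the semiprime case.
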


\begin{proof}
Let $A=(\mu_{A},\nu_{A})$ be an $IFPI(S)$ and $t\in Im(\mu_{A})\cap Im(\nu_{A}).$ Let $x,y\in S$,$\gamma \in \Gamma$  and $x\gamma y\in U(\mu_{A};t).$ Then $\mu_{A}(x\gamma y)\geq t\forall\gamma\in\Gamma.$ So $\underset{\gamma\in\Gamma}{\inf}$ $\mu_{A}(x\gamma y)\geq t.$ It follows that $\max\{\mu_{A}(x),\mu_{A}(y)\}\geq t.$ So $\mu_{A}(x)\geq t$ or $\mu_{A}(y)\geq t.$ Hence $x\in U(\mu_{A};t)$ or $y\in U(\mu_{A};t).$ Hence $U(\mu_{A},t)$ is a $PI(S).$

Again let $x,y\in S$ and $x\gamma y\in L(\nu_{A};t).$ Then $\nu_{A}(x\gamma y)\leq t\forall\gamma\in\Gamma.$ So $\underset{\gamma\in\Gamma}{\sup}$ $\nu_{A}(x\gamma y)\leq t.$ It follows that $\min\{\nu_{A}(x),\nu_{A}(y)\}\leq t.$ So $\nu_{A}(x)\leq t$ or $\nu_{A}(y)\leq t.$ Hence $x\in L(\nu_{A};t)$ or $y\in L(\nu_{A};t).$ Hence $L(\nu_{A},t)$ is a $PI(S).$ Similarly we can prove the other case also.
\end{proof}

\begin{theorem}
If $A=(\mu_{A},\nu_{A})$ is an $IFS(S)$ such that the non-empty sets $U(\mu_{A};t)$ and $L(\nu_{A};t)$ are $PI(S)(SPI(S)),$ for $t\in [0,1].$ Then $A=(\mu_{A},\nu_{A})$ is an $IFPI(S)(IFSPI\\(S)).$
\end{theorem}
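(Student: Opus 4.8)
The plan is to prove this as the converse of Theorem 4.7, handling the prime and the semiprime cases in parallel, and in each case establishing the defining equality of Definition 4.1 (resp. the defining inequality of Definition 4.2). I will write out the argument only for $\mu_A$ in the prime case; the $\nu_A$ part is dual and the semiprime case is analogous.

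First I would note that every $PI(S)$ (resp. $SPI(S)$) is in particular an $I(S)$, so the hypothesis says that all non-empty level cuts $U(\mu_A;t)$ and $L(\nu_A;t)$ are $I(S)$. Hence Theorem 3.9 (the converse for ideals) applies and yields that $A=(\mu_A,\nu_A)$ is an $IFI(S)$. This step is indispensable, since Definition 4.1 presupposes that $A$ is an $IFI(S)$ before one can call it an $IFPI(S)$. As a by-product, the ideal property supplies the easy halves of the target equalities for free: combining the $IFLI$ and $IFRI$ conditions gives $\mu_A(x\gamma y)\geq\max\{\mu_A(x),\mu_A(y)\}$ for every $\gamma\in\Gamma$, whence $\inf_{\gamma\in\Gamma}\mu_A(x\gamma y)\geq\max\{\mu_A(x),\mu_A(y)\}$, and dually $\sup_{\gamma\in\Gamma}\nu_A(x\gamma y)\leq\min\{\nu_A(x),\nu_A(y)\}$.

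The real content is the reverse inequality $\inf_{\gamma\in\Gamma}\mu_A(x\gamma y)\leq\max\{\mu_A(x),\mu_A(y)\}$, and this is where primeness enters. I would argue by contradiction: assume $\max\{\mu_A(x),\mu_A(y)\}<\inf_{\gamma\in\Gamma}\mu_A(x\gamma y)$ and choose a threshold $t\in[0,1]$ strictly between these two values. Then $\mu_A(x\gamma y)\geq t$ for every $\gamma$, i.e. $x\Gamma y\subseteq U(\mu_A;t)$; since $\Gamma\neq\emptyset$ this cut is non-empty (it contains $x\gamma y$), so the hypothesis makes $U(\mu_A;t)$ a $PI(S)$. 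The element-wise characterization of primeness, Theorem 1.1(3), then forces $x\in U(\mu_A;t)$ or $y\in U(\mu_A;t)$, i.e. $\max\{\mu_A(x),\mu_A(y)\}\geq t$, contradicting the choice of $t$. The dual argument using $L(\nu_A;s)$ with $s$ chosen just above $\sup_{\gamma\in\Gamma}\nu_A(x\gamma y)$ settles the $\nu_A$ equality, and together these give $A\in IFPI(S)$.

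For the semiprime case I would run the identical contradiction with $y$ replaced by $x$: if $\mu_A(x)<\inf_{\gamma\in\Gamma}\mu_A(x\gamma x)$, a threshold $t$ in between gives $x\Gamma x\subseteq U(\mu_A;t)$, and the semiprime form of Theorem 1.1(3) yields $x\in U(\mu_A;t)$, i.e. $\mu_A(x)\geq t$, which is absurd; the $\nu_A$ part is again dual. The one point I would be most careful about, and the crux of the proof, is the passage between the two formulations of primeness: the hypothesis delivers $PI(S)$ in the ideal-theoretic sense (the $A\Gamma B\subseteq P$ condition), while what the threshold argument actually consumes is the element-level implication $x\Gamma y\subseteq P\Rightarrow x\in P$ or $y\in P$. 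Bridging these two is precisely the role of the recalled Theorem 1.1, so the argument hinges on having that equivalence at hand; everything else is a routine selection of a separating threshold in $[0,1]$.
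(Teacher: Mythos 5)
Your proof is correct and follows essentially the same route as the paper: both reduce the problem to the level cut $U(\mu_A;t)$ at (or, in your variant, just below) $\inf_{\gamma\in\Gamma}\mu_A(x\gamma y)$, note that $x\Gamma y\subseteq U(\mu_A;t)$, invoke the element-wise characterization of primeness (the paper's recalled Theorem 2.1(3), which you call Theorem 1.1(3)) to get $\max\{\mu_A(x),\mu_A(y)\}\geq t$, and combine this with the reverse inequality coming from the $IFI(S)$ property; the dual and semiprime cases are handled identically in both. The only differences are cosmetic: the paper argues directly with $t=\inf_{\gamma\in\Gamma}\mu_A(x\gamma y)$ rather than by contradiction with a separating threshold, and your explicit appeal to the ideal-level-cut converse (which is Theorem 3.10, not 3.9) to justify that $A$ is an $IFI(S)$ is a step the paper's proof uses silently without justification, so your write-up is in fact slightly more careful on that point.
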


\begin{proof}
Let every non-empty upper level cut $U(\mu_{A};t)$ and $L(\nu_{A};t)$ are $PI(S).$ Let $x,y\in S.$ Let $\underset{\gamma\in\Gamma}{\inf}$ $\mu_{A}(x\gamma y)=t($we note here that $\mu_{A}(x\gamma y)\in [0,1]\forall\gamma\in\Gamma,\underset{\gamma\in\Gamma}{\inf}$ $\mu_{A}(x\gamma y)$ exists$).$ Then $\mu_{A}(x\gamma y)\geq t\forall\gamma\in\Gamma.$ So $x\gamma y\in U(\mu_{A};t)\forall\gamma\in\Gamma.$ So $U(\mu_{A};t)$ is non-empty and $x\Gamma y\subseteq U(\mu_{A};t).$ Since $U(\mu_{A};t)$ is a $PI(S),$ so $x\in U(\mu_{A};t)$ or $y\in U(\mu_{A};t).$ So $\mu_{A}(x)\geq t$ or $\mu_{A}(y)\geq t.$ So $\max\{\mu_{A}(x),\mu_{A}(y)\}\geq t,i.e.,\max\{\mu_{A}(x),\mu_{A}(y)\}\geq\underset{\gamma\in\Gamma}{\inf}$ $\mu_{A}(x\gamma y).....(1).$ Since $A=(\mu_{A},\nu_{A})$ is an $IFI(S),$ so $\mu_{A}(x\gamma y)\geq\max\{\mu_{A}(x),\mu_{A}(y)\}\forall\gamma\in\Gamma.$ Hence $\underset{\gamma\in\Gamma}{\inf}$ $\mu_{A}(x\gamma y)\geq\max\{\mu_{A}(x),\mu_{A}(y)\}......(2).$ Combining $(1)$ and $(2)$ we have $\underset{\gamma\in\Gamma}{\inf}$ $\mu_{A}(x\gamma y)=\max\{\mu_{A}(x),\mu_{A}(y)\}.$

Again let $x,y\in S.$ Let $\underset{\gamma\in\Gamma}{\sup}$ $\nu_{A}(x\gamma y)=t($we note here that $\nu_{A}(x\gamma y)\in [0,1]\forall\gamma\in\Gamma,\underset{\gamma\in\Gamma}{\sup}$ $\nu_{A}(x\gamma y)$ exists$).$ Then $\nu_{A}(x\gamma y)\leq t\forall\gamma\in\Gamma.$ So $x\gamma y\in L(\nu_{A};t)\forall\gamma\in\Gamma.$ So $L(\nu_{A};t)$ is non-empty and $x\Gamma y\subseteq L(\nu_{A};t).$ Since $L(\nu_{A};t)$ is a $PI(S),$ so $x\in L(\nu_{A};t)$ or $y\in L(\nu_{A};t).$ So $\nu_{A}(x)\leq t$ or $\nu_{A}(y)\leq t.$ So $\min\{\nu_{A}(x),\nu_{A}(y)\}\leq t,i.e.,\min\{\nu_{A}(x),\nu_{A}(y)\}\leq\underset{\gamma\in\Gamma}{\sup}$ $\nu_{A}(x\gamma y).....(3).$ Since $A=(\mu_{A},\nu_{A})$ is an $IFI(S),$ so $\nu_{A}(x\gamma y)\leq\min\{\nu_{A}(x),\nu_{A}(y)\}\forall\gamma\\\in\Gamma.$ Hence $\underset{\gamma\in\Gamma}{\sup}$ $\nu_{A}(x\gamma y)\leq\min\{\nu_{A}(x),\nu_{A}(y)\}......(4).$ Combining $(3)$ and $(4)$ we have $\underset{\gamma\in\Gamma}{\sup}$ $\nu_{A}(x\gamma y)=\min\{\nu_{A}(x),\nu_{A}(y)\}.$
Hence $A=(\mu_{A},\nu_{A})$ is an $IFPI(S).$ Similarly we can prove the other case also.
\end{proof}

By routine verification we obtain the following theorem.

\begin{theorem}
Let $I$ be a non-empty subset of $S.$ If two fuzzy
subsets $\mu$ and $\nu$ are defined on $S$ by $$\mu (x):=\left\{
\begin{array}{l}
\alpha _{0}\text{ \ if }x\in I \\
\alpha _{1}\text{ \ if }x\in S-I%
\end{array}%
\right. $$ and $$\nu (x):=\left\{
\begin{array}{l}
\beta _{0}\text{ \ if }x\in I \\
\beta _{1}\text{ \ if }x\in S-I%
\end{array}%
\right. $$
where $0\leq\alpha _{1}<\alpha _{0}, 0\leq\beta _{0}<\beta _{1}$ and $\alpha_{i}+\beta_{i}\leq 1$ for $i=0,1.$  Then $A=(\mu,\nu)$ is an $IFPI(S)(IFSPI(S)).$
\end{theorem}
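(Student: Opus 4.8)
The plan is to prove that $A=(\mu,\nu)$ is an $IFPI(S)$ (the $IFSPI(S)$ case being analogous) by verifying the two defining equalities of Definition $4.1$ directly. First I would record a preliminary fact that makes the whole argument uniform: by hypothesis $I$ is an $I(S)$, so whenever $x\in I$ or $y\in I$ we automatically have $x\gamma y\in I$ for every $\gamma\in\Gamma$, and hence $\mu(x\gamma y)=\alpha_0$ and $\nu(x\gamma y)=\beta_0$. The key extra structural input I would isolate is the contrapositive: if $x\gamma y\in I$ for \emph{some} $\gamma$, then, since $I$ is prime, $x\in I$ or $y\in I$. This is exactly where primality of $I$ enters, and it is the step that distinguishes this theorem from the earlier Theorem $3.12$.

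With that preparation, the proof splits into the two cases according to whether $\max\{\mu(x),\mu(y)\}$ equals $\alpha_0$ or $\alpha_1$. If at least one of $x,y$ lies in $I$, then $\max\{\mu(x),\mu(y)\}=\alpha_0$, and by the preliminary fact $\mu(x\gamma y)=\alpha_0$ for all $\gamma$, whence $\inf_{\gamma\in\Gamma}\mu(x\gamma y)=\alpha_0=\max\{\mu(x),\mu(y)\}$. If instead $x\notin I$ and $y\notin I$, then $\max\{\mu(x),\mu(y)\}=\alpha_1$; by primality (the contrapositive above) no product $x\gamma y$ can lie in $I$, so $\mu(x\gamma y)=\alpha_1$ for every $\gamma$, giving $\inf_{\gamma\in\Gamma}\mu(x\gamma y)=\alpha_1=\max\{\mu(x),\mu(y)\}$. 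In both cases the membership equality of Definition $4.1$ holds.

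The non-membership equality is handled by the same case split, using that the order on the $\beta_i$ is reversed ($\beta_0<\beta_1$) so that the relevant aggregate is a supremum. If $x\in I$ or $y\in I$, then $\min\{\nu(x),\nu(y)\}=\beta_0$ and every $\nu(x\gamma y)=\beta_0$, so $\sup_{\gamma\in\Gamma}\nu(x\gamma y)=\beta_0$. If $x\notin I$ and $y\notin I$, primality forces $x\gamma y\notin I$ for all $\gamma$, so $\nu(x\gamma y)=\beta_1=\min\{\nu(x),\nu(y)\}$, whence the supremum is $\beta_1$. This completes the verification that $A$ is an $IFPI(S)$. I would also note at the outset that $A$ is genuinely an $IFI(S)$ by the already-established Theorem $3.12$, so the only new content is the two prime-specific equalities; and that the conditions $\alpha_i+\beta_i\le 1$ guarantee $A$ is a legitimate $IFS$.

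The only delicate point — the ``main obstacle'' — is the direction $x\notin I,\,y\notin I \Rightarrow x\gamma y\notin I$, which is precisely primality read through condition $(3)$ of Theorem $2.1$ (namely $x\Gamma y\subseteq I \Rightarrow x\in I$ or $y\in I$). If one were tempted to use only that $I$ is an ideal, this implication would fail, and the infimum/supremum could collapse to the wrong value; so I would make sure the write-up explicitly invokes primality here rather than merely the ideal property. For the $IFSPI(S)$ case I would repeat the argument with $y=x$, using characterization $(3)$ in the form $x\Gamma x\subseteq I \Rightarrow x\in I$, which yields the one-sided inequalities $\mu(x)\ge \inf_\gamma \mu(x\gamma x)$ and $\nu(x)\le \sup_\gamma \nu(x\gamma x)$ required by Definition $4.2$.
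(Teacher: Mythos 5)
Your overall architecture is the right one, and it is essentially what the paper intends: the paper gives no written proof of this theorem (it is introduced with ``by routine verification''), and the routine verification is exactly your plan --- note that $A$ is an $IFI(S)$ by Theorem $3.12,$ then check the equalities of Definition $4.1$ by splitting into the cases ``$x\in I$ or $y\in I$'' (handled by the ideal property, since then $x\gamma y\in I$ for every $\gamma$) and ``$x\notin I$ and $y\notin I$'' (where primality must enter). You are also right to read the hypothesis ``non-empty subset'' as ``$I$ is a $PI(S)$ (resp. $SPI(S)$)''; that is clearly the paper's intent, just as Theorem $3.12$ silently assumes $I$ is a $LI(S)(RI(S),I(S)),$ and it is what Corollary $4.10$ requires.

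However, the step you yourself flag as the crux is stated incorrectly. You claim ``if $x\gamma y\in I$ for \emph{some} $\gamma,$ then, since $I$ is prime, $x\in I$ or $y\in I$,'' and you use it in the form ``$x\notin I$ and $y\notin I$ implies $x\gamma y\notin I$ for \emph{every} $\gamma$.'' That is not condition $(3)$ of Theorem $2.1$: condition $(3)$ reads ``$x\Gamma y\subseteq I$ (i.e. $x\gamma y\in I$ for \emph{all} $\gamma$) $\Rightarrow x\in I$ or $y\in I$,'' and its contrapositive yields only the \emph{existence} of some $\gamma$ with $x\gamma y\notin I.$ What you wrote is the ``completely prime'' condition, which is strictly stronger and false for prime ideals in general. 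For instance, let $S=\Gamma$ be the multiplicative semigroup of $2\times 2$ matrices over a field, with all products the matrix product; then $I=\{0\}$ is a $PI(S)$ (if $x\neq 0\neq y,$ then $xE_{ij}y\neq 0$ for suitable matrix units $E_{ij}$), yet $E_{12}E_{11}E_{12}=0\in I$ while $E_{12}\notin I.$ Fortunately your proof is repairable without any structural change: since $\mu$ takes only the two values $\alpha_{0}>\alpha_{1},$ the required equality $\underset{\gamma\in\Gamma}{\inf}\,\mu(x\gamma y)=\alpha_{1}$ does not need all products to avoid $I$ --- it needs only \emph{one} $\gamma$ with $x\gamma y\notin I,$ which is exactly what the correct contrapositive supplies; likewise $\underset{\gamma\in\Gamma}{\sup}\,\nu(x\gamma y)=\beta_{1}$ needs only one product outside $I.$ The same correction applies verbatim to your semiprime argument with $y=x.$ So replace ``no product $x\gamma y$ can lie in $I$'' by ``some product $x\gamma y$ lies outside $I$'' and your proof is complete.
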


Following result is the characteristic function criterion of an $IFPI(S)(IFSPI(S))$ which follows as an easy consequence of the above theorem.

\begin{corollary}
Let $\chi_{P}$ be the characteristic function of a $PI(S)(SPI(S)),$ $P.$ Then $I=(\chi_{P},\chi_{P}^{c})$ is $IFPI(S)(IFSPI(S)).$
\end{corollary}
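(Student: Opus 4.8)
The plan is to obtain the corollary as a direct specialization of Theorem $4.10$, exactly as Corollary $3.13$ was deduced from Theorem $3.11$. The key observation is that the characteristic function $\chi_{P}$ and its complement $\chi_{P}^{c}=1-\chi_{P}$ are precisely the two-valued step functions appearing in the hypothesis of Theorem $4.10$, for a suitable choice of the four parameters $\alpha_{0},\alpha_{1},\beta_{0},\beta_{1}$.

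First I would set $I:=P$ in Theorem $4.10$ and read off the parameters. Since $\chi_{P}(x)=1$ for $x\in P$ and $\chi_{P}(x)=0$ for $x\in S-P$, I take $\alpha_{0}=1$ and $\alpha_{1}=0$. Since $\chi_{P}^{c}(x)=1-\chi_{P}(x)$ equals $0$ for $x\in P$ and $1$ for $x\in S-P$, I take $\beta_{0}=0$ and $\beta_{1}=1$. Thus the pair $(\chi_{P},\chi_{P}^{c})$ is exactly the pair $(\mu,\nu)$ of Theorem $4.10$ associated with these values.

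Next I would check that these parameters satisfy the three constraints demanded by Theorem $4.10$. The strict orderings hold, since $0\leq\alpha_{1}=0<\alpha_{0}=1$ and $0\leq\beta_{0}=0<\beta_{1}=1$. Moreover $\alpha_{0}+\beta_{0}=1+0=1\leq 1$ and $\alpha_{1}+\beta_{1}=0+1=1\leq 1$, so $\alpha_{i}+\beta_{i}\leq 1$ for $i=0,1$. Hence all hypotheses of Theorem $4.10$ are met with $P$ playing the role of $I$.

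Applying Theorem $4.10$ with $P$ a $PI(S)$ (resp.\ $SPI(S)$) then yields immediately that $I=(\chi_{P},\chi_{P}^{c})$ is an $IFPI(S)$ (resp.\ $IFSPI(S)$), which is the assertion. There is essentially no obstacle here: the only content is the routine identification of $\chi_{P}$ and $\chi_{P}^{c}$ as the step functions of Theorem $4.10$ together with the trivial verification of the parameter inequalities, all the substantive work having already been carried out in Theorem $4.10$.
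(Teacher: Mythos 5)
Your proposal is correct and is exactly the paper's intended argument: the paper derives this corollary as an immediate specialization of the preceding step-function theorem, with precisely the choice $\alpha_{0}=1,\ \alpha_{1}=0,\ \beta_{0}=0,\ \beta_{1}=1$ that you make, and your verification of the parameter constraints is right. The only slip is in your cross-references: the step-function theorem you invoke is Theorem $4.9$ (whose Section $3$ analogue is Theorem $3.12$, from which Corollary $3.13$ follows), not Theorem $4.10$ (resp.\ Theorem $3.11$).
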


\section{Intuitionistic Fuzzy Ideal Extension}

\begin{definition}
Let $S$ be a $\Gamma$-semigroup, $A=(\mu_{A},\nu_{A})$ be an $IFS(S)$ and $x\in S,$ then the $IFS(S),$ $<x,A>=\{(y,<x,\mu_{A}>(y),<x,\nu_{A}>(y)):x\in S\}$ where the functions $<x,\mu_{A}>:S\rightarrow [0,1]$ and $<x,\nu_{A}>:S\rightarrow [0,1]$ defined by $<x,\mu_{A}>(y):=\underset{\gamma\in\Gamma}{\inf}$ $\mu_{A}(x\gamma y)$ and $<x,\nu_{A}>(y):=\underset{\gamma\in\Gamma}{\sup}$ $\nu_{A}(x\gamma y)$ is called the $IFE(A)$\footnote{$IFE(A)$ denote the intuitionistic fuzzy extension of an $IFS$, $A=(\mu_{A},\nu_{A})$ of a $\Gamma$-semigroup $S.$} by $x.$
\end{definition}

\begin{example}
Let $S$ be the set of all non-positive integers and $\Gamma $ be the set of all non-positive even integers. Then $S$ is a $\Gamma $-semigroup where $a\gamma b$ and $\alpha a\beta$ denote the usual multiplication of integers $a,\gamma ,b$ and $\alpha,a,\beta$ respectively with $a,b\in S$ and $\alpha,\beta,\gamma \in \Gamma .$ Let $A=(\mu_{A},\nu_{A}) $ be an $IFS$ of $S,$ defined
as follows
$$
\mu_{A} (x)=\left\{
\begin{array}{ll}
1 & \text{if} \ x=0 \\
0.1 & \text{if} \ x=-1,-2 \\
0.2& \text{if} \ x<-2
\end{array}
\right.
$$
$$
\nu_{A} (x)=\left\{
\begin{array}{ll}
0 & \text{if} \ x=0 \\
0.6 & \text{if} \ x=-1,-2 \\
0.7 & \text{if} \ x<-2
\end{array}
\right. .
$$
For $x=0\in S,<x,\mu_{A}>(y)=1$ and $<x,\nu_{A}>(y)=0\forall y\in S.$ For all other $x\in S,<x,\mu_{A}>(y)=0.1$ and $<x,\nu_{A}>(y)=0.7\forall y\in S.$ Then the $IFS,$ $<x,A>=(<x,\mu_{A}>,<x,\nu_{A}>) $ of $S$ is an $IFE(A)$ by $x.$
\end{example}

\begin{proposition}
Let $A=(\mu_{A},\nu_{A})$ be an $IFI(S),$ where $S$ is commutative and $x\in S.$ Then $<x,A>=(<x,\mu_{A}>,<x,\nu_{A}>)$ is an $IFI(S).$
\end{proposition}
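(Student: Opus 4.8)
The plan is to verify directly that $<x,A>=(<x,\mu_{A}>,<x,\nu_{A}>)$ satisfies the defining conditions of an $IFLI(S)$ given in Definition $3.1$; by the standing convention of the paper the $IFRI(S)$ part then follows by an analogous argument, and the two together yield that $<x,A>$ is an $IFI(S)$. Throughout I would use the commutativity of $S$, which I read as $a\alpha b=b\alpha a$ for all $a,b\in S$ and $\alpha\in\Gamma$, together with the associative laws of a $\Gamma$-semigroup and the fact that $A$, being an $IFI(S)$, is simultaneously an $IFLI(S)$ and an $IFRI(S)$.

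First I would fix arbitrary $p,q\in S$ and $\delta\in\Gamma$ and compare $<x,\mu_{A}>(p\delta q)$ with $<x,\mu_{A}>(q)$. The load-bearing computation is to rewrite, for each $\gamma\in\Gamma$, the element $x\gamma(p\delta q)$: by commutativity $p\delta q=q\delta p$, and then by associativity $x\gamma(q\delta p)=(x\gamma q)\delta p$. Since $A$ is an $IFRI(S)$, condition $(1)$ of Definition $3.2$ applied to the element $x\gamma q$ gives $\mu_{A}((x\gamma q)\delta p)\geq\mu_{A}(x\gamma q)$, so that $\mu_{A}(x\gamma(p\delta q))\geq\mu_{A}(x\gamma q)$ for every $\gamma$. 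Taking the infimum over $\gamma\in\Gamma$ on both sides, and using that a termwise inequality is preserved under $\inf$, yields $<x,\mu_{A}>(p\delta q)\geq<x,\mu_{A}>(q)$, which is the membership condition.

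For the non-membership condition I would run the mirror-image argument with $\nu_{A}$ and $\sup$: from $x\gamma(p\delta q)=(x\gamma q)\delta p$ and condition $(2)$ of the $IFRI(S)$ property I obtain $\nu_{A}(x\gamma(p\delta q))\leq\nu_{A}(x\gamma q)$ for each $\gamma$, and taking the supremum over $\gamma$ gives $<x,\nu_{A}>(p\delta q)\leq<x,\nu_{A}>(q)$. Together with the previous paragraph this shows $<x,A>$ is an $IFLI(S)$. I would then remark that the companion $IFRI(S)$ conditions, namely $<x,\mu_{A}>(p\delta q)\geq<x,\mu_{A}>(p)$ and $<x,\nu_{A}>(p\delta q)\leq<x,\nu_{A}>(p)$, follow even more directly: here one writes $x\gamma(p\delta q)=(x\gamma p)\delta q$ and applies the $IFRI(S)$ inequalities of $A$ to the element $x\gamma p$, with no appeal to commutativity needed. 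Hence $<x,A>$ is indeed an $IFI(S)$.

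The only genuinely delicate step, and the sole place where commutativity is used, is the rearrangement $x\gamma(p\delta q)=(x\gamma q)\delta p$: it is what lets me peel off the factor $p$ on the right so as to invoke the right-ideal inequality for $A$, while keeping the probe $x\gamma q$ intact so that the infimum and supremum defining the extension line up. One should check that without commutativity the weaker bound $\mu_{A}(x\gamma(p\delta q))\geq\mu_{A}(q)$ furnished by the left-ideal property of $A$ does not suffice, since $\mu_{A}(q)\leq\mu_{A}(x\gamma q)$ in general; this is why the hypothesis is essential for the left-ideal direction. Everything else is routine: the preservation of inequalities under $\inf$ and $\sup$, and the fact that $<x,A>$ is a legitimate $IFS(S)$, which holds because $\mu_{A}(x\gamma y)+\nu_{A}(x\gamma y)\leq1$ for each $\gamma$ forces $<x,\mu_{A}>(y)+<x,\nu_{A}>(y)\leq1$ after passing to $\inf$ and $\sup$.
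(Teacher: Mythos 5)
Your proposal is correct and takes essentially the same route as the paper: both arguments reduce to the regrouping $x\gamma p\delta q=(x\gamma p)\delta q$ (or $(x\gamma q)\delta p$ after commuting $p\delta q=q\delta p$), the right-ideal property of $A$, and the preservation of termwise inequalities under $\inf$ and $\sup$. The only difference is organizational: the paper verifies the $IFRI(S)$ property of $<x,A>$ first and then invokes commutativity once to upgrade to $IFI(S)$, whereas you unfold that same commutativity swap directly inside the left-ideal computation.
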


\begin{proof}
Let $A=(\mu_{A},\nu_{A})$ be an $IFI(S)$ where $S$ a commutative $\Gamma$-semigroup $S$ and $p,q\in S,\beta\in\Gamma.$ Then
$$
\begin{array}{ll}
<x,\mu_{A}>(p\beta q)& =\underset{\gamma\in\Gamma}{\inf}\mu_{A}(x\gamma p\beta q)\geq\underset{\gamma\in\Gamma}{\inf}\mu_{A}(x\gamma p)=<x,\mu_{A}>(p)
.
\end{array}
$$
Again
$$
\begin{array}{ll}
<x,\nu_{A}>(p\beta q)& =\underset{\gamma\in\Gamma}{\sup}\nu_{A}(x\gamma p\beta q)\leq\underset{\gamma\in\Gamma}{\sup}\nu_{A}(x\gamma p)=<x,\nu_{A}>(p)
.
\end{array}
$$
Thus $<x,A>$ is an $IFRI(S)$ Hence $S$ being commutative $<x,A>$ is an $IFI(S).$
\end{proof}

\begin{remark}
Commutativity of $\Gamma$-semigroup $S$ is not required to prove that $<x,A>$ is an $IFRI(S)$ when $A$ is an $IFRI(S).$
\end{remark}

\begin{proposition}
Let $A=(\mu_{A},\nu_{A})$ be an $IFPI(S)(IFSPI(S))$ where $S$ is commutative and $x\in S.$ Then $<x,A>=(<x,\mu_{A}>,<x,\nu_{A}>)$ is an $IFPI(S)(IFSPI(S)).$
\end{proposition}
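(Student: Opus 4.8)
The plan is to verify directly that $\langle x,A\rangle=(\langle x,\mu_A\rangle,\langle x,\nu_A\rangle)$ satisfies the defining identities of an $IFPI(S)$ in Definition $4.1$ (resp. an $IFSPI(S)$ in Definition $4.2$), using the already-established Proposition $5.3$ that $\langle x,A\rangle$ is an $IFI(S)$ when $S$ is commutative. I would treat the prime case first. Fix $p,q\in S$ and aim to show
\[
\underset{\beta\in\Gamma}{\inf}\langle x,\mu_A\rangle(p\beta q)=\max\{\langle x,\mu_A\rangle(p),\langle x,\mu_A\rangle(q)\},
\]
together with the dual supremum identity for $\nu_A$. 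Unwinding the definition of the extension, the left side is $\inf_{\beta}\inf_{\gamma}\mu_A(x\gamma(p\beta q))$, a double infimum over $\Gamma$; the right side is $\max\{\inf_{\gamma}\mu_A(x\gamma p),\inf_{\gamma}\mu_A(x\gamma q)\}$.

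The key move is to exploit commutativity of $S$ to rewrite $x\gamma(p\beta q)$ so that the primeness identity of $\mu_A$ becomes applicable. First I would use associativity and commutativity to express $x\gamma p\beta q$ as a product of the form $u\,\delta\,q$ (or $p\,\delta\,v$), so that applying the prime identity $\inf_{\delta}\mu_A(u\delta q)=\max\{\mu_A(u),\mu_A(q)\}$ to the inner variable peels off one factor; concretely, since $S$ is commutative one can shuffle the element $x$ past $p$ and $q$ and regroup. Taking the infimum over the remaining $\Gamma$-indices, the $\max$ distributes over $\inf$ in the sense that $\inf$ of a $\max$ with one fixed coordinate reduces to $\max\{\inf_\gamma\mu_A(x\gamma p),\inf_\gamma\mu_A(x\gamma q)\}$, which is exactly $\max\{\langle x,\mu_A\rangle(p),\langle x,\mu_A\rangle(q)\}$. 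The $\nu_A$ identity is handled symmetrically, replacing $\inf$ by $\sup$, $\max$ by $\min$, and reversing inequalities throughout. The semiprime case is the specialization $p=q$: one shows $\langle x,\mu_A\rangle(p)\ge\inf_{\beta}\langle x,\mu_A\rangle(p\beta p)$ and dually, which follows from the same regrouping applied with both factors equal, invoking the $IFSPI$ identity of $A$ rather than the $IFPI$ identity.

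The main obstacle I anticipate is the interchange of the two infima (over $\beta$ and over $\gamma$) with the $\max$, and ensuring the regrouping $x\gamma(p\beta q)=(\text{something})\,\delta\,q$ produces a product whose second $\Gamma$-factor ranges over all of $\Gamma$ as $\beta,\gamma$ vary, so that no infimum is taken over a proper subfamily. This is precisely where commutativity of $S$ is indispensable: it lets the stray element $x$ be absorbed symmetrically on either side, so that the prime identity of $\mu_A$ can be applied cleanly to isolate $p$ or $q$. I would verify one inequality of the identity by a direct substitution witnessing the $\max$ (choosing a suitable $\delta$) and the reverse inequality from the fact that $A$ is an $IFI(S)$, so that the monotonicity bounds $\langle x,\mu_A\rangle(p\beta q)\ge\langle x,\mu_A\rangle(p)$ and $\ge\langle x,\mu_A\rangle(q)$ already give $\inf_\beta\langle x,\mu_A\rangle(p\beta q)\ge\max\{\cdots\}$, mirroring the two-sided argument used in Theorem $4.9$.
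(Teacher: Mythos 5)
Your handling of the prime case is sound and is essentially the paper's own argument: first invoke the extension-of-$IFI$ proposition (this is the \emph{only} place commutativity is used), then evaluate $\inf_{\beta}\langle x,\mu_{A}\rangle(p\beta q)=\inf_{\beta}\inf_{\gamma}\mu_{A}(x\gamma(p\beta q))$ by applying the prime equality of $A$ to the pair $(x,\,p\beta q)$, interchanging the remaining infimum with the $\max$, applying the prime equality again to $(p,q)$, and regrouping maxima into $\max\{\langle x,\mu_{A}\rangle(p),\langle x,\mu_{A}\rangle(q)\}$, dually for $\nu_{A}$. One correction of emphasis: commutativity is not what licenses the regrouping. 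The identity $x\gamma(p\beta q)=(x\gamma p)\beta q$ is plain associativity, and the prime equality $\inf_{\gamma}\mu_{A}(a\gamma b)=\max\{\mu_{A}(a),\mu_{A}(b)\}$ holds for \emph{arbitrary} $a,b$, so it applies directly with $a=x$, $b=p\beta q$; no shuffling of $x$ past $p$ and $q$ is needed, and the reindexing worry you raise never arises, because the $\Gamma$-variable of the applied identity is exactly the inner infimum variable.

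The semiprime half, however, has a genuine gap: it is \emph{not} the specialization $p=q$ of the prime computation. The prime argument peels off factors using an equality valid for two arbitrary factors, whereas the $IFSPI$ hypothesis on $A$ gives only the one-sided inequality $\mu_{A}(a)\geq\inf_{\delta}\mu_{A}(a\delta a)$, and only for squares. Neither $x\gamma(p\beta p)$ nor $(x\gamma p)\beta p$ is a square, so the $IFSPI$ identity cannot be invoked where your sketch proposes to invoke it; moreover, the inequality you can extract from $IFI$-ness of $\langle x,A\rangle$, namely $\inf_{\beta}\langle x,\mu_{A}\rangle(p\beta p)\geq\langle x,\mu_{A}\rangle(p)$, points in the wrong direction. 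A correct argument applies the semiprime inequality to the compound element $a=x\gamma p$ and then uses commutativity together with the mixed associativity of the both-sided $\Gamma$-semigroup: $(x\gamma p)\delta(x\gamma p)=x\gamma(x\delta(p\gamma p))=x(\gamma x\delta)(p\gamma p)$ with $\gamma x\delta\in\Gamma$, whence $\mu_{A}\bigl((x\gamma p)\delta(x\gamma p)\bigr)\geq\inf_{\gamma'}\mu_{A}(x\gamma'(p\gamma p))\geq\inf_{\beta}\langle x,\mu_{A}\rangle(p\beta p)$; taking infima over $\delta$ and then over $\gamma$ gives $\langle x,\mu_{A}\rangle(p)\geq\inf_{\beta}\langle x,\mu_{A}\rangle(p\beta p)$, and dually for $\nu_{A}$. (The paper itself waves this case off as ``routine calculation,'' but the reduction as you describe it would not go through.)
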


\begin{proof}
Let $A=(\mu_{A},\nu_{A})$ be an $IFPI(S).$ Then by Proposition $5.2,$ $<x,A>$ is an $IFI(S).$ Let $y,z\in S.$ Then
$$
\begin{array}{ll}
\underset{\beta\in\Gamma}{\inf}<x,\mu_{A}>(y\beta z)& =\underset{\gamma\in\Gamma}{\inf}\underset{\gamma\in\Gamma}{\inf}\mu_{A}(x\gamma y\beta z)(cf. \text{ Definition 5.1)}\\
&=\underset{\beta\in\Gamma}{\inf}\max\{\mu_{A}(x),\mu_{A}(y\beta z)\}(cf. \text{ Definition 4.1)}\\
&=\max\{\mu_{A}(x),\underset{\beta\in\Gamma}{\inf}\mu_{A}(y\beta z)\}\\
&=\max\{\mu_{A}(x),\max\{\mu_{A}(y),\mu_{A}(z)\}\}\\
&=\max\{\max\{\mu_{A}(x),\mu_{A}(y)\},\max\{\mu_{A}(x),\mu_{A}(z)\}\}\\
&=\max\{\underset{\delta\in\Gamma}{\inf}\mu_{A}(x\delta y),\underset{\epsilon\in\Gamma}{\inf}\mu_{A}(x\epsilon z)\}\\
&=\max\{<x,\mu_{A}>(y),<x,\mu_{A}>(z)\}.
\end{array}
$$
Similarly we can show that $\underset{\beta\in\Gamma}{\sup}<x,\nu_{A}>(y\beta z)=\min\{<x,\nu_{A}>(y),<x,\nu_{A}>(z)\}.$ Hence by Definition $5.1,$ $<x,A>$ is an $IFPI(S).$ By routine calculation we can show that $<x,A>$ is an $IFSPI(S).$
\end{proof}

\begin{definition}
Let $S$ be a $\Gamma$-semigroup and $A=(\mu_{A},\nu_{A})$ be an $IFS(S).$ Then we define the support of $S$ by $Supp$ $ A=\{x\in S:\mu_{A}(x)>0$ and $\nu_{A}(x)<1\}.$
\end{definition}

\begin{proposition}
Let $S$ be a $\Gamma$-semigroup, $A=(\mu_{A},\nu_{A})$ be an $IFI(S)$ and $x\in S.$ Then we have the following:

$(1)$ $A\subseteq <x,A>.$

$(2)$ $<(x\alpha)^{n}x,A>\subseteq <(x\alpha)^{n+1}x,A>\forall\alpha\in\Gamma,\forall n\in N.$

$(3)$ If $\mu_{A}(x)>0$ and $\nu_{A}(x)<1$ then $Supp <x,A>=S.$
\end{proposition}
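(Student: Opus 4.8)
The plan is to verify each of the three assertions directly from the defining inequalities of an $IFI(S)$ together with the definition of the extension $<x,A>$, recalling that $A\subseteq B$ for intuitionistic fuzzy sets means $\mu_A\leq\mu_B$ and $\nu_A\geq\nu_B$ pointwise. Thus each part amounts to a pair of pointwise inequalities, one for the membership function and one for the non-membership function, and in each case it suffices to establish the inequality termwise inside the infimum/supremum and then pass to the $\inf$ (resp.\ $\sup$).

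For part $(1)$, I would fix $y\in S$ and invoke the left ideal property of $A$: for every $\gamma\in\Gamma$ we have $\mu_A(x\gamma y)\geq\mu_A(y)$ and $\nu_A(x\gamma y)\leq\nu_A(y)$. Taking $\inf_{\gamma\in\Gamma}$ of the first and $\sup_{\gamma\in\Gamma}$ of the second gives $<x,\mu_A>(y)\geq\mu_A(y)$ and $<x,\nu_A>(y)\leq\nu_A(y)$, which is exactly $A\subseteq <x,A>$. For part $(2)$, writing $w_n:=(x\alpha)^n x$ and using associativity to see that $w_{n+1}=x\alpha w_n$, I would observe that for any $y\in S$ and $\gamma\in\Gamma$ the element $w_{n+1}\gamma y=x\alpha(w_n\gamma y)$ arises from $w_n\gamma y$ by left multiplication through $x\alpha$; the left ideal property then yields $\mu_A(w_{n+1}\gamma y)\geq\mu_A(w_n\gamma y)$ and $\nu_A(w_{n+1}\gamma y)\leq\nu_A(w_n\gamma y)$. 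Passing to $\inf_{\gamma\in\Gamma}$ (resp.\ $\sup_{\gamma\in\Gamma}$) delivers $<w_n,A>\subseteq <w_{n+1},A>$; notably no induction is needed, since the inclusion holds for each fixed $n$ by a single application of left ideal monotonicity.

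For part $(3)$ I would switch to the right ideal property. Since $A$ is in particular an $IFRI(S)$, for every $y\in S$ and every $\gamma\in\Gamma$ we have $\mu_A(x\gamma y)\geq\mu_A(x)>0$ and $\nu_A(x\gamma y)\leq\nu_A(x)<1$. Consequently $<x,\mu_A>(y)=\inf_{\gamma\in\Gamma}\mu_A(x\gamma y)\geq\mu_A(x)>0$ and $<x,\nu_A>(y)=\sup_{\gamma\in\Gamma}\nu_A(x\gamma y)\leq\nu_A(x)<1$, so every $y\in S$ lies in the support; hence $Supp <x,A>=S$.

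The computations here are all routine, so there is no serious obstacle; the only point demanding care is the bookkeeping in part $(2)$ — correctly reading $(x\alpha)^n x$ as the alternating product with $n+1$ occurrences of $x$ and $n$ of $\alpha$, factoring off a single leading $x\alpha$ by associativity, and recognizing that the desired inclusion follows from one-sided monotonicity rather than a genuine inductive argument. Everything else is mechanical, following from the one-sided ideal inequalities and the monotonicity of $\inf$ and $\sup$.
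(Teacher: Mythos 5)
Your proposal is correct and follows essentially the same route as the paper: each part is proved by the same termwise ideal inequalities ($\mu_A(x\gamma y)\geq\mu_A(y)$, $\nu_A(x\gamma y)\leq\nu_A(y)$ for $(1)$ and $(2)$ via the factorization $(x\alpha)^{n+1}x=x\alpha(x\alpha)^{n}x$, and $\mu_A(x\gamma y)\geq\mu_A(x)$, $\nu_A(x\gamma y)\leq\nu_A(x)$ for $(3)$), followed by passing to $\inf_{\gamma\in\Gamma}$ and $\sup_{\gamma\in\Gamma}$. Your explicit bookkeeping of which one-sided ideal property is invoked at each step is a slight clarification of the paper's blanket phrase ``since $A$ is an $IFI(S)$,'' but the argument is identical.
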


\begin{proof}
$(1)$ Let $y\in S.$ Then $<x,\mu_{A}>(y)=\underset{\gamma\in\Gamma}{\inf}\mu_{A}(x\gamma y)\geq\mu_{A}(y)($since $A$ is an $IFI(S)).$ Again $<x,\nu_{A}>(y)=\underset{\gamma\in\Gamma}{\sup}\nu_{A}(x\gamma y)\leq\nu_{A}(y)($since $A$ is an $IFI(S)).$ Hence $A\subseteq <x,A>.$

$(2)$ Let $y\in S.$ Then $<(x\alpha)^{n+1}x,\mu_{A}>(y)=\underset{\gamma\in\Gamma}{\inf}\mu_{A}((x\alpha)^{n+1}x\gamma y)=\underset{\gamma\in\Gamma}{\inf}\mu_{A}(x\alpha(x\alpha)^{n}x\\\gamma y)\geq\underset{\gamma\in\Gamma}{\inf}\mu_{A}((x\alpha)^{n}x\gamma y)($since $A$ is an $IFI(S))=<(x\alpha)^{n}x,\mu_{A}>(y).$ Similarly we can show that $<(x\alpha)^{n+1}x,\nu_{A}>(y)\leq<(x\alpha)^{n}x,\nu_{A}>(y).$ Hence $<(x\alpha)^{n}x,A>\subseteq <(x\alpha)^{n+1}x,A>.$

$(3)$ Since $<x,A>$ is an $IFS(S),$ so by definition $Supp <x,A>\subseteq S.$ Let $y\in S.$ Since $A$ is an $IFI(S),$ we have $<x,\mu_{A}>(y)=\underset{\gamma\in\Gamma}{\inf}\mu_{A}(x\gamma y)\geq\mu_{A}(x)>0$ and $<x,\nu_{A}>(y)=\underset{\gamma\in\Gamma}{\sup}\nu_{A}(x\gamma y)\leq\nu_{A}(x)<1.$ So $y\in Supp<x,A>.$ Consequently, $S\subseteq Supp<x,A>.$ Hence $Supp <x,A>=S.$
\end{proof}

\begin{remark}
If $(x\alpha)^{0}x=x$ then Proposition $5.5(2)$ is also true for $n=0.$
\end{remark}

\begin{definition}
$\cite{D3}$ Let $S$ be a $\Gamma$-semigroup, $A\subseteq S$ and $x\in S.$ We define $<x,A>=\{y\in S:x\Gamma y\subseteq A\},$ where $x\Gamma y:=\{x\alpha y:\alpha\in\Gamma\}.$
\end{definition}

\begin{proposition}
Let $S$ be a $\Gamma$-semigroup, $\phi\neq A\subseteq S$ and $\chi_{A}$ be the characteristic function of $A.$ Then for all $x\in S,$ $<x,\chi_{A}>=\chi_{<x,A>}$ and $<x,\chi^{c}_{A}>=\chi^{c}_{<x,A>}.$
\end{proposition}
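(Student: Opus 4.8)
The plan is to fix an arbitrary $y \in S$ and to establish the two claimed identities pointwise, distinguishing the two cases $y \in <x,A>$ and $y \notin <x,A>$, where $<x,A>$ denotes the set-theoretic extension of Definition 5.6. Note first that since $\chi_A$ sits in the ``membership'' role, $<x,\chi_A>(y) = \inf_{\gamma \in \Gamma}\chi_A(x\gamma y)$, while $\chi_A^c$ sits in the ``non-membership'' role, so $<x,\chi_A^c>(y) = \sup_{\gamma \in \Gamma}\chi_A^c(x\gamma y)$, both in accordance with Definition 5.1. The single fact that drives everything is that $\chi_A$ takes only the values $0$ and $1$: an infimum of numbers lying in $\{0,1\}$ equals $1$ precisely when all of them are $1$ and equals $0$ otherwise, and dually a supremum of such numbers equals $0$ precisely when all of them are $0$ and equals $1$ otherwise.

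First I would translate the membership condition defining $<x,A>$ into a statement about $\chi_A$. By Definition 5.6, $y \in <x,A>$ means $x\Gamma y \subseteq A$, that is, $x\gamma y \in A$ for every $\gamma \in \Gamma$, which is the same as saying $\chi_A(x\gamma y) = 1$ for all $\gamma$. In this case $<x,\chi_A>(y) = \inf_{\gamma \in \Gamma} \chi_A(x\gamma y) = 1 = \chi_{<x,A>}(y)$. If on the other hand $y \notin <x,A>$, then there is some $\gamma_0 \in \Gamma$ with $x\gamma_0 y \notin A$, whence $\chi_A(x\gamma_0 y) = 0$ and so the infimum over $\gamma$ is $0 = \chi_{<x,A>}(y)$. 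Combining both cases gives $<x,\chi_A> = \chi_{<x,A>}$.

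For the second identity I would avoid repeating the case analysis and instead exploit the first one together with the pointwise relation $\chi_A^c = 1 - \chi_A$ and the elementary fact $\sup_{\gamma}(1 - f(\gamma)) = 1 - \inf_{\gamma} f(\gamma)$. This yields $<x,\chi_A^c>(y) = \sup_{\gamma \in \Gamma} \chi_A^c(x\gamma y) = 1 - \inf_{\gamma \in \Gamma} \chi_A(x\gamma y) = 1 - \chi_{<x,A>}(y) = \chi_{<x,A>}^c(y)$ for every $y$, which is exactly the assertion. A direct parallel case analysis is equally available, since $\chi_A^c(x\gamma y) = 0$ for all $\gamma$ holds exactly when $y \in <x,A>$, and otherwise some term equals $1$.

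There is no genuinely hard step here; the only point requiring a little care is the passage between the universal quantifier ``for all $\gamma \in \Gamma$'' built into the definition of $<x,A>$ and the infimum and supremum of Definition 5.1. This passage is legitimate precisely because $\chi_A$ is two-valued, so that the infimum and supremum collapse to a membership test rather than to a genuine limiting process, and no attainment or continuity considerations are needed.
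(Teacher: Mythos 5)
Your proof is correct and follows essentially the same route as the paper's: a pointwise case analysis on whether $y\in <x,A>$, translating the condition $x\Gamma y\subseteq A$ into $\chi_{A}(x\gamma y)=1$ for all $\gamma\in\Gamma$, so that the two-valuedness of $\chi_{A}$ collapses the infimum to a membership test. Your derivation of the second identity from the first via $\underset{\gamma\in\Gamma}{\sup}\,(1-f(\gamma))=1-\underset{\gamma\in\Gamma}{\inf}\,f(\gamma)$ is a slightly tidier finish than the paper, which simply dismisses that half as ``similarly by routine calculation,'' but it is not a genuinely different argument.
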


\begin{proof}
Let $x,y\in S.$ If $y\in <x,A>$ then $x\Gamma y\subseteq A,$ which implies that $x\gamma y\in A\forall\gamma\in\Gamma.$ Then $\chi_{A}(x\gamma y)=1\forall\gamma\in\Gamma$ and hence $\underset{\gamma\in\Gamma}{\inf}\chi_{A}(x\gamma y)=1$ whence $<x,\chi_{A}>(y)=1.$ Also $\chi_{<x,A>}(y)=1.$ Again if $y\notin <x,A>$ then there exists $\gamma\in\Gamma$ such that $x\gamma y\notin A.$ So $\chi_{A}(x\gamma y)=0.$ Hence $\underset{\gamma\in\Gamma}{\inf}\chi_{A}(x\gamma y)=0.$ Thus $\underset{\gamma\in\Gamma}{\inf}\chi_{A}(x\gamma y)=0$ whence $<x,\chi_{A}>(y)=0.$ Also $\chi_{<x,A>}(y)=0.$ Hence $<x,\chi_{A}>=\chi_{<x,A>}.$ Similarly by routine calculation we can show that $<x,\chi^{c}_{A}>=\chi^{c}_{<x,A>}.$
\end{proof}

\begin{proposition}
Let $S$ be a $\Gamma$-semigroup and $A=(\mu_{A},\nu_{A})$ be a non-empty $IFS(S).$ Then for any $t\in [0,1],$ $<x,U(\mu_{A};t)>=U(<x,\mu_{A}>;t)$ and $<x,L(\nu_{A};t)>=L(<x,\nu_{A}>;t)$ for all $x\in S.$
\end{proposition}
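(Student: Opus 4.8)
The plan is to prove both set equalities by unwinding the two distinct meanings of the bracket notation — Definition $5.7$, where $<x,B>$ denotes a subset of $S$ when $B\subseteq S$, and Definition $5.1$, where $<x,\mu_{A}>$ and $<x,\nu_{A}>$ denote the extension functions — and then reducing everything to the elementary fact that an infimum (respectively supremum) of a family of reals is $\geq t$ (respectively $\leq t$) exactly when every member of the family is $\geq t$ (respectively $\leq t$).

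First I would fix $x\in S$ and $t\in[0,1]$ and take an arbitrary $y\in S$, then chase the membership $y\in <x,U(\mu_{A};t)>$. By Definition $5.7$ this means $x\Gamma y\subseteq U(\mu_{A};t)$, i.e. $x\gamma y\in U(\mu_{A};t)$ for every $\gamma\in\Gamma$, which by the definition of the upper level cut says precisely that $\mu_{A}(x\gamma y)\geq t$ for all $\gamma\in\Gamma$. On the other hand, $y\in U(<x,\mu_{A}>;t)$ means $<x,\mu_{A}>(y)\geq t$, i.e. $\underset{\gamma\in\Gamma}{\inf}\,\mu_{A}(x\gamma y)\geq t$ by Definition $5.1$. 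These two conditions coincide: if every $\mu_{A}(x\gamma y)\geq t$ then $t$ is a lower bound of the family, so the infimum is $\geq t$; conversely, if the infimum is $\geq t$ then each $\mu_{A}(x\gamma y)\geq\underset{\gamma\in\Gamma}{\inf}\,\mu_{A}(x\gamma y)\geq t$. Hence $<x,U(\mu_{A};t)>=U(<x,\mu_{A}>;t)$.

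For the second equality I would argue dually. The membership $y\in <x,L(\nu_{A};t)>$ unwinds, via Definition $5.7$ and the definition of the lower level cut, to $\nu_{A}(x\gamma y)\leq t$ for all $\gamma\in\Gamma$, while $y\in L(<x,\nu_{A}>;t)$ unwinds, via Definition $5.1$, to $\underset{\gamma\in\Gamma}{\sup}\,\nu_{A}(x\gamma y)\leq t$. Again these are equivalent: all terms $\leq t$ forces the supremum $\leq t$, and conversely each term is dominated by the supremum. This gives $<x,L(\nu_{A};t)>=L(<x,\nu_{A}>;t)$.

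The argument is essentially bookkeeping, so I do not expect a genuine obstacle; the only point needing care is the direction ``infimum $\geq t$ implies every term $\geq t$'' (and its supremum analogue), where it matters that $t$ is compared non-strictly with the extremum, so that whether or not the inf/sup is attained is irrelevant. No regularity or ideal hypothesis on $A$ is used — the statement is a purely set-theoretic compatibility of the two level-cut constructions with the extension operation, valid for any non-empty $IFS(S)$.
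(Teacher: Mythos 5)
Your proposal is correct and follows essentially the same route as the paper: both unwind Definition $5.7$ and Definition $5.1$ to reduce each membership condition to ``$\mu_{A}(x\gamma y)\geq t$ for all $\gamma\in\Gamma$'' (resp. ``$\nu_{A}(x\gamma y)\leq t$ for all $\gamma\in\Gamma$'') and invoke the elementary inf/sup comparison; the paper merely phrases this as one inclusion followed by ``reversing the above argument,'' and leaves the $L(\nu_{A};t)$ case as a routine dual, which you write out explicitly.
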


\begin{proof}
Let $y\in U(<x,\mu_{A}>;t).$ Then $<x,\mu_{A}>(y)\geq t.$ Hence $\underset{\gamma\in\Gamma}{\inf}\mu_{A}(x\gamma y)\geq t.$ This gives $\mu_{A}(x\gamma y)\geq t\forall\gamma\in\Gamma$ and hence $x\gamma y\in U(\mu_{A};t).$ Consequently, $y\in <x,U(\mu_{A};t)>.$ It follows that $U(<x,\mu_{A}>;t)\subseteq <x,U(\mu_{A};t)>.$ Reversing the above argument we can deduce that $<x,U(\mu_{A};t)>\subseteq U(<x,\mu_{A}>;t).$ Hence $<x,U(\mu_{A};t)>=U(<x,\mu_{A}>;t)\forall x\in S.$ Similarly by routine calculation we can show that $<x,L(\nu_{A};t)>=L(<x,\nu_{A}>;t)\forall x\in S.$
\end{proof}

\begin{proposition}
Let $S$ be a commutative $\Gamma$-semigroup and $A=(\mu_{A},\nu_{A})$ be an $IFS(S)$ such that $<x,A>=A$ for every $x\in S.$ Then $A=(\mu_{A},\nu_{A})$ is a constant function.
\end{proposition}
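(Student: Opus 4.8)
The plan is to unwind the hypothesis $<x,A>=A$ into a pair of functional equations and then exploit commutativity as a symmetry. By Definition $5.1$, the equality $<x,A>=A$ means precisely that
$$
\underset{\gamma\in\Gamma}{\inf}\ \mu_{A}(x\gamma y)=\mu_{A}(y)
\quad\text{and}\quad
\underset{\gamma\in\Gamma}{\sup}\ \nu_{A}(x\gamma y)=\nu_{A}(y)
$$
for all $x,y\in S$. My first step is just to record these two identities, valid for every choice of $x$ and $y$, since this is all the hypothesis gives us.

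Next I would fix two arbitrary elements $a,b\in S$ and aim to show $\mu_{A}(a)=\mu_{A}(b)$ and $\nu_{A}(a)=\nu_{A}(b)$. Applying the first identity with the substitution $(x,y)=(a,b)$ yields $\mu_{A}(b)=\underset{\gamma\in\Gamma}{\inf}\ \mu_{A}(a\gamma b)$, while the substitution $(x,y)=(b,a)$ yields $\mu_{A}(a)=\underset{\gamma\in\Gamma}{\inf}\ \mu_{A}(b\gamma a)$. The crucial observation is that commutativity of $S$ gives $a\gamma b=b\gamma a$ for every $\gamma\in\Gamma$, so the two families $\{\mu_{A}(a\gamma b):\gamma\in\Gamma\}$ and $\{\mu_{A}(b\gamma a):\gamma\in\Gamma\}$ coincide and hence have the same infimum. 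Chaining the three equalities therefore forces $\mu_{A}(a)=\mu_{A}(b)$.

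The argument for the non-membership part is entirely parallel: the same two substitutions in the $\sup$-identity give $\nu_{A}(b)=\underset{\gamma\in\Gamma}{\sup}\ \nu_{A}(a\gamma b)$ and $\nu_{A}(a)=\underset{\gamma\in\Gamma}{\sup}\ \nu_{A}(b\gamma a)$, and commutativity equates the two suprema, yielding $\nu_{A}(a)=\nu_{A}(b)$. Since $a,b$ were arbitrary, both $\mu_{A}$ and $\nu_{A}$ are constant, so $A=(\mu_{A},\nu_{A})$ is a constant function. I do not anticipate a genuine obstacle here; the only point requiring care is the correct reading of ``commutative'' as $a\gamma b=b\gamma a$ for all $a,b\in S$ and $\gamma\in\Gamma$, which is exactly what lets us swap the roles of $x$ and $y$ and turn the fixed-point hypothesis into a constancy statement.
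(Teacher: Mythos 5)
Your proposal is correct and follows essentially the same route as the paper's own proof: both unwind the hypothesis $<x,A>=A$ into the identities $\underset{\gamma\in\Gamma}{\inf}\,\mu_{A}(x\gamma y)=\mu_{A}(y)$ and $\underset{\gamma\in\Gamma}{\sup}\,\nu_{A}(x\gamma y)=\nu_{A}(y)$, then use commutativity ($a\gamma b=b\gamma a$) to identify the two infima (resp.\ suprema) obtained by swapping the roles of the two elements, which forces $\mu_{A}(a)=\mu_{A}(b)$ and $\nu_{A}(a)=\nu_{A}(b)$. The only difference is notational: the paper chains the equalities directly as $\mu_{A}(x)=<y,\mu_{A}>(x)=<x,\mu_{A}>(y)=\mu_{A}(y)$, while you spell out the two substitutions separately.
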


\begin{proof}
Let $x,y\in S.$ Let $<x,A>=A\forall x\in S.$ Then $<x,\mu_{A}>=\mu_{A}$ and $<x,\nu_{A}>=\nu_{A}\forall x\in S.$ Then by hypothesis we have $\mu_{A}(x)=<y,\mu_{A}>(x)=\underset{\gamma\in\Gamma}{\inf}\mu_{A}(y\gamma x)=\underset{\gamma\in\Gamma}{\inf}\mu_{A}(x\gamma y)($since $S$ is commutative$)=<x,\mu_{A}>(y)=\mu_{A}(y).$ Again $\nu_{A}(x)=<y,\nu_{A}>(x)=\underset{\gamma\in\Gamma}{\sup}\nu_{A}(y\gamma x)=\underset{\gamma\in\Gamma}{\sup}\nu_{A}(x\gamma y)($since $S$ is commutative$)=<x,\nu_{A}>(y)=\nu_{A}(y).$ Hence $A=(\mu_{A},\nu_{A})$ is a constant function.
\end{proof}

\begin{corollary}
Let $S$ be a commutative $\Gamma$-semigroup and $A=(\mu_{A},\nu_{A})$ be an $IFPI(S).$ If $A=(\mu_{A},\nu_{A})$ is not constant, then $A=(\mu_{A},\nu_{A})$ is not maximal $IFPI(S).$
\end{corollary}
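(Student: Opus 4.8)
The plan is to read the corollary as an immediate consequence of the preceding three facts about the extension operator $<x,A>$, organised so that the failure of constancy produces an intuitionistic fuzzy prime ideal strictly larger than $A$. I would give a direct argument that exhibits an explicit witness to non-maximality. So assume $A=(\mu_{A},\nu_{A})$ is an $IFPI(S)$ that is not constant. Since every $IFPI(S)$ is in particular an $IFI(S)$, Propositions $5.3$, $5.5$ and $5.9$ all apply to $A$.

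First I would use Proposition $5.9$ to locate a point at which the extension genuinely enlarges $A$. That proposition asserts that, for commutative $S$, having $<x,A>=A$ for every $x\in S$ forces $A$ to be a constant function. Taking its contrapositive and invoking that $A$ is not constant, there must exist some $x_{0}\in S$ with $<x_{0},A>\neq A$.

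Next I would promote this inequality to a strict inclusion and confirm that the enlarged object remains prime. Proposition $5.5(1)$ gives the inclusion $A\subseteq <x_{0},A>$ with no further hypotheses, so together with $<x_{0},A>\neq A$ it yields $A\subsetneq <x_{0},A>$. Since $S$ is commutative and $A$ is an $IFPI(S)$, Proposition $5.3$ guarantees that $<x_{0},A>$ is again an $IFPI(S)$. Thus $<x_{0},A>$ is a genuine intuitionistic fuzzy prime ideal that properly contains $A$, and I would conclude from this that $A$ is not maximal.

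The step I expect to need the most care is matching this witness to the precise notion of maximal $IFPI(S)$ intended in the statement. One must be sure that $<x_{0},A>$ counts as an admissible competitor in that definition; in particular, if maximality is understood among the proper (non-constant) intuitionistic fuzzy prime ideals, one should check that the enlargement $<x_{0},A>$ is not itself ruled out, i.e. not forced to coincide with the constant top ideal. Everything else is routine bookkeeping: the whole corollary is the formal composition of Propositions $5.9$, $5.5(1)$ and $5.3$, and the genuine mathematical content already resides in those results, especially in Proposition $5.9$.
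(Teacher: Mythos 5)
Your proposal is correct and follows essentially the same route as the paper: both arguments combine Proposition 5.3 (primeness of $<x,A>$ under commutativity), Proposition 5.5(1) (the inclusion $A\subseteq <x,A>$), and Proposition 5.9 (constancy when $<x,A>=A$ for all $x$), differing only in that you take the contrapositive of Proposition 5.9 where the paper argues by contradiction. Your closing caveat about what ``maximal'' admits as a competitor is a fair observation, but the paper, like you, treats the strict inclusion $A\subsetneq <x_{0},A>$ into an $IFPI(S)$ as sufficient.
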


\begin{proof}
Let $A=(\mu_{A},\nu_{A})$ be an $IFPI(S).$ Since $S$ is commutative, by Proposition $5.3,$ $<x,A>$ is an $IFPI(S).$ Now by Proposition $5.5(1),$ $A\subseteq <x,A>\forall x\in S.$ If $A=<x,A>$ for all $x\in S$ then by Proposition $5.9,$ $A$ is a constant function which contradicts the hypothesis. Hence there exists $x\in S$ such that $A\subsetneq <x,A>.$ This completes the proof.
\end{proof}

\begin{corollary}
Let $S$ be a commutative $\Gamma$-semigroup, $\{A_{i}\}_{i\in I}=(\mu_{A_{i}},\nu_{A_{i}})_{i\in I}$ be a non-empty family of $IFSPI(S)$ and $A=(\mu_{A},\nu_{A})=\underset{i\in I}{\inf}$ $ A_{i}=(\underset{i\in I}{\inf}$ $\mu_{A_{i}},\underset{i\in I}{\sup}$ $\nu_{A_{i}}).$ Then for any $x\in S,$ $<x,A>=(<x,\mu_{A}>,<x,\nu_{A}>)$ is an $IFSPI(S).$
\end{corollary}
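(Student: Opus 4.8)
The plan is to reduce the statement to two results already proved above, so that no direct manipulation of the semiprimeness inequalities is needed. The central observation is that the infimum $A=\inf_{i\in I}A_i=(\inf_{i\in I}\mu_{A_i},\sup_{i\in I}\nu_{A_i})$ coincides, as an intuitionistic fuzzy set, with the intersection $\bigcap_{i\in I}A_i$: the binary intersection defined in the Preliminaries takes the minimum of the membership functions and the maximum of the non-membership functions, and its natural extension to an arbitrary family replaces $\min$ by $\inf$ and $\max$ by $\sup$, which is precisely the definition of $\inf_{i\in I}A_i$ recorded there.

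Granting this identification, the first step is to invoke Proposition $4.3$, which asserts that the intersection of a family of $IFSPI(S)$ is again an $IFSPI(S)$. Since each $A_i$ is an $IFSPI(S)$ and $A=\bigcap_{i\in I}A_i$, we conclude that $A=(\mu_A,\nu_A)$ is itself an $IFSPI(S)$.

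The second step uses the hypothesis that $S$ is commutative. By Proposition $5.3$, for any $IFSPI(S)$, $A$, and any $x\in S$, the extension $<x,A>=(<x,\mu_A>,<x,\nu_A>)$ is again an $IFSPI(S)$. Applying this to the $A$ produced in the first step yields the desired conclusion for every $x\in S$.

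I do not anticipate a genuine obstacle here; the only point requiring a word of care is the identification $\inf_{i\in I}A_i=\bigcap_{i\in I}A_i$, which is what licenses the appeal to Proposition $4.3$. Once that is in hand, Propositions $4.3$ and $5.3$ chain together immediately, and the corollary follows without ever returning to the defining conditions of an $IFSPI$.
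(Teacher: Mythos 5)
Your proof is correct, but it takes a genuinely different route from the paper's. The paper never invokes Proposition $4.3$: instead it verifies from scratch that $A=\underset{i\in I}{\inf}\,A_{i}$ is an $IFSPI(S)$ --- first arguing non-emptiness (via the values $\mu_{A_{i}}(0),\nu_{A_{i}}(0)$ at a zero element of $S$), then checking the ideal inequalities $\mu_{A}(x\gamma y)=\underset{i}{\inf}\,\mu_{A_{i}}(x\gamma y)\geq\underset{i}{\inf}\,\mu_{A_{i}}(x)=\mu_{A}(x)$ (with commutativity giving two-sidedness), and finally obtaining semiprimeness by interchanging the two infima, $\mu_{A}(a)=\underset{i}{\inf}\,\mu_{A_{i}}(a)\geq\underset{i}{\inf}\,\underset{\gamma\in\Gamma}{\inf}\,\mu_{A_{i}}(a\gamma a)=\underset{\gamma\in\Gamma}{\inf}\,\underset{i}{\inf}\,\mu_{A_{i}}(a\gamma a)=\underset{\gamma\in\Gamma}{\inf}\,\mu_{A}(a\gamma a)$ (dually for $\nu_{A}$) --- and only then cites Proposition $5.3$, exactly as you do in your last step. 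Your shortcut, identifying $\underset{i\in I}{\inf}\,A_{i}$ with $\underset{i\in I}{\bigcap}A_{i}$ and quoting Proposition $4.3$, is legitimate: that proposition is stated for an arbitrary index set, the inf/sup construction is the only sensible reading of an infinite intersection of $IFS$, and the Remark immediately following this corollary confirms the paper itself makes that identification. What your route buys is brevity and a cleaner separation of hypotheses: commutativity of $S$ enters only through Proposition $5.3$, since Proposition $4.3$ needs no commutativity. What the paper's route buys is self-containedness: Proposition $4.3$ is asserted only ``by routine verification,'' so the paper's explicit computation in effect supplies that verification for the $IFSPI$ case (which is precisely what the subsequent Remark advertises), and it also addresses the non-emptiness clause in the definition of an $IFI(S)$ head-on, a point your appeal to Proposition $4.3$ silently absorbs. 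Incidentally, by delegating to Proposition $4.3$ you also sidestep the weakest part of the paper's argument, namely the non-emptiness step, which tacitly assumes $S$ has a zero and that an infimum of nonzero values stays nonzero.
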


\begin{proof}
Since each $A_{i}=(\mu_{A_{i}},\nu_{A_{i}})(i\in I)$ is an $IFI(S),$ $\mu_{A_{i}}(0)\neq 0$ and $\nu_{A_{i}}\neq 1\forall i\in I($each $\mu_{A_{i}}$ and $\nu_{A_{i}}$ are non-empty, so there exist $x_{i}\in S$ such that $\mu_{A_{i}}(x_{i})\neq 0$ and $\nu_{A_{i}}(x_{i})\neq 1\forall i\in I.$ Also $\mu_{A_{i}}(0)=\mu_{A_{i}}(0\gamma x_{A_{i}})\geq\mu_{A_{i}}(x_{i})$ and $\nu_{A_{i}}(0)=\nu_{A_{i}}(0\gamma x_{A_{i}})\leq\nu_{A_{i}}(x_{i})\forall i\in I.$ Hence $\forall i\in I,\mu_{A_{i}}(0)\neq 0$ and $\nu_{A_{i}}(0)\neq 1).$ Consequently, $\mu_{A}(0)\neq 0$ and $\nu_{A}(0)\neq 1.$ Thus $A=(\mu_{A},\nu_{A})$ is non-empty. Now let $x,y\in S$ and $\gamma\in\Gamma.$ Then $\mu_{A}(x\gamma y)=\underset{i\in I}{\inf}$ $\mu_{A_{i}}(x\gamma y)\geq\underset{i\in I}{\inf}$ $\mu_{A_{i}}(x)=\mu_{A}(x)$ and $\nu_{A}(x\gamma y)=\underset{i\in I}{\sup}$ $\nu_{A_{i}}(x\gamma y)\leq\underset{i\in I}{\sup}$ $\nu_{A_{i}}(x)=\nu_{A}(x).$ Hence $S$ being commutative $A$ is an $IFI(S).$

Again let $a\in S.$ Then $\mu_{A}(a)=\underset{i\in I}{\inf}$ $\mu_{A_{i}}(a)\geq\underset{i\in I}{\inf}$ $\underset{\gamma\in\Gamma}{\inf}$ $\mu_{A_{i}}(a\gamma a)=\underset{\gamma\in\Gamma}{\inf}$ $\underset{i\in I}{\inf}$ $\mu_{A_{i}}(a\gamma a)=\underset{\gamma\in\Gamma}{\inf}$ $\mu_{A}(a\gamma a)$ and $\nu_{A}(a)=\underset{i\in I}{\sup}$ $\nu_{A_{i}}(a)\leq\underset{i\in I}{\sup}$ $\underset{\gamma\in\Gamma}{\sup}$ $\nu_{A_{i}}(a\gamma a)=\underset{\gamma\in\Gamma}{\sup}$ $\underset{i\in I}{\sup}$ $\nu_{A_{i}}(a\gamma a)=\underset{\gamma\in\Gamma}{\sup}$ $\nu_{A}(a\gamma a).$ Thus $A=(\mu_{A},\nu_{A})$ is an $IFSPI(S).$ Hence by Proposition $5.3,$ $<x,A>$ is an $IFSPI(S).$
\end{proof}

\begin{remark}
The above corollary shows that in a $\Gamma$-semigroup intersection of arbitrary family of $IFSPI(S)$ is an $IFSPI(S).$
\end{remark}

\begin{corollary}
Let $S$ be a commutative $\Gamma$-semigroup, $\{S_{i}\}_{i}$ be a non-empty family of $SPI(S),$  $A:=\underset{i\in I}{\bigcap} S_{i}\neq\phi$ and $M=(\mu_{A},\mu^{c}_{A})$ where $\mu_{A}$ is the characteristic function of $A.$ Then $<x,M>$ is an $IFSPI(S),$ for all $x\in S.$
\end{corollary}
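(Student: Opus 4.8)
The plan is to realize $M$ as the infimum of a suitable family of intuitionistic fuzzy semiprime ideals built from the $S_i$, and then to quote Corollary $5.11$, which already handles the infimum of a family of $IFSPI(S)$. Writing $\chi_A$ for the characteristic function $\mu_A$ of $A$, the whole argument reduces to translating the set-theoretic intersection $A=\bigcap_{i\in I}S_i$ into an infimum of intuitionistic fuzzy sets.

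First I would observe that each $S_i$ is an $SPI(S)$, so by Corollary $4.10$ the intuitionistic fuzzy set $A_i:=(\chi_{S_i},\chi^c_{S_i})$ is an $IFSPI(S)$ for every $i\in I$. Since each $S_i$ is a non-empty ideal, every $A_i$ is non-empty, so $\{A_i\}_{i\in I}$ is a non-empty family of $IFSPI(S)$ to which the hypotheses of Corollary $5.11$ apply.

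Second, the key (and essentially only computational) step is to identify $M$ with $\underset{i\in I}{\inf}\,A_i$. This rests on the elementary fact that the characteristic function of an intersection is the pointwise infimum of the characteristic functions: for $y\in S$ one has $y\in A=\bigcap_{i\in I}S_i$ if and only if $\chi_{S_i}(y)=1$ for all $i\in I$, whence $\chi_A=\underset{i\in I}{\inf}\,\chi_{S_i}$ and, taking complements, $\chi_A^c=1-\underset{i\in I}{\inf}\,\chi_{S_i}=\underset{i\in I}{\sup}\,\chi^c_{S_i}$. Therefore, by the definition of the infimum of an $IFS$-family, $M=(\chi_A,\chi_A^c)=(\underset{i\in I}{\inf}\,\chi_{S_i},\underset{i\in I}{\sup}\,\chi^c_{S_i})=\underset{i\in I}{\inf}\,A_i$. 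The hypothesis $A\neq\phi$ guarantees that $M$ is non-empty.

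Finally, since $S$ is commutative and $M=\underset{i\in I}{\inf}\,A_i$ is the infimum of the non-empty family $\{A_i\}_{i\in I}$ of $IFSPI(S)$, Corollary $5.11$ applies verbatim to this family and yields that $<x,M>$ is an $IFSPI(S)$ for every $x\in S$, which is the desired conclusion. I do not anticipate any genuine obstacle: once the identity $M=\underset{i\in I}{\inf}\,A_i$ is in place, the result is immediate from the preceding corollary, and the only point requiring a moment's care is precisely the passage from the set-theoretic intersection $\bigcap_{i\in I}S_i$ to the infimum of intuitionistic fuzzy sets through the characteristic-function identity above. (Alternatively, one could first conclude from the Remark following Corollary $5.11$ that $M$ itself is an $IFSPI(S)$ and then apply Proposition $5.3$, using commutativity of $S$, to obtain that $<x,M>$ is an $IFSPI(S)$.)
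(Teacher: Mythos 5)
Your proposal is correct and follows essentially the same route as the paper: the paper's proof likewise establishes the identity $M=(\underset{i\in I}{\inf}\,\mu_{S_{i}},\underset{i\in I}{\sup}\,\mu^{c}_{S_{i}})$ (by the same pointwise case analysis that your characteristic-function identity encodes), notes that each $(\mu_{S_{i}},\mu^{c}_{S_{i}})$ is an $IFSPI(S)$, and then applies Corollary $5.11$. Your explicit citation of Corollary $4.10$ for the latter fact is a small improvement in bookkeeping, but the argument is the same.
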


\begin{proof}
By the given condition we have $A=\underset{i\in I}{\bigcap} S_{i}\neq\phi.$ Hence $A=(\mu_{A},\nu_{A})$ is non-empty. Let $x\in S.$ If $x\in A,$ then $\mu_{A}(x)=1,\mu^{c}_{A}(x)=0$ and $x\in S_{i}\forall i\in I.$ Hence $\underset{i\in I}{\inf}$ $\mu_{S_{i}}(x)=1=\mu_{A}(x)$ and $\underset{i\in I}{\sup}$ $\mu^{c}_{S_{i}}(x)=0=\mu^{c}_{A}(x).$ If $x\notin A,$ then $\mu_{A}(x)=0,\mu^{c}_{A}(x)=1$ and for some $i\in I,x\notin S_{i}.$ It follows that $\underset{i\in I}{\inf}$ $\mu_{S_{i}}(x)=0=\mu_{A}(x)$ and $\underset{i\in I}{\sup}$ $\mu^{c}_{S_{i}}(x)=1=\mu^{c}_{A}(x).$ Thus we see that $M=(\mu_{A},\mu^{c}_{A})=(\underset{i\in I}{\inf}$ $\mu_{S_{i}},\underset{i\in I}{\sup}$ $\mu^{c}_{S_{i}}).$ Again $(\mu_{S_{i}},\mu^{c}_{S_{i}})$ is an $IFSPI(S)$ for all $i\in I.$ Hence by Corollary $5.11,$ for all $x\in S,$ $<x,M>$ is an $IFSPI(S).$
\end{proof}

\begin{theorem}
Let $S$ be a $\Gamma$-semigroup. If $A=(\mu_{A},\nu_{A})$ is an $IFPI(S)$ and $x\in S$ such that $\mu_{A}(x)=\underset{y\in S}{\inf}$ $\mu_{A}(y)$ and $\nu_{A}(x)=\underset{y\in S}{\sup}$ $\nu_{A}(y),$ then $<x,A>=A.$ Conversely, if $A=(\mu_{A},\nu_{A})$ be an $IFI(S)$ such that $<y,A>=A\forall y\in S$ with $\mu_{A}(y)$ is not maximal in $\mu_{A}(S)$ and $\nu_{A}(y)$ is not minimal in $\nu_{A}(S)$ then $A=(\mu_{A},\nu_{A})$ is $IFPI(S).$
\end{theorem}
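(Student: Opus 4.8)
For the forward implication I would argue straight from the definitions. Fix $x$ with $\mu_{A}(x)=\underset{y\in S}{\inf}\ \mu_{A}(y)$ and $\nu_{A}(x)=\underset{y\in S}{\sup}\ \nu_{A}(y)$, and let $y\in S$ be arbitrary. By Definition $5.1$, $<x,\mu_{A}>(y)=\underset{\gamma\in\Gamma}{\inf}\ \mu_{A}(x\gamma y)$, and since $A$ is an $IFPI(S)$ this equals $\max\{\mu_{A}(x),\mu_{A}(y)\}$ by Definition $4.1$. Because $\mu_{A}(x)$ is the infimum of $\mu_{A}$ over $S$ we have $\mu_{A}(x)\leq\mu_{A}(y)$, so the maximum is $\mu_{A}(y)$ and hence $<x,\mu_{A}>(y)=\mu_{A}(y)$. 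The dual computation with $\sup$ and $\min$, using $\nu_{A}(x)\geq\nu_{A}(y)$, gives $<x,\nu_{A}>(y)=\nu_{A}(y)$. As $y$ was arbitrary, $<x,A>=A$.

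For the converse I would first record the inequalities that come for free from $A$ being an $IFI(S)$. Since $A$ is both a left and a right ideal, $\mu_{A}(a\gamma b)\geq\max\{\mu_{A}(a),\mu_{A}(b)\}$ and $\nu_{A}(a\gamma b)\leq\min\{\nu_{A}(a),\nu_{A}(b)\}$ for all $a,b\in S$ and $\gamma\in\Gamma$; passing to $\inf$ and $\sup$ over $\gamma$ gives $\underset{\gamma\in\Gamma}{\inf}\ \mu_{A}(a\gamma b)\geq\max\{\mu_{A}(a),\mu_{A}(b)\}$ and $\underset{\gamma\in\Gamma}{\sup}\ \nu_{A}(a\gamma b)\leq\min\{\nu_{A}(a),\nu_{A}(b)\}$. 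Thus to obtain the two $IFPI(S)$ identities it suffices to prove the reverse inequalities for every pair $a,b$.

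The reverse inequalities I would get by a case analysis on the element $a$. For the membership identity: if $\mu_{A}(a)$ is maximal in $\mu_{A}(S)$, then $\mu_{A}(a\gamma b)$ is squeezed between $\mu_{A}(a)$ (free inequality) and $\mu_{A}(a)$ (maximality), forcing $\underset{\gamma\in\Gamma}{\inf}\ \mu_{A}(a\gamma b)=\mu_{A}(a)=\max\{\mu_{A}(a),\mu_{A}(b)\}$; if $\mu_{A}(a)$ is not maximal I would invoke $<a,A>=A$, which by Definition $5.1$ reads $\underset{\gamma\in\Gamma}{\inf}\ \mu_{A}(a\gamma b)=<a,\mu_{A}>(b)=\mu_{A}(b)\leq\max\{\mu_{A}(a),\mu_{A}(b)\}$. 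Symmetrically, for the non-membership identity I would split on whether $\nu_{A}(a)$ is minimal in $\nu_{A}(S)$: the minimal case gives a squeeze $\underset{\gamma\in\Gamma}{\sup}\ \nu_{A}(a\gamma b)=\nu_{A}(a)=\min\{\nu_{A}(a),\nu_{A}(b)\}$, and the non-minimal case gives $\underset{\gamma\in\Gamma}{\sup}\ \nu_{A}(a\gamma b)=\nu_{A}(b)\geq\min\{\nu_{A}(a),\nu_{A}(b)\}$ from $<a,A>=A$. Combining each reverse inequality with its free counterpart yields the two equalities and hence that $A$ is an $IFPI(S)$.

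The main obstacle is matching the scope of the extension hypothesis to this case split. As literally phrased, $<a,A>=A$ is guaranteed only when $\mu_{A}(a)$ is non-maximal \emph{and} $\nu_{A}(a)$ is non-minimal, whereas the argument above wants it whenever $\mu_{A}(a)$ is non-maximal (for the membership identity) and, separately, whenever $\nu_{A}(a)$ is non-minimal (for the non-membership identity). The genuinely delicate configurations are the mixed ones, for instance $\mu_{A}(a)$ non-maximal but $\nu_{A}(a)$ minimal: here the $\nu_{A}$-squeeze settles the non-membership identity, but the membership identity still needs $<a,A>=A$, which the joint condition does not supply, and the $\mu_{A}$-squeeze does not apply either. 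I would therefore need to argue that the two non-extremality conditions may be used disjunctively — so that non-maximality of $\mu_{A}(a)$ alone already forces $<a,\mu_{A}>=\mu_{A}$ — and then check that the extremal and non-extremal cases genuinely exhaust all pairs $(a,b)$ with the correct identity proved in each. This bookkeeping, rather than any individual computation, is where the real content of the converse lies.
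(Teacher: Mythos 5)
Your forward direction is correct and is, line for line, the paper's own argument. Your converse also follows the paper's strategy: the free inequalities $\underset{\gamma\in\Gamma}{\inf}\,\mu_{A}(x_{1}\gamma x_{2})\geq\max\{\mu_{A}(x_{1}),\mu_{A}(x_{2})\}$ and $\underset{\gamma\in\Gamma}{\sup}\,\nu_{A}(x_{1}\gamma x_{2})\leq\min\{\nu_{A}(x_{1}),\nu_{A}(x_{2})\}$ from the $IFI(S)$ property, then a squeeze at extremal values versus an appeal to $<y,A>=A$ at non-extremal ones. The difference is that you run the case analysis in each coordinate separately, while the paper makes a joint split into case $(i)$ (some $\mu_{A}(x_{i})$ maximal \emph{and} some $\nu_{A}(x_{j})$ minimal) and case $(ii)$ (no $\mu_{A}(x_{i})$ maximal \emph{and} no $\nu_{A}(x_{j})$ minimal). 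These two cases are not exhaustive: the mixed configurations you single out at the end — for instance some $\mu_{A}(x_{i})$ maximal while neither $\nu_{A}(x_{j})$ is minimal — are exactly the ones the paper silently omits. So the obstacle you flag is a genuine gap, but it is the paper's gap as much as yours; your diagnosis of where the difficulty sits is sharper than the published proof.

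Moreover, the gap cannot be closed by better bookkeeping: under the literal, conjunctive reading of the hypothesis the converse is simply false. Take the paper's own Example (Section 3): $S$ the non-positive integers, $\Gamma$ the non-positive even integers, with ordinary multiplication. Let $I=\{0,-4,-8,\dots\}$; this is an $I(S)$ but not a $PI(S)$, since $(-2)\Gamma(-2)\subseteq I$ while $-2\notin I$. Define $\mu_{A}\equiv 0.3$ and $\nu_{A}(x)=0$ for $x\in I$, $\nu_{A}(x)=0.7$ for $x\notin I$. Then $A=(\mu_{A},\nu_{A})$ is an $IFI(S)$, and since $\mu_{A}(S)=\{0.3\}$, every value $\mu_{A}(y)$ is maximal in $\mu_{A}(S)$; hence no $y\in S$ satisfies "$\mu_{A}(y)$ not maximal and $\nu_{A}(y)$ not minimal," and the hypothesis of the converse holds vacuously. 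Yet $\underset{\gamma\in\Gamma}{\sup}\,\nu_{A}\bigl((-2)\gamma(-2)\bigr)=0<0.7=\min\{\nu_{A}(-2),\nu_{A}(-2)\}$, so $A$ is not an $IFPI(S)$. Consequently the disjunctive reading you propose — $<y,A>=A$ whenever $\mu_{A}(y)$ fails to be maximal, and also whenever $\nu_{A}(y)$ fails to be minimal — is not something that can be "argued" from the stated hypothesis; it is a strictly stronger assumption, and it is what the theorem actually requires. Under that reading your per-coordinate case analysis is complete and correct (extremal value gives the squeeze, non-extremal value gives the extension identity, and the two cases exhaust all pairs coordinate-wise), and it would repair the paper's proof as well.
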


\begin{proof}
Let $A=(\mu_{A},\nu_{A})$ be an $IFPI(S)$ and $x\in S$ such that $\mu_{A}(x)=\underset{y\in S}{\inf}$ $\mu_{A}(y)$ and $\nu_{A}(x)=\underset{y\in S}{\sup}$ $\nu_{A}(y)($since each $\mu_{A}(y)$ and each $\nu_{A}(y)\in [0,1],$ a closed and bounded subset of $R,$ so $\underset{y\in S}{\inf}$ $\mu_{A}(y)$ and $\underset{y\in S}{\sup}$ $\nu_{A}(y)$ exist$).$ Let $z\in S.$ Then $\mu_{A}(x)\leq\mu_{A}(z)$ and $\nu_{A}(x)\geq\nu_{A}(z).$ Hence $\max\{\mu_{A}(x),\mu_{A}(z)\}=\mu_{A}(z).......(*)$ and $\min\{\nu_{A}(x),\nu_{A}(z)\}=\nu_{A}(z)......(**).$ Then
$$
\begin{array}{ll}
<x,\mu_{A}>(z)&=\underset{\gamma\in\Gamma}{\inf}\mu_{A}(x\gamma z)\\
&=\max\{\mu_{A}(x),\mu_{A}(z)\}(\text{since }A=(\mu_{A},\nu_{A})\text{ is an }IFPI(S))\\
&=\mu_{A}(z)(\text{ using }(*))
\end{array}
$$
and
$$
\begin{array}{ll}
<x,\nu_{A}>(z)&=\underset{\gamma\in\Gamma}{\sup}\nu_{A}(x\gamma z)\\
&=\min\{\nu_{A}(x),\nu_{A}(z)\}(\text{since }A=(\mu_{A},\nu_{A})\text{ is an }IFPI(S))\\
&=\nu_{A}(z)(\text{ using }(**)).
\end{array}
$$
Consequently, $<X,A>=A.$

Conversely, let $A=(\mu_{A},\nu_{A})$ be an $IFI(S)$ such that $<y,A>=A\forall y\in S$ with $\mu_{A}(y)$ is not maximal in $\mu_{A}(S)$ and $\nu_{A}(y)$ is not minimal in $\nu_{A}(S)$ and let $x_{1},x_{2}\in S.$ Since $A=(\mu_{A},\nu_{A})$ is an $IFI(S),$ so we have $\mu_{A}(x_{1}\gamma x_{2})\geq\max\{\mu_{A}(x_{1}),\mu_{A}(x_{2})\}\forall\gamma\in\Gamma$ and $\nu_{A}(x_{1}\gamma x_{2})\leq\min\{\nu_{A}(x_{1}),\nu_{A}(x_{2})\}\forall\gamma\in\Gamma.$ This leads to
$$
\begin{array}{ll}
\underset{\gamma\in\Gamma}{\inf}\mu_{A}(x_{1}\gamma x_{2})\geq\max\{\mu_{A}(x_{1}),\mu_{A}(x_{2})\}
\end{array}
$$ and
$$
\begin{array}{ll}
\underset{\gamma\in\Gamma}{\sup}\nu_{A}(x_{1}\gamma x_{2})\leq\min\{\nu_{A}(x_{1}),\nu_{A}(x_{2})\}\\
\ \ \ \ \ \ \ \ \ \ \ \ \ \ \ \ \ \ \ \ \ \ \ \ \ \ \ \ \ \ \ \ \ \ \ \ \ \ ...(***).
\end{array}
$$
Now let us consider the following two cases:
$$
\begin{array}{ll}
(i)\text{ Either }\mu_{A}(x_{1})\text{ or }\mu_{A}(x_{2})\text{ is maximal in }\mu_{A}(S)\text{ and either }\nu_{A}(x_{1})\text{ or }\nu_{A}(x_{2})\\
\text{ is minimal in }\nu_{A}(S).
\end{array}
$$
$$
\begin{array}{ll}
(ii)\text{ Neither }\mu_{A}(x_{1})\text{ nor }\mu_{A}(x_{2})\text{ is maximal in }\mu_{A}(S)\text{ and neither }\nu_{A}(x_{1})\text{ nor }\\
\nu_{A}(x_{2})\text{ is minimal in }\nu_{A}(S).
\end{array}
$$
Case $(i):$ Without loss of generality, let $\mu_{A}(x_{1})$ is maximal in $\mu_{A}(S)$ and $\nu_{A}(x_{1})$ is minimal in $\nu_{A}(S).$ Then
$$
\begin{array}{ll}
\underset{\gamma\in\Gamma}{\inf}$ $\mu_{A}(x_{1}\gamma x_{2})\leq\mu_{A}(x_{1})=\max\{\mu_{A}(x_{1}),\mu_{A}(x_{2})\}
\end{array}
$$ and
$$
\begin{array}{ll}
\underset{\gamma\in\Gamma}{\sup}$ $\nu_{A}(x_{1}\gamma x_{2})\geq\nu_{A}(x_{1})=\min\{\nu_{A}(x_{1}),\nu_{A}(x_{2})\}\\
\ \ \ \ \ \ \ \ \ \ \ \ \ \ \ \ \ \ \ \ \ \ \ \ \ \ \ \ \ \ \ \ \ \ \ \ \ \ \ \ \ \ \ \ \ .....(****).
\end{array}
$$
Thus by $(***)$ and $(****)$ we have $\underset{\gamma\in\Gamma}{\inf}$ $\mu_{A}(x_{1}\gamma x_{2})=\max\{\mu_{A}(x_{1}),\mu_{A}(x_{2})\}$ and $\underset{\gamma\in\Gamma}{\sup}$ $\nu_{A}(x_{1}\gamma x_{2})=\min\{\nu_{A}(x_{1}),\nu_{A}(x_{2})\}.$

Case $(ii):$ By hypothesis $<x_{1},\mu_{A}>=<x_{2},\mu_{A}>=\mu_{A}$ and $<x_{1},\nu_{A}>=<x_{2},\nu_{A}>=\nu_{A}.$ Hence $<x_{1},\mu_{A}>(x_{2})=\mu_{A}(x_{2})\Rightarrow\underset{\gamma\in\Gamma}{\inf}$ $\mu_{A}(x_{1}\gamma x_{2})=\mu_{A}(x_{2})=\max\{\mu_{A}(x_{1}),\mu_{A}(x_{2}\}$ and $<x_{1},\nu_{A}>(x_{2})=\nu_{A}(x_{2})\Rightarrow\underset{\gamma\in\Gamma}{\sup}$ $\nu_{A}(x_{1}\gamma x_{2})=\nu_{A}(x_{2})=\min\{\nu_{A}(x_{1}),\nu_{A}(x_{2}\}.$ Hence $A=(\mu_{A},\nu_{A})$ is an $IFPI(S).$

\end{proof}

To end this section and to conclude the paper we deduce the following characterization of a $PI(S)$ which follows as a corollary to the above theorem.

\begin{corollary}
Let $S$ be a $\Gamma$-semigroup, $I$ be an $I(S)$ and $M=(\mu_{I},\mu^{c}_{I})$ where $\mu_{I}$ is the characteristic function of $I.$ Then $I$ is $PI(S)$ if and only if for $x\in S$ with $x\notin I,$ $<x,M>=M.$
\end{corollary}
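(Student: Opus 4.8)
The plan is to strip the intuitionistic-fuzzy clothing off the statement and reduce it to a purely set-theoretic claim about the extension of Definition $5.6$, after which the elementwise primeness criterion of Theorem $2.1$ does all the work. First I would observe that since $\mu_{I}=\chi_{I}$ is a characteristic function, Proposition $5.7$ applies and gives $<x,\chi_{I}>=\chi_{<x,I>}$ and $<x,\chi^{c}_{I}>=\chi^{c}_{<x,I>}$, where $<x,I>=\{y\in S:x\Gamma y\subseteq I\}$ is the set extension of $I$ by $x$. Hence $<x,M>=(\chi_{<x,I>},\chi^{c}_{<x,I>})$ while $M=(\chi_{I},\chi^{c}_{I})$, and comparing the two coordinates shows that $<x,M>=M$ holds if and only if $<x,I>=I$ as subsets of $S$. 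Thus the corollary is equivalent to the statement: $I$ is $PI(S)$ if and only if $<x,I>=I$ for every $x\in S$ with $x\notin I$.

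For the forward implication I would assume $I$ is $PI(S)$ and fix $x\notin I$. The inclusion $I\subseteq <x,I>$ uses only that $I$ is an ideal: if $y\in I$ then $x\gamma y\in S\Gamma I\subseteq I$ for every $\gamma\in\Gamma$, so $x\Gamma y\subseteq I$ and $y\in <x,I>$. For the reverse inclusion I take $y\in <x,I>$, so that $x\Gamma y\subseteq I$; criterion $(3)$ of Theorem $2.1$ (prime case) then forces $x\in I$ or $y\in I$, and since $x\notin I$ we get $y\in I$. Hence $<x,I>=I$. For the converse I would assume $<x,I>=I$ for all $x\notin I$ and verify primeness through criterion $(3)$ of Theorem $2.1$: suppose $x\Gamma y\subseteq I$ with $x\notin I$; then $y\in <x,I>=I$ by hypothesis, so $y\in I$. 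This is exactly the implication $x\Gamma y\subseteq I\Rightarrow x\in I\ \text{or}\ y\in I$, so $I$ is $PI(S)$.

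The only genuinely delicate point is the opening reduction: one must handle the non-membership coordinate $\chi^{c}_{I}$ correctly under the extension and check that the quantifier ``$x\notin I$'' on the fuzzy side matches ``$x\notin I$'' on the set side; everything after that is mechanical. An alternative I have in mind, matching the paper's framing of this as a corollary of Theorem $5.13$, is to note that $\mu_{I}=\chi_{I}$ takes only the values $0$ and $1$, so that for a proper ideal the conditions ``$\mu_{I}(x)=\inf_{y\in S}\mu_{I}(y)$ and $\nu_{I}(x)=\sup_{y\in S}\nu_{I}(y)$'' and ``$\mu_{I}(x)$ not maximal and $\nu_{I}(x)$ not minimal'' both coincide with $x\notin I$. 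Since $M$ is an $IFI(S)$ by Corollary $3.13$ and, when $I$ is prime, an $IFPI(S)$ by Corollary $4.10$, the two halves of Theorem $5.13$ yield both directions, with $I$ recovered as the level cut $U(\mu_{I};1)$ via Theorem $4.7$.
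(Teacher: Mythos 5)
Your proof is correct, and your primary argument takes a genuinely different route from the paper's. The paper proves this corollary entirely inside the intuitionistic fuzzy framework, as an application of Theorem $5.13$: for the forward direction it uses Corollary $4.10$ to see that $M=(\mu_{I},\mu^{c}_{I})$ is an $IFPI(S)$, observes that $x\notin I$ forces $\mu_{I}(x)=0=\underset{y\in S}{\inf}\,\mu_{I}(y)$ and $\mu^{c}_{I}(x)=1=\underset{y\in S}{\sup}\,\mu^{c}_{I}(y)$, and invokes the first half of Theorem $5.13$; for the converse it checks that the ``not maximal/not minimal'' hypothesis of the second half of Theorem $5.13$ reduces to $y\notin I$, concludes that $M$ is an $IFPI(S)$, and passes back to $I$. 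Your main argument instead strips off the fuzzy layer immediately: Proposition $5.7$ converts $<x,M>=M$ into the set identity $<x,I>=I$ (with $<x,I>$ as in Definition $5.6$), after which Theorem $2.1(3)$ disposes of both directions in a few lines, with no appeal to Theorem $5.13$ or to the $IFPI$ machinery at all. What your route buys is economy and transparency, and it also sidesteps a genuine blemish in the paper's converse: the paper cites Corollary $4.10$ to pass from ``$M$ is an $IFPI(S)$'' to ``$I$ is a $PI(S)$,'' but Corollary $4.10$ only gives the opposite implication; what is actually needed is the level-cut result, Theorem $4.7$, applied to $I=U(\mu_{I};1)$. What the paper's route buys is that the corollary then genuinely showcases Theorem $5.13$, which is the point of the section; your alternative sketch at the end reproduces that proof faithfully, and indeed more carefully than the paper, since you invoke Theorem $4.7$ exactly where the paper's citation is off.
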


\begin{proof}
Let $I$ be a $PI(S).$ Then by Corollary $4.10,$ $M=(\mu_{I},\mu^{c}_{I})$ is an $IFPI(S).$ Let $x\in S$ such that $x\notin I,$ then $\mu_{I}(x)=0=\underset{y\in S}{\inf}$ $\mu_{I}(y)$ and $\mu^{c}_{I}(x)=1=\underset{y\in S}{\sup}$ $\mu^{c}_{I}(y).$ Hence by Theorem $5.13,$ $<x,M>=M.$

Conversely, let $<x,M>=M$ for all $x\in S$ with $x\notin I.$ Let $y\in S$ be such that $\mu_{A}(y)$ is not maximal in $\mu_{A}(S)$ and $\nu_{A}(y)$ is not minimal in $\nu_{A}(S).$ Then $\mu_{I}(y)=0$ and $\mu^{c}_{I}(y)=1$ and so $y\notin I.$ So $<y,M>=M.$ So by Theorem $5.13,$ $M=(\mu_{I},\mu^{c}_{I})$ is an $IFPI(S).$ Hence $I$ is a $PI(S)(cf.$ Corollary $4.10).$
\end{proof}


\end{document}